\documentclass{amsart}

\oddsidemargin 6pt \evensidemargin 6pt \marginparwidth 48pt
\marginparsep 10pt
\topmargin -18pt \headheight 12pt \headsep 25pt  \footskip 30pt
\textheight 620pt \textwidth 431pt \columnsep 10pt \columnseprule 0pt

\usepackage[all]{xy}

\theoremstyle{plain}
\newtheorem{thm}{Theorem}[section]
\newtheorem{prop}[thm]{Proposition}
\newtheorem{lemma}[thm]{Lemma}
\newtheorem{cor}[thm]{Corollary}
\newtheorem{question}[thm]{Question}

\theoremstyle{definition}
\newtheorem{dfn}[thm]{Definition}

\newtheorem{hypo}{Hypothesis}

\theoremstyle{remark}
\newtheorem{rem}[thm]{Remark}

\newcommand{\HH}{\mathrm{H}}

\newcommand{\semid}{\unitlength.47cm
 \begin{picture}(.7,.6)
   \put(0,.05){$\times$}
   \put(.47,.04){\line(0,1){.39}}
 \end{picture}}

\begin{document}

\title[Dihedral blocks with two simple modules]{Universal deformation rings and dihedral
blocks with two simple modules}

\author{Frauke M. Bleher}
\address{F.B.: Department of Mathematics\\University of Iowa\\
Iowa City, IA 52242-1419, U.S.A.}
\email{frauke-bleher@uiowa.edu}
\thanks{The first author was supported in part by  
NSF Grant DMS06-51332.}
\author{Giovanna LLosent}
\address{G.L.: Department of Mathematics\\CSU
San Bernardino, CA 92407-2397, U.S.A.}
\email{gllosent@csusb.edu}
\author{Jennifer B. Schaefer}
\address{J.S.: Department of Mathematics and Computer Science\\
Dickinson College\\ Carlisle, PA 17013, U.S.A.}
\email{schaefje@dickinson.edu}
\subjclass[2000]{Primary 20C20; Secondary 20C15, 16G10}
\keywords{Universal deformation rings, dihedral defect groups, special biserial algebras, stable endomorphism rings}

\begin{abstract}
Let $k$ be an algebraically closed field of characteristic $2$, and let $W$ be the ring of infinite Witt 
vectors over $k$. Suppose $G$ is a finite group and  $B$ is a block of $kG$ with a dihedral defect 
group $D$ such that there are precisely two isomorphism classes of simple $B$-modules.
We determine the universal deformation ring $R(G,V)$ for every finitely generated $kG$-module $V$ 
which belongs to $B$ and whose stable endomorphism ring is isomorphic to $k$. 
The description by Erdmann of the quiver and relations of the basic algebra of $B$ is usually only 
determined up to a certain parameter $c$ which is either $0$ or $1$. 
We show that $R(G,V)$ is isomorphic to a subquotient ring of $WD$ for all $V$ as above 
if and only if $c=0$, giving an answer to a question raised by the first author and Chinburg in this case.
Moreover, we prove that $c=0$ if and only if  $B$ is Morita equivalent to a principal 
block.
\end{abstract}

\maketitle


\section{Introduction}
\label{s:intro}

Let $k$ be an algebraically closed field of characteristic $p>0$ and let $W=W(k)$ be the ring of infinite 
Witt vectors over $k$. Let $G$ be a finite group, and suppose $V$ is a finitely generated $kG$-module
whose stable endomorphism ring is isomorphic to $k$. It was shown in \cite{bc} that $V$ has a universal 
deformation ring $R(G,V)$ which is universal with respect to deformations of $V$ over complete local 
commutative Noetherian rings with residue field $k$ (see Section \ref{s:prelim}). 
In \cite{bc} (resp. \cite{bl}), the isomorphism types of $R(G,V)$ were determined for $V$ belonging
to a cyclic block (resp. to a block with Klein four defect groups). In \cite{diloc,3sim}, the rings
$R(G,V)$ were determined for $V$ belonging to various tame blocks with dihedral defect 
groups of order
at least 8 with one or three isomorphism classes of simple modules. 
Using the classification of all
groups with dihedral Sylow $2$-subgroups by Gorenstein and Walter in \cite{gowa}, it follows that 
these blocks include in particular all those blocks that are Morita equivalent to a principal block with
dihedral defect groups and that have one or three isomorphism classes of simple modules. 
By \cite[Thm. 2]{brauer2},
a block of $kG$ with dihedral defect groups has at most three simple modules up to isomorphism.
Hence it remains to study the case of dihedral blocks with precisely two isomorphism classes of simple
modules, and this is the case we consider in the present paper.
The key tools used to determine the universal deformation rings in all the above cases are results
from modular and ordinary representation theory due to Brauer, Erdmann \cite{erd}, 
Linckelmann \cite{linckel,linckel1}, Carlson-Th\'{e}venaz \cite{carl2}, and others.

The main motivation for studying universal deformation rings for finite groups is that this case helps
understand ring theoretic properties of universal deformation rings for profinite groups $\Gamma$.
The latter have become an important tool in number theory, in particular if $\Gamma$ is a
profinite Galois group (see e.g. \cite{cornell}, \cite{wiles,taywiles}, \cite{breuil} and their references).
In \cite{lendesmit}, de Smit and Lenstra showed
that if $\Gamma$ is an arbitrary profinite group and $V$ is a finite
dimensional vector space over $k$ with a continuous $\Gamma$-action which has a universal
deformation ring  $R(\Gamma,V)$, then $R(\Gamma,V)$ is the inverse limit of the universal 
deformation rings $R(G,V)$ when $G$ runs over all finite discrete quotients of $\Gamma$ through 
which the $\Gamma$-action on $V$ factors. Thus to answer questions about the ring structure of 
$R(\Gamma,V)$, it is natural to first consider the case when $\Gamma=G$ is finite.
When determining $R(G,V)$, the main advantage is that one can make use of powerful techniques 
that are not available for arbitrary profinite groups $\Gamma$,
such as decomposition matrices, Auslander-Reiten theory and the Green correspondence.

When studying cyclic blocks in \cite{bc}, the first author and Chinburg raised the following
fundamental question about the structure of universal deformation rings for finite groups.

\begin{question}
\label{qu:main}
Let $B$ be a block of $kG$ with a defect group $D$, and suppose $V$ is a finitely generated 
$kG$-module whose stable endomorphism ring $\underline{\mathrm{End}}_{kG}(V)$
is isomorphic to $k$ such that the unique 
$($up to isomorphism$)$ non-projective indecomposable summand of $V$ belongs to $B$. Is the 
universal deformation ring $R(G,V)$ of $V$ isomorphic to a subquotient ring of the group ring $WD$?
\end{question}

The results in \cite{bl,diloc,3sim,bc,blello} show that Question \ref{qu:main} has a positive answer for all 
the blocks $B$ considered in these papers. In particular, this is true if $B$ has cyclic or Klein four defect 
groups or if $B$ is Morita equivalent to a principal block with dihedral defect groups and one or three 
isomorphism classes of simple modules. Another important question which was raised by Flach 
\cite{flach} is whether universal deformation rings are always complete intersections. One of the first
examples that this is not true in general was given in \cite{bc4.9,bc5}. Namely, if $k$ has 
characteristic $2$, $S_4$ is the symmetric group on $4$ letters and $E$ is the unique (up to 
isomorphism) non-trivial simple $kS_4$-module, then $R(S_4,E)\cong W[t]/(t^2,2t)$. 
Subsequently, it was shown in \cite[Cor. 5.1.2]{3sim} that if $B$ is Morita equivalent to a principal 
block with dihedral defect groups of order at least 8 and with three isomorphism classes of simple 
modules, then there are infinitely many indecomposable $B$-modules $V$ whose stable 
endomorphism ring is isomorphic to $k$ and whose universal deformation ring $R(G,V)$ is not a 
complete intersection.

Suppose now that $k$ has characteristic $2$ and $B$ is a block of $kG$ with dihedral defect groups
of order $2^d\ge 8$ and with precisely two isomorphism classes of simple $B$-modules. In
\cite[Chaps. VI, IX and p. 294--295]{erd}, Erdmann gave a list of all possible quivers and relations 
which determine the basic algebra of such a block $B$ up to isomorphism.
Besides the defect $d$ of $B$, she needed an additional parameter $c\in\{0,1\}$ to describe the 
isomorphism type of the basic algebra of $B$. Given $B$, it is usually difficult to determine 
whether $c=0$ or $c=1$. 
So far, only a few explicit cases are known where $c$ has been 
determined, such as the principal $2$-modular block of the symmetric group $S_4$ 
(see \cite[Cor. V.2.5.1]{erd}), the principal $2$-modular blocks of certain quotients of  the general 
unitary group $\mathrm{GU}_2(\mathbb{F}_q)$ when $q\equiv 3 \mod 4$ (see 
\cite[Sect.  1.5]{erdlater}), or the principal $2$-modular block of the projective general
linear group $\mathrm{PGL}_2(\mathbb{F}_q)$ when $q$ is any odd prime power (see
\cite[Cor. 4]{parameter}). In all these cases, $c$ turns out to be zero.
Moreover, it was proved in \cite[Thm. 2]{parameter} that $c$ is zero for all blocks $B$
for which there exists a central extension $\hat{G}$ of $G$ by a group of order $2$ 
together with a block $\hat{B}$ of $k\hat{G}$ with generalized quaternion defect groups 
such that $B$ is contained in the image of the natural surjection $k\hat{G}\to kG$.
Using the results in \cite{parameter}  together with the classification of all groups with dihedral Sylow
$2$-subgroups by Gorenstein and Walter in \cite{gowa}, we will first prove the following result:

\begin{thm}
\label{thm:extraout}
Suppose $k$ has characteristic $2$.
Let $B$ be a block of $kG$ with dihedral defect groups of order at least $8$ such
that there are precisely two isomorphism classes of simple $B$-modules. Then $B$ is Morita
equivalent to a principal block if and only if the parameter $c$ occurring in Erdmann's description
in \cite{erd} of the quiver and relations of the basic algebra of $B$ is $c=0$.
\end{thm}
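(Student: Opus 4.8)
The plan is to establish the two implications separately: the forward direction by reducing to the case of a principal block and combining the Gorenstein--Walter classification \cite{gowa} with the central--extension criterion \cite[Thm.~2]{parameter}, and the converse by realizing the relevant basic algebra as the principal block of a group $\mathrm{PGL}_2(\mathbb{F}_q)$ and quoting \cite[Cor.~4]{parameter}.

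Suppose first that $B$ is Morita equivalent to a principal block. The defect $d$, the number of isomorphism classes of simple modules, and the representation type of a block are Morita invariants, so by Erdmann's description the pair $(d,c)$ is a Morita invariant among blocks of the kind considered here; hence it suffices to show $c=0$ when $B=B_0(kH)$ for a finite group $H$. Since $B_0(kH)\cong B_0(k[H/O(H)])$ we may assume $O(H)=1$, and since the basic algebra of $B$ is of dihedral type with $d\ge 3$, the group $H$ has a dihedral Sylow $2$-subgroup of order $2^d\ge 8$. By \cite{gowa}, $H$ is then a dihedral $2$-group, or $H\cong A_7$, or $\mathrm{PSL}_2(\mathbb{F}_q)\trianglelefteq H\le \mathrm{P\Gamma L}_2(\mathbb{F}_q)$ for some odd prime power $q$. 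A dihedral $2$-group yields a principal block with one simple module, and one checks, using the $2$-modular decomposition matrix of $A_7$ (equivalently, Erdmann's count of the ordinary and modular irreducible characters in a dihedral block together with the fact that $A_7$ has no $2$-block of defect zero), that the principal $2$-block of $A_7$ has three simple modules; both cases are excluded by the hypothesis that $B$ has two simple modules. In the remaining case, requiring $H$ to have a \emph{dihedral} Sylow $2$-subgroup forces the Sylow $2$-subgroup of the image of $H$ in $\mathrm{Out}(\mathrm{PSL}_2(\mathbb{F}_q))=\langle\delta\rangle\times\langle\phi\rangle$ to lie in the diagonal part $\langle\delta\rangle$; a Clifford-theoretic argument, twisting by the (odd-order) linear characters of $H/\mathrm{PGL}_2(\mathbb{F}_q)$, then shows that the principal $2$-block has exactly two simple modules precisely when $\mathrm{PGL}_2(\mathbb{F}_q)\le H$, i.e. $H=\mathrm{PGL}_2(\mathbb{F}_q).E$ with $E$ of odd order. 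For such $H$, the outer action of $E$ lifts (trivially on the centre) to a double cover $2.\mathrm{PGL}_2(\mathbb{F}_q)$ with generalized quaternion Sylow $2$-subgroups, the one used in the proof of \cite[Cor.~4]{parameter}, producing a central extension $\hat H=(2.\mathrm{PGL}_2(\mathbb{F}_q)).E$ of $H$ by a group of order $2$ with generalized quaternion Sylow $2$-subgroups. Since $B_0(k\hat H)$ then has generalized quaternion defect groups and maps onto $B_0(kH)=B$ under $k\hat H\to kH$, \cite[Thm.~2]{parameter} gives $c=0$.

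For the converse, suppose $c=0$. By Erdmann's classification the basic algebra of $B$ is then determined, up to isomorphism, by the defect $d$ alone. Using Dirichlet's theorem choose an odd prime $q$ with $v_2(q-1)=d-1$ (which is possible for every $d\ge 3$); then $\mathrm{PGL}_2(\mathbb{F}_q)$ has a dihedral Sylow $2$-subgroup of order exactly $2^d$, its principal $2$-block $B_0(k\mathrm{PGL}_2(\mathbb{F}_q))$ has two simple modules, and by \cite[Cor.~4]{parameter} its parameter is $c=0$. Hence $B$ and $B_0(k\mathrm{PGL}_2(\mathbb{F}_q))$ have isomorphic basic algebras, so $B$ is Morita equivalent to this principal block, completing the proof.

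The main obstacle is the group-theoretic bookkeeping in the forward direction: sifting the Gorenstein--Walter list for exactly those $H$ (modulo $O(H)$) whose principal $2$-block has dihedral defect of order at least $8$ and precisely two simple modules — in particular handling the field-automorphism extensions $\mathrm{PGL}_2(\mathbb{F}_q).E$ and verifying that the Sylow $2$-subgroups of their double covers are genuinely generalized quaternion — so that \cite[Thm.~2]{parameter} applies uniformly to all of them. One must also argue the Morita-invariance of $c$ with some care, since $c$ is defined only through Erdmann's presentation of the basic algebra.
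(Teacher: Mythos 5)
Your forward direction follows the paper's own route: reduce to $H/O_{2'}(H)$, run the Gorenstein--Walter trichotomy, discard $A_7$ and the $2$-group case by counting simple modules, show via Clifford theory that two simple modules force $\mathrm{PGL}_2(\mathbb{F}_q)\le H$ with odd cyclic quotient, and then produce a central extension with generalized quaternion Sylow $2$-subgroups inside $\mathrm{SL}_2(\mathbb{F}_{q^2})$ so that \cite[Thm.~2]{parameter} (the paper invokes its Corollary~3) yields $c=0$. That part is sound in outline, and the details you defer --- the explicit construction of $\hat H$ and the verification that its Sylow $2$-subgroups are genuinely generalized quaternion --- are exactly the ones the paper carries out in Claim~4 of the proof of Theorem~\ref{thm:principalblocks}.

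The converse, however, has a genuine gap. You assert that for $c=0$ the basic algebra of $B$ is determined up to isomorphism by the defect $d$ alone. This is false: Erdmann's classification involves \emph{two} parameters, $i\in\{1,2\}$ and $c\in\{0,1\}$, and the algebras $\Lambda_{1,0}$ and $\Lambda_{2,0}$ are non-isomorphic for every $d$ (their quivers already differ, since $Q_2$ has the loop $\eta$ at the vertex $1$ which $Q_1$ lacks, and their decomposition matrices in Figure~\ref{fig:decomp} differ). Your choice of $q$ with $v_2(q-1)=d-1\ge 2$ forces $q\equiv 1\bmod 4$ and hence realizes only one of the two algebras as a principal block; the paper's Section~\ref{s:stablend} identifies $\Lambda_{2,0}$ with the principal block of $\mathrm{PGL}_2(\mathbb{F}_q)$ for $q\equiv 3\bmod 4$, which your construction never produces. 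If the given block $B$ has basic algebra $\Lambda_{2,0}$, your candidate principal block has basic algebra $\Lambda_{1,0}$, the two are not Morita equivalent, and the argument collapses. The fix is exactly what the paper quotes from \cite[Cor.~4]{parameter}: for each $d\ge 3$ and \emph{each} $i\in\{1,2\}$ there is a group with dihedral Sylow $2$-subgroups of order $2^d$ whose principal block is Morita equivalent to $\Lambda_{i,0}$ --- concretely, one takes $q\equiv 1\bmod 4$ with $v_2(q-1)=d-1$ for one value of $i$ and $q\equiv 3\bmod 4$ with $v_2(q+1)=d-1$ for the other. The same oversight surfaces, harmlessly, in your forward direction when you call ``the pair $(d,c)$'' a Morita invariant: what is actually invariant is the triple $(d,i,c)$, because the basic algebra is a Morita invariant and the $\Lambda_{i,c}$ are pairwise non-isomorphic.
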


At this point, it is still open whether the parameter $c$ can take the value $c=1$ for non-principal 
blocks $B$.

Let $\Omega$ be the syzygy functor, which defines a self-equivalence on the stable module category
of $kG$. Recall that if $V_0$ is a finitely generated indecomposable non-projective $kG$-module, 
then $\Omega(V_0)$ is defined to be the kernel of a projective cover $P_{V_0}\to V_0$. We say 
$V_0$ has $\Omega$-period $\ell$ if $\ell$ is the smallest positive integer with 
$\Omega^\ell(V_0)\cong V_0$.

We will determine the universal deformation rings of all finitely generated $kG$-modules which
belong to an arbitrary block $B$ of $kG$ with dihedral defect groups and precisely two isomorphism
classes of simple modules.
A summary of our main results is as follows. The precise 
statements can be found in Propositions \ref{prop:stablend}, \ref{prop:udrc0}, \ref{prop:3tube}, 
\ref{prop:omegaorbit} and  \ref{prop:onetubes}. 

\begin{thm}
\label{thm:bigmain}
Suppose $k$ has characteristic $2$.
Let $B$ be a block of $kG$ with a dihedral defect group $D$ of order $2^d$ where $d\ge 3$, such
that there are precisely two isomorphism classes of simple $B$-modules. 
Let $c$ be the parameter occurring in Erdmann's description in \cite{erd} of the quiver and relations 
of the basic algebra of $B$.
Let $V$ be a finitely generated $B$-module whose stable endomorphism ring  
is isomorphic to $k$ and whose universal deformation ring is $R(G,V)$. Let $V_0$ be the unique 
$($up to isomorphism$)$ non-projective indecomposable direct summand of $V$.
Let $\mathfrak{C}$ be the component of the stable Auslander-Reiten quiver of $B$
to which $V_0$ belongs. Then one of the following $4$ mutually exclusive cases occurs:
\begin{enumerate}
\item[(i)] The module $V_0$ is not $\Omega$-periodic, the stable endomorphism ring of every module in
$\mathfrak{C}$ is isomorphic to $k$ and $R(G,V)$ is isomorphic to a $W$-subalgebra of 
$W[\mathbb{Z}/2]$. Moreover, if $B$ is a principal block, then $R(G,V)\cong W[\mathbb{Z}/2]$.
\item[(ii)] The module $V_0$ is not $\Omega$-periodic, the only modules in $\mathfrak{C}$ whose stable
endomorphism rings are isomorphic to $k$ are the modules in the $\Omega$-orbit of $V_0$ 
and $R(G,V)\cong W[[t]]/(t\, p_d(t),2\, p_d(t))$ for a certain monic polynomial 
$p_d(t)\in W[t]$ of degree $2^{d-2}-1$ whose non-leading coefficients are all divisible by $2$.
\item[(iii)] The module $V_0$ has $\Omega$-period $3$, the only modules in $\mathfrak{C}$ whose stable
endomorphism rings are isomorphic to $k$ are the modules at the boundary of $\mathfrak{C}$
and $R(G,V)\cong k$.
\item[(iv)] The module $V_0$ has $\Omega$-period $2$, the only module in 
$\mathfrak{C}$ whose stable endomorphism ring is isomorphic to $k$ is the module at the 
boundary of $\mathfrak{C}$ and $R(G,V)\cong W[[t]]/(2\, f_{V_0}(t))$ for a certain power series 
$f_{V_0}(t)\in W[[t]]$.
\end{enumerate}
In cases $(i) - (iii)$, $R(G,V)$ is isomorphic to a subquotient ring of $WD$. 
In case $(ii)$, $R(G,V)$ is not a complete intersection.
Cases $(i) - (iii)$ occur for both $c=0$ and $c=0$, whereas case $(iv)$ occurs if and only if $c=1$.
If case $(iv)$ occurs, then there are infinitely many indecomposable $B$-modules 
with $\Omega$-period $2$ whose stable endomorphism rings are isomorphic to $k$.
\end{thm}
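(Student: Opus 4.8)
The plan is to deduce Theorem~\ref{thm:bigmain} by combining a case-by-case determination of $R(G,V)$ for every module $V_0$ in the stable Auslander--Reiten quiver of $B$ with $\underline{\mathrm{End}}_{kG}(V_0)\cong k$; these computations form the content of Propositions~\ref{prop:stablend}, \ref{prop:udrc0}, \ref{prop:3tube}, \ref{prop:omegaorbit} and~\ref{prop:onetubes}, and the present theorem is their combination together with Theorem~\ref{thm:extraout}. First I would reduce to the basic algebra: by~\cite{bc} universal deformation rings are invariant under Morita equivalence, depend on $V$ only through its unique non-projective summand $V_0$, and satisfy $R(G,V_0)\cong R(G,\Omega V_0)$ once $\underline{\mathrm{End}}_{kG}(V_0)\cong k$, so it suffices to work inside the basic algebra $\Lambda$ of $B$, which by~\cite{erd} is one of an explicit family of special biserial algebras determined by the defect $d$ and the parameter $c\in\{0,1\}$. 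Using the string/band classification of the indecomposable $\Lambda$-modules I would describe the stable Auslander--Reiten quiver: up to $\Omega$ its non-periodic part consists of components of type $\mathbb{Z}A_\infty^\infty$ and its periodic part of tubes of rank $1$ and $3$. The combinatorics of morphisms between string modules then isolates exactly which string modules $M(C)$ satisfy $\underline{\mathrm{End}}_{kG}(M(C))\cong k$, and which boundary band modules $M(\beta,\lambda)$ of a $1$- or $3$-tube do; non-boundary tube modules have larger stable endomorphism rings and are discarded at once. This fixes, for each relevant $V_0$, which of the four cases it falls into, and mutual exclusivity is then immediate from the trichotomy non-periodic / period $3$ / period $2$ and, within the non-periodic case, whether all of $\mathfrak{C}$ or only the $\Omega$-orbit of $V_0$ has stable endomorphism ring $k$.

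For the non-periodic components I would treat cases~(i) and~(ii). In case~(i) every module of $\mathfrak{C}$ has stable endomorphism ring $k$; then $\mathrm{Ext}^1_{kG}(V_0,V_0)$ is one-dimensional, so $R(G,V)$ is a quotient of $W[[t]]$, and a lifting argument (lifting $V_0$ to a $W$-free module and pinning down $R(G,V)/2R(G,V)$) shows $R(G,V)$ is a $W$-subalgebra of $W[\mathbb{Z}/2]$; when $B$ is a principal block the defect group itself supplies a deformation realizing all of $W[\mathbb{Z}/2]$. In case~(ii) only the $\Omega$-orbit of $V_0$ in $\mathfrak{C}$ has stable endomorphism ring $k$, and here one must compute $R(G,V)$ explicitly. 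I would pin down the string $C$ with $M(C)\cong V_0$ (after applying a power of $\Omega$), verify $\underline{\mathrm{End}}_{kG}(V_0)\cong k$ and $\dim_k\mathrm{Ext}^1_{kG}(V_0,V_0)=1$, construct a deformation of $V_0$ over $W[[t]]/(t\,p_d(t),2\,p_d(t))$ by an explicit iterated self-extension whose length is governed by $d$ — this is where the monic polynomial $p_d(t)$ of degree $2^{d-2}-1$ with all non-leading coefficients divisible by $2$ enters — and then show by obstruction calculus that no further relations hold, so this deformation is universal. That $R(G,V)$ is not a complete intersection follows since $t\,p_d(t)$ and $2\,p_d(t)$ share the factor $p_d(t)$ and hence cannot form a regular sequence, a short commutative-algebra argument as in~\cite{3sim} ruling out any presentation of this ring on one generator that is a complete intersection.

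For the tubes I would argue as follows. At the boundary of a $3$-tube, $V_0$ has $\Omega$-period $3$, and using $\Omega^3V_0\cong V_0$ together with a parity/dimension count for the relevant Green correspondents, or the results of Linckelmann and Carlson--Th\'{e}venaz recalled in the introduction, one obtains $\mathrm{Ext}^1_{kG}(V_0,V_0)=0$; hence $R(G,V)\cong k$, which is case~(iii). At the boundary of a $1$-tube, $V_0$ has $\Omega$-period $2$, $\mathrm{Ext}^1_{kG}(V_0,V_0)$ is one-dimensional, and analysing the deformations of the corresponding band module shows that the only relation is the one forcing $2\,f_{V_0}(t)=0$ for a power series $f_{V_0}(t)\in W[[t]]$ depending on $V_0$, which is case~(iv); the special fibre $k[[t]]$ reflects the one-parameter family of modules in the tube. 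For the ``subquotient of $WD$'' assertion in~(i)--(iii): $k$ and $W[\mathbb{Z}/2]$ are respectively a quotient of $W$ and a quotient of $WD$ via $D\twoheadrightarrow D^{\mathrm{ab}}\cong(\mathbb{Z}/2)^2$, while $W[[t]]/(t\,p_d(t),2\,p_d(t))$ I would realize as a subquotient of $WC$ for the cyclic subgroup $C\le D$ of index $2$, by the same computation as in~\cite{diloc,3sim}.

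Finally, the dependence on $c$: the number and shape of the bands of $\Lambda$, and hence of the rank-$1$ tubes, is read off directly from Erdmann's relations and depends on $c$. When $c=0$ I would verify that no rank-$1$ tube has a boundary module with stable endomorphism ring $k$, so case~(iv) does not occur, while explicit short strings and bands realize cases~(i), (ii) and~(iii). When $c=1$ I would exhibit a band $\beta$ for which the boundary band modules $M(\beta,\lambda)$ have $\Omega$-period $2$ and stable endomorphism ring $k$ for infinitely many scalars $\lambda$ (equivalently, for an infinite family of bands), establishing that case~(iv) occurs and that it then does so for infinitely many indecomposable $B$-modules, and checking that cases~(i)--(iii) still occur. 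This gives ``case~(iv) $\iff c=1$'', and combined with Theorem~\ref{thm:extraout} it also accounts for the principal-block refinement in case~(i). The main obstacle is the explicit determination of $R(G,V)$ in case~(ii) — both producing the deformation over $W[[t]]/(t\,p_d(t),2\,p_d(t))$ with the correct polynomial $p_d(t)$ and proving, by a delicate argument on long string modules, that there are no further relations; a secondary difficulty is the bookkeeping that identifies precisely which bands survive for each value of $c$ and shows the period-$2$ family is infinite.
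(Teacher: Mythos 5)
Your overall architecture is the same as the paper's: Theorem \ref{thm:bigmain} really is assembled from Propositions \ref{prop:stablend}, \ref{prop:udrc0}, \ref{prop:3tube}, \ref{prop:omegaorbit} and \ref{prop:onetubes} exactly as you describe, and your plans for case (i), case (iv), the subquotient-of-$WD$ claims and the dependence on $c$ are in the right spirit. There is, however, a genuine gap in your case (iii). From $\mathrm{Ext}^1_{kG}(V_0,V_0)=0$ you conclude ``hence $R(G,V)\cong k$,'' but vanishing of the tangent space only forces $\mathfrak{m}_{R}=2R$, i.e.\ $R(G,V)$ is a quotient of $W$; it could a priori be $W$ or $W/2^m$ for any $m$. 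One must still show that $V_0$ admits no lift over $W/4$. The paper does this by passing to the Green correspondent of $V_0$ in a block $b$ of $kN_G(K)$ (with $K$ a Klein four vertex), which is Morita equivalent to $kS_4$ modulo the socle, and importing the explicit non-liftability computation of \cite[Sect.~5.2]{3sim}; nothing in your sketch supplies this step.

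This gap propagates into your case (ii), where the decisive input is not an ``obstruction calculus showing no further relations'' but rather the establishment of an \emph{extra} relation. After producing the surjection $R(G,V)\to W[[t]]/(p_d(t))$ from a $W$-lattice with character $\sum_j\chi_{5,j}$ and computing $R(G,V)/2R(G,V)\cong k[t]/(t^{2^{d-2}})$, one knows only that $R(G,V)\cong W[[t]]/(p_d(t)(t-2\mu),\,a\,2^m p_d(t))$ with $a\in\{0,1\}$, $m>0$; the content is to force $a=1$ and $m=1$. The paper's argument is that a $W$-flat (or $W/2^m$-flat, $m\ge 2$) answer would yield a lift over $W$ (or $W/2^m$) of the universal mod-$2$ deformation $\overline{U}\cong\Omega^{-1}(T_{001})$, which sits at the boundary of the $3$-tube and therefore has $R(G,\overline{U})\cong k$ by case (iii). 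So the full strength of case (iii) — not merely the vanishing of $\mathrm{Ext}^1$ — is what pins down the ring in case (ii), and both steps of your proposal need to be repaired together.
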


Using Theorem \ref{thm:extraout}, we obtain the following consequence of Theorem 
\ref{thm:bigmain}.

\begin{cor}
\label{cor:littlecor}
Let $B$ and $c$ be as in Theorem $\ref{thm:bigmain}$. Then the following statements are equivalent:
\begin{enumerate}
\item[(i)] The block $B$ is Morita equivalent to a principal block.
\item[(ii)] The parameter $c$ is zero.
\item[(iii)] Case $($iv$)$ of Theorem $\ref{thm:bigmain}$ does not occur for $B$.
\item[(iv)] Question $\ref{qu:main}$ has a positive answer for $B$. 
\end{enumerate}
\end{cor}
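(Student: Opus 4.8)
The plan is to derive Corollary~\ref{cor:littlecor} as a formal consequence of Theorem~\ref{thm:extraout} and Theorem~\ref{thm:bigmain}, so the argument is essentially a chase through the implications. First I would establish the equivalence (i)$\iff$(ii): this is exactly the content of Theorem~\ref{thm:extraout}, since the hypotheses on $B$ (dihedral defect groups of order $2^d$ with $d\ge 3$, precisely two isomorphism classes of simple $B$-modules) match those of that theorem verbatim. Next I would prove (ii)$\iff$(iii): by the last two sentences of Theorem~\ref{thm:bigmain}, cases (i)--(iii) occur for both values of $c$ while case (iv) occurs \emph{if and only if} $c=1$; hence ``case (iv) does not occur'' is equivalent to $c=0$. (One should note in passing that every finitely generated $B$-module $V$ with $\underline{\mathrm{End}}_{kG}(V)\cong k$ falls into exactly one of the four cases, by the ``$4$ mutually exclusive cases'' assertion, so ``case (iv) does not occur for $B$'' is an unambiguous statement about the block.)

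The remaining task is to fold statement (iv) into the chain, i.e.\ to show that Question~\ref{qu:main} has a positive answer for $B$ if and only if $c=0$. For the direction $c=0\Rightarrow$(iv): if $c=0$ then by the equivalence just proved only cases (i)--(iii) of Theorem~\ref{thm:bigmain} occur, and Theorem~\ref{thm:bigmain} explicitly asserts that in cases (i)--(iii) the ring $R(G,V)$ is isomorphic to a subquotient ring of $WD$; hence Question~\ref{qu:main} has a positive answer for every $V$ belonging to $B$ with stable endomorphism ring $k$, which is precisely what a positive answer for $B$ means. For the converse, I would argue contrapositively: if $c=1$, then by Theorem~\ref{thm:bigmain} case (iv) occurs for some $B$-module $V_0$ of $\Omega$-period $2$, and $R(G,V_0)\cong W[[t]]/(2\,f_{V_0}(t))$. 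To finish one needs to know that this ring is \emph{not} isomorphic to any subquotient ring of $WD$ where $D$ is dihedral of order $2^d$.

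The main obstacle is exactly this last point: showing $W[[t]]/(2\,f_{V_0}(t))$ is not a subquotient ring of $WD$. I expect this to require understanding the power series $f_{V_0}(t)$ well enough—presumably from the detailed computation in Proposition~\ref{prop:onetubes}—to see that $R(G,V_0)$ has (Krull) dimension $1$ and a $2$-torsion element $t\cdot(\text{unit})$ giving a quotient that cannot be matched inside the (known, finite) list of subquotient rings of the commutative-in-the-relevant-sense data coming from $WD$; more likely the cleanest route is a rank or length argument, e.g.\ that any subquotient ring of $WD$ that is free of infinite rank over $W$ after killing torsion must satisfy a constraint that $W[[t]]/(2\,f_{V_0}(t))$ violates, or a direct structural obstruction coming from the fact that $WD$ for dihedral $D$ has a well-understood decomposition. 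I would isolate this as a separate lemma, proved using the explicit shape of $f_{V_0}(t)$ established earlier, and then the corollary follows by assembling (i)$\iff$(ii)$\iff$(iii) and (ii)$\iff$(iv) as above.
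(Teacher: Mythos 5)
Your reduction of the corollary to Theorems \ref{thm:extraout} and \ref{thm:bigmain} is exactly how the paper treats it (the paper offers no separate proof, regarding the corollary as an immediate consequence of those two theorems), and your chains (i)$\iff$(ii)$\iff$(iii) and $c=0\Rightarrow$(iv) are handled correctly. The one substantive step is the one you flag as ``the main obstacle'': showing that when $c=1$ the ring $R(G,V)\cong W[[t]]/(2\,f_{V}(t))$ arising in case (iv) is not a subquotient ring of $WD$. Your proposed routes to this point are more complicated than necessary and, in one respect, not available: you suggest using ``the explicit shape of $f_{V_0}(t)$ established earlier,'' but Proposition \ref{prop:onetubes} deliberately does not determine $f_V(t)$ --- it only asserts the existence of some power series.

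Fortunately, no information about $f_V(t)$ is needed. The proof of Proposition \ref{prop:onetubes} establishes that $R(G,V)/2R(G,V)\cong k[[t]]$, which is infinite-dimensional over $k$. On the other hand, $WD$ is a free $W$-module of rank $2^d$, so every subquotient ring of $WD$ is a finitely generated $W$-module (a subquotient of a Noetherian module is finitely generated), and hence its reduction modulo $2$ is finite-dimensional over $k$. Therefore $W[[t]]/(2\,f_V(t))$ cannot be a subquotient ring of $WD$. Since Theorem \ref{thm:bigmain} guarantees that case (iv) actually occurs (indeed for infinitely many indecomposable $V$) whenever $c=1$, statement (iv) of the corollary fails when $c=1$, which closes the gap and completes (ii)$\iff$(iv). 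With this one-line replacement for your proposed lemma, the argument is complete and coincides with the paper's intended proof.
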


Corollary \ref{cor:littlecor} together with the results in \cite{bl,diloc,3sim} lead to the following answer to Question \ref{qu:main}
for principal blocks with dihedral defect groups. 
Note that all blocks with Klein four defect groups are Morita equivalent
to principal blocks (see e.g. \cite[\S 6.6]{ben}).

\begin{thm}
\label{cor:main}
Suppose $k$ has characteristic $2$, and suppose $B$ is a block of $kG$ with dihedral defect groups of
order at least $4$ which is Morita equivalent to a principal block. Then Question $\ref{qu:main}$
has a positive answer for $B$.
\end{thm}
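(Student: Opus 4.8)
The plan is to reduce \emph{Theorem \ref{cor:main}} to results already in the literature together with Corollary \ref{cor:littlecor}, organizing the argument by the order of the defect group and the number of isomorphism classes of simple modules of $B$. Write $D$ for the dihedral defect group of $B$ and $|D|=2^d$ with $d\ge 2$.

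First I would dispose of the case $d=2$, i.e. $|D|=4$, so that $D$ is a Klein four group. Every block with Klein four defect groups is Morita equivalent to a principal block (see e.g. \cite[\S 6.6]{ben}), and the isomorphism type of $R(G,V)$ was determined in \cite{bl} for every finitely generated $kG$-module $V$ belonging to such a block with $\underline{\mathrm{End}}_{kG}(V)\cong k$; in each case $R(G,V)$ turns out to be isomorphic to a subquotient ring of $WD$, so Question \ref{qu:main} has a positive answer for $B$. Next suppose $d\ge 3$. By \cite[Thm. 2]{brauer2}, $B$ has either one, two or three isomorphism classes of simple modules. If $B$ has one or three such classes, then, using the Gorenstein--Walter classification of finite groups with dihedral Sylow $2$-subgroups \cite{gowa}, $B$ is Morita equivalent to one of the principal blocks treated in \cite{diloc} (one simple module) or in \cite{3sim} (three simple modules), where it is shown that $R(G,V)$ is isomorphic to a subquotient ring of $WD$ for all relevant $V$; hence Question \ref{qu:main} has a positive answer for $B$. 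Finally, if $B$ has precisely two isomorphism classes of simple modules, then $B$ satisfies the hypotheses of Theorem \ref{thm:bigmain}, and since $B$ is Morita equivalent to a principal block by assumption, the equivalence of statements (i) and (iv) in Corollary \ref{cor:littlecor} gives that Question \ref{qu:main} has a positive answer for $B$. As these cases are mutually exhaustive, this proves the theorem.

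The only point requiring care is the bookkeeping in the case of one or three simple modules: one must verify that the list of principal (or Morita-equivalent-to-principal) dihedral blocks of order at least $8$ produced by \cite{gowa} is genuinely covered by the families analyzed in \cite{diloc,3sim}, and that the property of being isomorphic to a subquotient ring of $WD$ is unaffected by replacing $B$ with a Morita equivalent principal block --- which holds because the deformation functor depends only on the module category of $B$ and $WD$ depends only on the isomorphism type of $D$. With these observations the proof is an assembly of the quoted results, and presents no obstacle beyond those already surmounted in Theorems \ref{thm:extraout} and \ref{thm:bigmain}.
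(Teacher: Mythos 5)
Your proposal is correct and follows essentially the same route as the paper, which derives this theorem by combining Corollary \ref{cor:littlecor} (for the two-simple-module case) with the results of \cite{bl} for Klein four defect groups and of \cite{diloc,3sim} for dihedral blocks with one or three simple modules, the latter being matched to Morita classes of principal blocks via the Gorenstein--Walter classification. Your case division and the closing remarks on Morita invariance are consistent with the paper's (terser) argument.
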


The paper is organized as follows. In Section  \ref{s:prelim}, we review the basic definitions and results
concerning universal deformation rings of modules for finite groups. We then let $B$ be a 
$2$-modular block with dihedral defect groups such that there are precisely two isomorphism 
classes of simple $B$-modules. In Section  \ref{s:dihedralblocks}, we 
provide the quiver and relations for the basic algebra of $B$ as given in \cite{erd},
and we describe the ordinary characters
belonging to $B$ as given in \cite{brauer2}. 
In Section \ref{s:principal}, we prove Theorem \ref{thm:extraout}.
In Section \ref{s:stablend}, we determine all finitely generated
$B$-modules whose stable endomorphism rings are isomorphic to $k$. In Section \ref{s:udr}, we prove 
Theorem \ref{thm:bigmain}.
In Section \ref{s:stringband}, we provide some background on the representation theory of the
basic algebra of $B$.


\section{Preliminaries}
\label{s:prelim}

Let $k$ be an algebraically closed field of characteristic $p>0$, let $W$ be the ring of infinite Witt 
vectors over $k$ and let $F$ be the fraction field of $W$. Let ${\mathcal{C}}$ be the category of 
all complete local commutative Noetherian rings with residue field $k$. The morphisms in 
${\mathcal{C}}$ are continuous $W$-algebra homomorphisms which induce the identity map on $k$. 

Suppose $G$ is a finite group and $V$ is a finitely generated $kG$-module. 
A lift of $V$ over an object $R$ in ${\mathcal{C}}$ is a finitely generated $RG$-module $M$ which
is free over $R$ together with a $kG$-module isomorphism $\phi: k\otimes_R M\to V$. 
We denote such a lift by $(M,\phi)$. Two lifts $(M,\phi)$ and $(M',\phi')$ of $V$ over $R$ are said to be 
isomorphic if there is an $RG$-module isomorphism $f:M\to M'$ such that $\phi'\circ(k\otimes_Rf)=
\phi$. The isomorphism class $[M,\phi]$ of a lift $(M,\phi)$ of $V$ 
over $R$ is called a deformation of $V$ over $R$, and the set of such deformations is denoted by 
$\mathrm{Def}_G(V,R)$. The deformation functor ${F}_V:{\mathcal{C}} \to \mathrm{Sets}$
is defined to be the covariant functor which sends an object $R$ in ${\mathcal{C}}$ to 
$\mathrm{Def}_G(V,R)$ and a morphism $\alpha:R\to R'$ in $\mathcal{C}$ to the set map
$F_V(\alpha):\mathrm{Def}_G(V,R)\to \mathrm{Def}_G(V,R')$ such that $F_V([M,\phi]) = 
[R'\otimes_{R,\alpha}M,\phi_\alpha]$ where $\phi_\alpha$ is the composition
$k\otimes_{R'}(R'\otimes_{R,\alpha}M)\cong k\otimes_R M \xrightarrow{\phi} V$.

In case there exists an object $R(G,V)$ in ${\mathcal{C}}$ and a lift $(U(G,V),\phi_U)$ of $V$ over 
$R(G,V)$ such that for each $R$ in ${\mathcal{C}}$ and for each lift $(M,\phi)$ of $V$ over $R$ 
there exists a unique morphism $\alpha:R(G,V)\to R$ in ${\mathcal{C}}$ such that 
$[M,\phi]=F_V(\alpha)([U(G,V),\phi_U])$, then $R(G,V)$ is called the universal deformation ring of 
$V$ and the deformation $[U(G,V),\phi_U]$ is called the universal deformation of $V$. 
In other words, $R(G,V)$ represents the functor ${F}_V$ in the sense that ${F}_V$ is naturally 
isomorphic to $\mathrm{Hom}_{{\mathcal{C}}}(R(G,V),-)$. For more information on deformation 
rings see \cite{lendesmit} and \cite{maz1}.

\begin{rem}
\label{rem:deformations}
The above definition of deformations differs from the definition used in \cite{bl,bc}.
Namely, given a lift $(M,\phi)$ of $V$ over an object $R$ in $\mathcal{C}$, in \cite{bl,bc}
the isomorphism class of $M$ as an $RG$-module was called a 
deformation of $V$ over $R$, without taking into account the specific isomorphism 
$\phi:k\otimes_RM\to V$. 
In general, a deformation of $V$ over $R$ according to the latter definition
identifies more lifts than a deformation of $V$ over $R$
that respects the isomorphism $\phi$ of a representative $(M,\phi)$.
However, if the stable
endomorphism ring $\underline{\mathrm{End}}_{kG}(V)$ is isomorphic to $k$, these two definitions
of deformations coincide (see e.g. \cite[Remark 2.1]{quaternion}). 
\end{rem}

The following result was proved in \cite{bc}.
As above, $\Omega$ denotes the syzygy functor.

\begin{prop}
\label{prop:stablendudr}
{\rm \cite[Prop. 2.1 and Cors. 2.5 and 2.8]{bc}}
Suppose $V$ is a finitely generated $kG$-module whose stable endomorphism ring 
$\underline{\mathrm{End}}_{kG}(V)$ is isomorphic to $k$.  Then
$V$ has  a universal deformation ring $R(G,V)$. Moreover:
\begin{itemize}
\item[(i)] If $V_0$ is the unique $($up to isomophism$)$ non-projective indecomposable summand 
of $V$, then $\underline{\mathrm{End}}_{kG}(V_0)\cong k$ and $R(G,V)\cong R(G,V_0)$.
\item[(ii)] We have
$\underline{\mathrm{End}}_{kG}(\Omega(V))\cong k$ and $R(G,V)\cong R(G,\Omega(V))$.
\end{itemize}
\end{prop}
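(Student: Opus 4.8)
The plan is to obtain the existence of $R(G,V)$ from Schlessinger's representability criteria, and then to deduce (i) and (ii) by transporting the deformation functor along the evident equivalences. For the existence I would verify the hypotheses of Schlessinger's theorem for the functor $F_V\colon\mathcal{C}\to\mathrm{Sets}$. One has $F_V(k)=\{*\}$, since $(V,\mathrm{id}_V)$ is, up to isomorphism, the only lift of $V$ over $k$; the tangent space $F_V(k[\epsilon]/(\epsilon^2))$ is canonically identified with $\mathrm{Ext}^1_{kG}(V,V)$, which is finite dimensional over $k$, giving Schlessinger's condition (H3); and conditions (H1) and (H2) — the relevant (bi)jectivity of $F_V(R'\times_R R'')\to F_V(R')\times_{F_V(R)}F_V(R'')$ for a small extension $R''\to R$, respectively for $R=k$ and $R''=k[\epsilon]/(\epsilon^2)$ — are formal, because an $(R'\times_R R'')G$-module that is free over $R'\times_R R''$ is the same datum as a pair of modules over $R'G$ and over $R''G$, free over $R'$ and over $R''$ respectively, glued along an isomorphism of their reductions to $RG$, and reduction is compatible with the chosen trivializations $\phi$. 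Together with the continuity of $F_V$, this shows that $F_V$ has a versal hull, and I would let $R(G,V)\in\mathcal{C}$ be the corresponding versal deformation ring.

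The substantive point is to upgrade this versal hull to a genuine universal deformation ring, i.e.\ to prove that $F_V$ is pro-representable; by Schlessinger this amounts to his condition (H4), the bijectivity of $F_V(R'\times_R R')\to F_V(R')\times_{F_V(R)}F_V(R')$ for every small extension $R'\to R$. The only obstruction is that an automorphism of a deformation $[M,\phi]$ of $V$ over $R$ reducing to $\mathrm{id}_V$ need not lift to an automorphism of a lift of $[M,\phi]$ over $R'$; equivalently, as recorded in Remark~\ref{rem:deformations}, this obstruction reflects the difference between the present notion of deformation and the coarser one of \cite{bl,bc}. This is where the hypothesis $\underline{\mathrm{End}}_{kG}(V)\cong k$ enters: every $kG$-endomorphism of $V$ is a scalar plus an endomorphism factoring through a projective module, and endomorphisms factoring through projectives act trivially both on $\mathrm{Ext}^1_{kG}(V,V)$ and on the set of lifts of a given deformation, so the relevant automorphism groups act trivially, the two notions of deformation coincide, and (H4) holds. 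I expect this step — isolating which automorphisms act trivially on lifts, and seeing that $\underline{\mathrm{End}}_{kG}(V)\cong k$ is precisely the hypothesis making all of them do so — to be the main obstacle of the argument.

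For part (i), first observe that $V$ is non-projective (otherwise $\underline{\mathrm{End}}_{kG}(V)=0$), and that $\underline{\mathrm{End}}_{kG}(V)\cong k$ forces its non-projective part $V_0$ to be indecomposable and to occur with multiplicity one, since a field contains no nontrivial idempotents whereas $\underline{\mathrm{End}}_{kG}(V_0^{\oplus 2})$ would be a $2\times 2$ matrix ring over $\underline{\mathrm{End}}_{kG}(V_0)$; write $V\cong V_0\oplus Q$ with $Q$ projective. Over any $R$ in $\mathcal{C}$, every projective $kG$-module lifts to a projective $RG$-module, uniquely up to isomorphism, and by lifting idempotents in $\mathrm{End}_{RG}(M)$ any lift $M$ of $V$ over $R$ decomposes compatibly as (a lift of $V_0$) $\oplus$ (the lift of $Q$). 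This produces a natural isomorphism $F_V\cong F_{V_0}$, hence $R(G,V)\cong R(G,V_0)$; and $\underline{\mathrm{End}}_{kG}(V_0)\cong\underline{\mathrm{End}}_{kG}(V)\cong k$ because projective summands do not affect the stable endomorphism ring.

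For part (ii), since $\Omega$ is a self-equivalence of $\underline{\mathrm{mod}}\,kG$ we get $\underline{\mathrm{End}}_{kG}(\Omega(V))\cong k$ at once. To compare the deformation rings I would, for each lift $(M,\phi)$ of $V$ over $R$, choose a projective $RG$-cover $\widetilde{P}\to M$ and set $\Omega_R(M)=\ker(\widetilde{P}\to M)$: then $\Omega_R(M)$ is free over $R$, being an $R$-direct summand of the $R$-free module $\widetilde{P}$ since $M$ is $R$-free, and applying $k\otimes_R-$ to $0\to\Omega_R(M)\to\widetilde{P}\to M\to 0$ — using $\mathrm{Tor}_1^R(k,M)=0$ together with the fact that $\widetilde{P}/\mathfrak{m}_R\widetilde{P}\to V$ is again a projective cover — identifies $k\otimes_R\Omega_R(M)$ with $\Omega(V)$. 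Since projective covers are unique up to isomorphism, $\Omega_R$ is well defined on isomorphism classes and functorial in $R$, and it is inverted, up to projective direct summands which are irrelevant for deformations by part (i), by the cosyzygy construction (using that $RG$ is a symmetric $R$-algebra). Hence $F_V\cong F_{\Omega(V)}$ as functors on $\mathcal{C}$, and therefore $R(G,V)\cong R(G,\Omega(V))$.
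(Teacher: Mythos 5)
The paper does not prove this proposition at all: it is imported verbatim from \cite[Prop.~2.1 and Cors.~2.5 and 2.8]{bc}, and your argument is essentially a reconstruction of the proofs given there — Schlessinger's criteria together with the observation that the automorphisms one must lift along small extensions differ from scalars by maps factoring through projectives (which always lift) for existence and (H4), idempotent lifting to split off the projective part for (i), and the kernel of a projective $RG$-cover as a relative syzygy, inverted by the cosyzygy, for (ii). Your sketch is correct and takes the same route as the cited source.
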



\section{Dihedral blocks with two simple modules}
\label{s:dihedralblocks}

In \cite{brauer2}, Brauer  proved that a $2$-modular
block with dihedral defect groups contains at most three 
simple modules up to isomorphism. 
For the remainder of the paper, we make the following assumptions.

\begin{hypo}
\label{hyp:throughout}
Let $k$ be an algebraically closed field of characteristic $2$, let $W$ be the ring of infinite
Witt vectors over $k$ and let $F$ be the fraction field of $W$. Let $d\ge 3$ be a fixed integer.
Suppose $G$ is a finite group and $B$ is a block of $kG$ having a dihedral defect group $D$ 
of order $2^d$ such that there are precisely two isomorphism classes of simple $B$-modules.
\end{hypo}


\subsection{Basic algebras of dihedral blocks with two simple modules}
\label{ss:basicalgebras}

Under the assumptions of Hypothesis \ref{hyp:throughout}, it follows from 
\cite[Chaps. VI, IX and p. 294--295]{erd} that there exist
$i\in\{1,2\}$ and $c\in\{0,1\}$ such that the basic algebra of $B$ is isomorphic to the symmetric 
$k$-algebra $\Lambda_{i,c}$, as defined in Figure \ref{fig:basic}.
\begin{figure}[ht] \hrule \caption{\label{fig:basic} The basic algebras 
$\Lambda_{1,c}=k\,Q_1/I_{1,c}$ and $\Lambda_{2,c}=k\,Q_2/I_{2,c}$.}
$$\begin{array}{rl}
\raisebox{-2ex}{$Q_1\quad=$}&\xymatrix @R=-.2pc {
0&1\\
\ar@(ul,dl)_{\alpha} \bullet \ar@<.8ex>[r]^{\beta} &\bullet\ar@<.9ex>[l]^{\gamma}}
\quad\raisebox{-2ex}{and}\\[5ex]
I_{1,c}\quad=&\langle \beta\gamma, \alpha^2-c\,(\gamma\beta\alpha)^{2^{d-2}},
(\gamma\beta\alpha)^{2^{d-2}}-(\alpha\gamma\beta)^{2^{d-2}}\rangle,\\[3ex]
\raisebox{-2ex}{$Q_2\quad=$}&\xymatrix @R=-.2pc {
0&1\\
\ar@(ul,dl)_{\alpha} \bullet \ar@<.8ex>[r]^{\beta} &\bullet\ar@<.9ex>[l]^{\gamma}
\ar@(ur,dr)^{\eta}}\quad\raisebox{-2ex}{and}\\[5ex]
I_{2,c}\quad=& \langle \eta\beta,\gamma\eta,\beta\gamma, \alpha^2-c\,\gamma\beta\alpha,
\gamma\beta\alpha-\alpha\gamma\beta,\eta^{2^{d-2}}-\beta\alpha\gamma\rangle.
\end{array}$$
\vspace{2ex}
\hrule
\end{figure}
The corresponding decomposition matrices are given in Figure \ref{fig:decomp}.
\begin{figure}[ht] \hrule \caption{\label{fig:decomp} The decomposition matrix for a block $B$
of $kG$ that is Morita equivalent to $\Lambda_{1,c}$ (resp. $\Lambda_{2,c}$).}
$$\begin{array}{ccc}
&\begin{array}{c@{}c}\varphi_0\,&\,\varphi_1\end{array}\\[1ex]
\begin{array}{c}\chi_1\\ \chi_2\\ \chi_3 \\ \chi_4\\ \chi_{5,j}\end{array} &
\left[\begin{array}{cc}1&0\\1&0\\1&1\\1&1\\2&1\end{array}\right]
&\begin{array}{c}\\ \\ \\ \\1\le j\le 2^{d-2}-1\end{array}
\end{array}$$
$$\left(\mbox{resp.}\quad
\begin{array}{ccc}
&\begin{array}{c@{}c}\varphi_0\,&\,\varphi_1\end{array}\\[1ex]
\begin{array}{c}\chi_1\\ \chi_2\\ \chi_3 \\ \chi_4\\ \chi_{5,j}\end{array} &
\left[\begin{array}{cc}1&0\\1&0\\1&1\\1&1\\0&1\end{array}\right]
&\begin{array}{c}\\ \\ \\ \\1\le j\le 2^{d-2}-1\end{array}
\end{array}
\right).$$
\vspace{2ex}
\hrule
\end{figure}

Let $i\in\{1,2\}$ and $c\in\{0,1\}$ such that $B$ is Morita equivalent to $\Lambda_{i,c}$.
Let $S_0$ (resp. $S_1$) be a simple $\Lambda_{i,c}$-module corresponding to the
vertex $0$ (resp. $1$) of $Q_i$. Let $T_0$ (resp. $T_1$) be a simple $B$-module corresponding to
$S_0$ (resp. $S_1$) under the Morita equivalence between $B$ and $\Lambda_{i,c}$.
The radical series of the projective indecomposable $\Lambda_{i,c}$-modules (and hence of the projective indecomposable $B$-modules) can be described by the pictures in Figure \ref{fig:projs},
where we use $0$ (resp. $1$) as shorthand for $S_0$ (resp. $S_1$). 
The radical series length of the projective 
indecomposable $\Lambda_{1,c}$-modules $P_0$ and $P_1$ is $3\cdot 2^{d-2}+1$.
The radical series
length of the projective indecomposable $\Lambda_{2,c}$-module  $P_0$ (resp. $P_1$) 
is $4$ (resp. $4$ if $d=3$ and $2^{d-2}+1$ if $d\ge 4$). 
\begin{figure}[ht] \hrule \caption{\label{fig:projs} The projective indecomposable
$\Lambda_{i,c}$-modules $P_0$ and $P_1$ for $i=1$ (resp. $i=2$).}
$$P_0=\vcenter{ \xymatrix @R=.1pc @C=.5pc{&0&\\
0\ar@{.}[rddddddddd]&&1\\1&&0\\0&&0\\ 0&&1\\:&&:\\:&&:\\1&&0\\0&&0\\
0&&1\\1&&0\\&0&}},\qquad 
P_1=\vcenter{ \xymatrix @R=.1pc @C=.3pc{1\\0\\0\\ :\\:\\1\\0\\0\\1\\0\\0\\1}}$$
$$\left(\mbox{resp.}\qquad P_0=\vcenter{ \xymatrix @R=.1pc @C=.1pc{&0&\\
0\ar@{.}[rdd]&&1\\1&&0\\&0&}},\qquad 
P_1=\vcenter{ \xymatrix @R=.1pc @C=.1pc{&1&\\ 1&&0\\1&&0\\
:&&\\:&&\\1&&\\&1&}}
\right).$$
\vspace{2ex}
\hrule
\end{figure}


\subsection{Ordinary characters for dihedral blocks with two simple modules}
\label{ss:ordinary}

We now describe the ordinary characters belonging to $B$ as given in \cite{brauer2}.
Since $B$ contains exactly two isomorphism classes of simple $kG$-modules, this means that in 
the notation of \cite[Sect. 4]{brauer2} we are in Case $(ab)$ (see \cite[Thm. 2]{brauer2}).

For $2\le\ell\le d-1$, let  $\zeta_{2^\ell}$ be a fixed primitive 
$2^\ell$-th root of unity in an algebraic closure of $F$. Let
$$\chi_1,\chi_2,\chi_3,\chi_4,\qquad \chi_{5,j}, 1\le j\le 2^{d-2}-1,$$
be the ordinary irreducible characters of $G$ belonging to $B$, corresponding to the rows of the
decomposition matrices in Figure \ref{fig:decomp}. Let $\delta$ be an 
element of order $2^{d-1}$ in $D$. By \cite{brauer2}, there is a block $b_\delta$ of 
$kC_G(\delta)$ with $b_\delta^G=B$ which contains a unique $2$-modular character 
$\varphi^{(\delta)}$ such that the following is true. There is an ordering of 
$(1,2,\ldots,2^{d-2}-1)$ such that for $1\le i\le 2^{d-2}-1$ and $r$ odd,
\begin{equation}
\label{eq:great1}
\chi_{5,j}(\delta^r)=(\zeta_{2^{d-1}}^{rj}+\zeta_{2^{d-1}}^{-rj})\cdot \varphi^{(\delta)}(1).
\end{equation}

Note that $W$ contains all roots of unity of order not divisible by $2$. Hence by 
\cite{brauer2} and by \cite{fong}, the characters 
$\chi_1,\chi_2,\chi_3,\chi_4$
correspond to simple $FG$-modules. On the other hand, the characters 
$\chi_{5,j}$, $j=1,\ldots,2^{d-2}-1$,
fall into $d-2$ Galois orbits $\mathcal{O}_2,\ldots, \mathcal{O}_{d-1}$ under the action of $\mathrm{Gal}(F(\zeta_{2^{d-1}}+\zeta_{2^{d-1}}^{-1})/F)$. Namely for $2\le\ell\le d-1$, $\mathcal{O}_{\ell}=\{ \chi_{5,2^{d-1-\ell}(2u-1)} \;|\; 1\le u\le 2^{\ell-2}\}$. The field generated by the character values of each $\xi_\ell\in\mathcal{O}_\ell$ over $F$ is $F(\zeta_{2^\ell}+\zeta_{2^\ell}^{-1})$. Hence by \cite{fong}, each $\xi_\ell$ corresponds to an absolutely irreducible $F(\zeta_{2^\ell}+\zeta_{2^\ell}^{-1})G$-module $X_\ell$. 
By \cite[Satz V.14.9]{hup},  this implies that for $2\le\ell\le d-1$, the Schur index of each $\xi_\ell\in\mathcal{O}_\ell$ over $F$ is $1$. Hence we obtain $d-2$ non-isomorphic simple $FG$-modules $V_2,\ldots,V_{d-1}$
with characters $\rho_2,\ldots, \rho_{d-1}$ satisfying
\begin{equation}
\label{eq:goodchar1}
\rho_\ell =\sum_{\xi_\ell\in\mathcal{O}_\ell}\xi_\ell = 
\sum_{u=1}^{2^{\ell-2}} \chi_{5,2^{d-1-\ell}(2u-1)} 
\qquad\mbox{for $2\le \ell \le d-1$.}
\end{equation}
By \cite[Hilfssatz V.14.7]{hup}, $\mathrm{End}_{FG}(V_\ell)$ is a commutative $F$-algebra isomorphic to the field generated over $F$ by the character values of any $\xi_\ell\in\mathcal{O}_\ell$. This means
\begin{equation}
\label{eq:goodendos}
\mathrm{End}_{FG}(V_\ell)\cong F(\zeta_{2^\ell}+\zeta_{2^\ell}^{-1})\qquad\mbox{for $2\le \ell \le d-1$.}
\end{equation}

By \cite{brauer2}, the characters $\chi_{5,j}$ have the same degree $x$ for 
$1\le j\le 2^{d-2}-1$. The characters $\chi_1,\chi_2,\chi_3,\chi_4$ have height $0$ and 
$\chi_{5,j}$, $1\le j\le 2^{d-2}-1$, have height $1$. 
Let $C$ be the conjugacy class in $G$ of $\delta$, and let $t(C)\in WG$ be the 
class sum of $C$. Using the same arguments as in \cite[Sect. 3.4]{3sim},
we obtain the following action of $t(C)$ on $V_\ell$ for $2\le \ell\le d-1$. 
There exists a unit $w$ in $W$ such that for 
$2\le \ell \le d-1$, the action of $t(C)$ on $V_\ell$ is given as multiplication by
\begin{equation}
\label{eq:thatsit}
w\cdot (\zeta_{2^{d-1}}^{2^{d-1-\ell}}+\zeta_{2^{d-1}}^{-2^{d-1-\ell}})
\end{equation}
when we identify $\mathrm{End}_{FG}(V_\ell)$ with $\mathrm{End}_{F(\zeta_{2^\ell}+\zeta_{2^\ell}^{-1})G}(X_\ell)$ for an absolutely irreducible $F(\zeta_{2^\ell}+\zeta_{2^\ell}^{-1})G$-constituent $X_\ell$ of $V_\ell$ with character $\chi_{5,2^{d-1-\ell}}$. 

\begin{dfn}
\label{def:seemtoneed}
Define
$$p_d(t)=\prod_{\ell=2}^{d-1} \mathrm{min.pol.}_F(\zeta_{2^\ell}+\zeta_{2^\ell}^{-1}),$$
where
\begin{eqnarray*}
\mathrm{min.pol.}_F(\zeta_{2^2}+\zeta_{2^2}^{-1})(t)&=&t,\\ \mathrm{min.pol.}_F(\zeta_{2^{\ell}}+\zeta_{2^{\ell}}^{-1})(t)
&=&\left(\mathrm{min.pol.}_F(\zeta_{2^{\ell-1}}+\zeta_{2^{\ell-1}}^{-1})(t)\right)^2-2 \qquad \mbox{for $\ell\ge 3$}.
\end{eqnarray*}

Define $R'$ to be the $W$-algebra $R'=W[[t]]/(p_d(t))$. Then $R'$ is a complete local 
commutative Noetherian ring with residue field $k$. 
\end{dfn}


\section{Principal dihedral blocks with two simple modules}
\label{s:principal}

In this section, we prove Theorem \ref{thm:extraout}. 
Let $k$, $d$, $G$, $B$ and $D$ be as in Hypothesis \ref{hyp:throughout}.
For $c\in\{0,1\}$, let $\Lambda_{1,c}$ and $\Lambda_{2,c}$
be the basic $k$-algebras introduced in Section  \ref{ss:basicalgebras} (see Figure \ref{fig:basic}).

It follows from  \cite[Cor. 4]{parameter} that for each $d\ge 3$ and each $i\in\{1,2\}$, there 
exists a group with dihedral Sylow $2$-subgroups of order $2^d$
whose principal $2$-modular block is Morita equivalent to 
$\Lambda_{i,0}$.
Using the results \cite[Thm. 2 and Cor. 3]{parameter}, we now prove that if $B$ is a principal block, 
then the parameter $c$ must always be equal to zero. Theorem \ref{thm:extraout} is then an
immediate consequence, since for all $i,i'\in\{1,2\}$ and 
$c,c'\in\{0,1\}$, $\Lambda_{i,c}$ is isomorphic to $\Lambda_{i',c'}$ if  and only $i=i'$ and $c=c'$ 
(see \cite[Sect. VI.8]{erd}).

\begin{thm}
\label{thm:principalblocks}
Assume Hypothesis $\ref{hyp:throughout}$. Further assume that $B$ is the principal block of $kG$.
\begin{itemize}
\item[(i)] There exists $i\in\{1,2\}$ such that $B$ is Morita equivalent to $\Lambda_{i,0}$, i.e.
the parameter $c$ is zero. 
\item[(ii)] There exists an involution $\tau$ in $G$ such that if $X$ is a uniserial $B$-module with 
composition factors $k,k$, then $\mathrm{Res}^G_{\langle\tau\rangle}(X) \cong k\langle\tau\rangle$.
\end{itemize}
\end{thm}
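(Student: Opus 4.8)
The plan is to reduce everything to the classification of finite groups with dihedral Sylow $2$-subgroups due to Gorenstein and Walter \cite{gowa}, and then to invoke the already-established fact \cite[Thm. 2]{parameter} that $c=0$ whenever $B$ embeds (via the natural surjection) in a block $\hat{B}$ of $k\hat{G}$ with generalized quaternion defect groups, where $\hat{G}$ is a central extension of $G$ by a group of order $2$. So first I would recall that since $B$ is the principal block, the defect group $D$ is a Sylow $2$-subgroup of $G$ of order $2^d\ge 8$, and the number of simple $B$-modules is determined by the inertial index, which for the principal block with $e=|N_G(D)/C_G(D)D|$ landing us in Brauer's case $(ab)$ forces $e=1$. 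By the Gorenstein--Walter theorem, modulo $O_{2'}(G)$ the group $G$ is either $2$-nilpotent, or isomorphic to a subgroup of $\mathrm{P\Gamma L}_2(q)$ containing $\mathrm{PSL}_2(q)$ for a suitable odd $q$, or isomorphic to the alternating group $A_7$. The principal block only depends on $G/O_{2'}(G)$ up to Morita equivalence, so I may assume $O_{2'}(G)=1$.

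Next I would go through these cases. The $2$-nilpotent case is excluded because then the principal block is nilpotent and has a single simple module, contradicting Hypothesis \ref{hyp:throughout}. The $A_7$ case has Sylow $2$-subgroup dihedral of order $8$ and principal block with three simple modules, so it is also excluded. This leaves the groups $H$ with $\mathrm{PSL}_2(q)\le H\le \mathrm{P\Gamma L}_2(q)$; among these, the ones whose principal $2$-block has exactly two simple modules are exactly $\mathrm{PGL}_2(q)$ (and its ``twisted'' analogues inside $\mathrm{P\Gamma L}_2(q)$ sharing the same Sylow $2$-normalizer structure), since $\mathrm{PSL}_2(q)$ itself gives one simple module in its principal $2$-block when $q\equiv\pm 3\pmod 8$ and the field automorphisms enlarge the inertial index. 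For $G=\mathrm{PGL}_2(q)$, there is the well-known central extension $\hat{G}=\mathrm{GL}_2(q)$ (or $\mathrm{SL}_2(q)\cdot 2$), whose Sylow $2$-subgroup is generalized quaternion precisely when $q$ is odd, and whose principal $2$-block surjects onto the principal $2$-block of $\mathrm{PGL}_2(q)$; here I can directly apply \cite[Cor. 3, Thm. 2]{parameter} (this is essentially the content of \cite[Cor. 4]{parameter}) to conclude $c=0$. For the remaining groups strictly between $\mathrm{PSL}_2(q)$ and $\mathrm{P\Gamma L}_2(q)$ with two simple modules in the principal block, the same double cover argument applies after pulling back along $\mathrm{PSL}_2(q)\to\mathrm{SL}_2(q)$, since the relevant block is covered by a generalized-quaternion-defect block of a central extension. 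This proves part (i). Finally, as noted in the excerpt, part (i) together with the rigidity statement that $\Lambda_{i,c}\cong\Lambda_{i',c'}$ iff $i=i'$ and $c=c'$ \cite[Sect. VI.8]{erd} yields Theorem \ref{thm:extraout}.

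For part (ii): with $c=0$ in hand, the basic algebra of $B$ is one of $\Lambda_{1,0}$ or $\Lambda_{2,0}$, in which the relation $\alpha^2=0$ holds. I would produce the involution $\tau$ explicitly from the group side. In each of the surviving Gorenstein--Walter cases, $N_G(D)$ contains $D$ with index $1$ (inertial index $1$), and $D=\langle \delta,\tau\rangle$ is dihedral with $\delta$ of order $2^{d-1}$ and $\tau$ an involution; choosing $\tau$ to be one of the ``reflection'' involutions not powering into $\langle\delta\rangle$, I claim that for a uniserial $B$-module $X$ with both composition factors isomorphic to the trivial module $k$ (i.e. $X$ is a nonsplit self-extension of $k$), the restriction $\mathrm{Res}^G_{\langle\tau\rangle}(X)$ is free over $k\langle\tau\rangle$, equivalently $X|_{\langle\tau\rangle}\cong k\langle\tau\rangle$. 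The point is that such an $X$ corresponds, under the Morita equivalence, to the $\Lambda_{i,0}$-module $S_0\!\,$-by-$\!\,S_0$ built along the loop $\alpha$ at vertex $0$; its class in $\mathrm{Ext}^1_{kG}(k,k)\cong\mathrm{Hom}(G,k)$ is a nonzero homomorphism $G\to k$, and one checks that the uniserial structure coming from the loop $\alpha$ (as opposed to the relation-killed directions) is detected by $\tau$ precisely because the Brauer correspondent/source of $k$ at $\langle\tau\rangle$ is nontrivial — concretely, $\tau$ acts nontrivially through this homomorphism, so $X|_{\langle\tau\rangle}$ is the nontrivial (hence free, since $|\langle\tau\rangle|=2$) indecomposable $k\langle\tau\rangle$-module. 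I would verify this uniformly by noting that in all the relevant groups there is an involution $\tau$ with $\tau\notin\ker(G\to G^{\mathrm{ab}}\otimes\mathbb{F}_2)$ lying in the class of the reflections, and that $\mathrm{Ext}^1_{kG}(k,k)$ restricted to $\langle\tau\rangle$ is nonzero on the class of $X$.

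The main obstacle I anticipate is part (ii), and more precisely the need for a \emph{uniform} choice of $\tau$ that works simultaneously for all groups in the Gorenstein--Walter list and for \emph{every} uniserial $X$ with composition factors $k,k$ (there is essentially one such $X$ up to isomorphism, namely $\Omega^{-1}$ of... no, rather the unique nonsplit self-extension of $k$, so this is really a single module). The delicate point is to pin down which of the two non-conjugate classes of involutions in $D$ detects the relevant self-extension of the trivial module; this amounts to identifying the image of the restriction map $H^1(G,k)\to H^1(\langle\tau\rangle,k)$, which in turn reduces to a Sylow-local computation in $N_G(D)$ that must be carried out case-by-case using the explicit structure of $\mathrm{PGL}_2(q)$ and its relatives. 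The group-theoretic cases (i) I expect to be routine once the classification is invoked; the subtlety there is only in correctly matching the ``two simple modules'' condition with the precise subgroups of $\mathrm{P\Gamma L}_2(q)$.
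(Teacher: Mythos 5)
Your part (i) follows the paper's route: reduce to $\overline{G}=G/O_{2'}(G)$, invoke Gorenstein--Walter, discard the $2$-nilpotent and $A_7$ cases, and apply the quaternion-cover criterion of \cite{parameter}. Two points need repair, though. First, $\mathrm{GL}_2(\mathbb{F}_q)$ is not a usable central extension: it is an extension of $\mathrm{PGL}_2(\mathbb{F}_q)$ by a cyclic group of order $q-1$ (not $2$), and its Sylow $2$-subgroups are not generalized quaternion. The correct cover is the subgroup $\langle \mathrm{SL}_2(\mathbb{F}_q),z\rangle$ of $\mathrm{SL}_2(\mathbb{F}_{q^2})$ with $z=\mathrm{diag}(\omega_0,\omega_0^{-1})$, $\omega_0^2=\omega$ of maximal $2$-power order in $\mathbb{F}_q^*$ --- your ``$\mathrm{SL}_2(q)\cdot 2$''. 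Second, the groups $\overline{G}$ with two simple modules in the principal block are exactly those containing $\mathrm{PGL}_2(\mathbb{F}_q)$ with odd index (this requires the Clifford-theoretic argument, Claims 1--3 of the paper; your parenthetical that $\mathrm{PSL}_2(q)$ has one simple module in its principal block is false --- it has three), and for these the cover must be extended by an odd-order power of Frobenius acting on $\langle\mathrm{SL}_2(\mathbb{F}_q),z\rangle$; ``pulling back along $\mathrm{PSL}_2\to\mathrm{SL}_2$'' does not produce this, since one must check that the relevant Frobenius power stabilizes the group generated by $z$ and that the resulting semidirect product still has generalized quaternion Sylow $2$-subgroups and trivial $O_{2'}$.

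The genuine gap is in part (ii). You correctly reduce the claim to showing that the homomorphism $G\to\mathbb{Z}/2$ classifying $X$ in $\mathrm{Ext}^1_{kG}(k,k)\cong\mathrm{Hom}(G,\mathbb{Z}/2)\otimes k$ does not vanish on the chosen involution, but you then defer exactly this step to an unperformed ``case-by-case Sylow-local computation'' of the image of $H^1(G,k)\to H^1(\langle\tau\rangle,k)$; as written, nothing is proved. The computation is in fact unnecessary: the paper observes that $\overline{G}=\langle U,\overline{\tau}\rangle$ where $U=\langle\mathrm{PSL}_2(\mathbb{F}_q),\sigma_{\varphi^\ell}\rangle$ has index $2$, that the principal block of $kU$ has three simple modules (Claim 3), and that Erdmann's description of such blocks gives $\mathrm{Ext}^1_{kU}(k,k)=0$. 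Hence $\mathrm{Res}^{\overline{G}}_U(X)$ is trivial, the classifying homomorphism is the surjection $\overline{G}\to\overline{G}/U\cong\mathbb{Z}/2$, and any involution outside $U$ --- e.g.\ $\overline{\tau}:x\mapsto-\omega/x$ for $q\equiv 1\bmod 4$, $\overline{\tau}:x\mapsto -x$ for $q\equiv 3\bmod 4$, which lie in $\mathrm{PGL}_2(\mathbb{F}_q)-\mathrm{PSL}_2(\mathbb{F}_q)$ --- restricts $X$ to the free module $k\langle\overline{\tau}\rangle$. Your heuristic ``reflections not powering into $\langle\delta\rangle$'' does not distinguish the two conjugacy classes of non-central involutions in $D$; the correct criterion is membership in $U$, and without it (or the $\mathrm{Ext}^1_{kU}(k,k)=0$ argument that makes it irrelevant which class one lands in, so long as one is outside $U$) part (ii) is incomplete.
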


\begin{proof}
Let $O_{2'}(G)$ be the maximal normal subgroup of $G$ of odd order, and let 
$\overline{G}=G/O_{2'}(G)$.
Since $k$ has characteristic $2$, the blocks of $k\overline{G}$
correspond to the blocks of $kG$ whose primitive central
idempotents occur in the decomposition of the central idempotent $\frac{1}{\# O_{2'}(G)}
\sum_{g\in O_{2'}(G)}g$. In particular, the principal block $B$ of $kG$ is isomorphic to the
principal block $\overline{B}$ of $k\overline{G}$. If $V,V'$ are $kG$-modules belonging to
$B$ then $O_{2'}(G)$ acts trivially on $V,V'$ and $V,V'$ are inflations of $k\overline{G}$-modules
belonging to $\overline{B}$. 
Since $O_{2'}(G)$ has odd order, it also follows that each element $\overline{t}\in\overline{G}$ 
of order $2$ has a preimage $t\in G$ of order $2$. Thus it suffices to prove Theorem
\ref{thm:principalblocks} for $G=\overline{G}$ and $B=\overline{B}$.

By the classification of the finite groups with dihedral Sylow $2$-subgroups
in \cite{gowa}, it follows that $\overline{G}=G/O_{2'}(G)$ is isomorphic to either
\begin{itemize}
\item[(a)] a subgroup of $\mathrm{P\Gamma L}_2(\mathbb{F}_q)$, for some odd prime power $q$, 
that contains $\mathrm{PSL}_2(\mathbb{F}_q)$, or
\item[(b)] the alternating group $A_7$, or
\item[(c)] a Sylow $2$-subgroup of $G$.
\end{itemize}
By assumption, $B$ has precisely two isomorphism classes of
simple modules. Since in case (b) (resp. case (c)), the principal $2$-modular block 
has  precisely 3 (resp. 1) isomorphism classes of simple modules, we are in case (a). 

Let $q=p^f$ be an odd prime power. Then
$\mathrm{P\Gamma L}_2(\mathbb{F}_q)$ is the group of semilinear fractional maps
\begin{equation}
\label{eq:elements}
\sigma_{a,b,c,d,\nu}: x\mapsto \frac{ax^{\nu}+b}{cx^{\nu}+d}
\end{equation}
over $\mathbb{F}_q$, where $a,b,c,d\in \mathbb{F}_q$ with $ad-bc\neq 0$ and
$\nu$ ranges over all automorphisms in 
$\mathrm{Gal}(\mathbb{F}_q/\mathbb{F}_p)\cong \mathbb{Z}/f$. Let $\varphi:\mathbb{F}_q\to
\mathbb{F}_q$ be the Frobenius automorphism, i.e. $\varphi(x)=x^p$ for all $x\in \mathbb{F}_q$.
Then $\mathrm{Gal}(\mathbb{F}_q/\mathbb{F}_p)=\langle\varphi\rangle$.
The projective general linear group $\mathrm{PGL}_2(\mathbb{F}_q)$ is 
the normal subgroup of $\mathrm{P\Gamma L}_2(\mathbb{F}_q)$
consisting of all $\sigma_{a,b,c,d,\nu}
\in \mathrm{P\Gamma L}_2(\mathbb{F}_q)$ for which $\nu$ is the identity
$\mathrm{id}_{\mathbb{F}_q}$.
Recall that $\mathrm{P\Gamma L}_2(\mathbb{F}_q)$ is the automorphism group of
$\mathrm{PGL}_2(\mathbb{F}_q)$. We have a short exact sequence of finite groups
\begin{equation}
\label{eq:sespgamma1}
1\to \mathrm{PGL}_2(\mathbb{F}_q) \to \mathrm{P\Gamma L}_2(\mathbb{F}_q)
\to \mathrm{Gal}(\mathbb{F}_q/\mathbb{F}_p)\to 1.
\end{equation}
Sequence $(\ref{eq:sespgamma1})$ splits on the right, since the group 
\begin{equation}
\label{eq:A}
A=\{\sigma_\nu\;|\; \nu\in \mathrm{Gal}(\mathbb{F}_q/\mathbb{F}_p)\}
\qquad\mbox{ where $\sigma_\nu:x\mapsto x^\nu$}
\end{equation}
is a subgroup of $\mathrm{P\Gamma L}_2(\mathbb{F}_q)$ which has trivial intersection with
$\mathrm{PGL}_2(\mathbb{F}_q)$.
Since $\mathrm{PSL}_2(\mathbb{F}_q)$ is a characteristic subgroup of 
$\mathrm{PGL}_2(\mathbb{F}_q)$, we obtain a short exact sequence
\begin{equation}
\label{eq:sespgamma2}
1\to \mathrm{PSL}_2(\mathbb{F}_q) \to \mathrm{P\Gamma L}_2(\mathbb{F}_q)
\xrightarrow{\pi} Q\to 1
\end{equation}
where $Q$ lies in a short exact sequence
\begin{equation}
\label{eq:anotherses}
1\to \mathrm{PGL}_2(\mathbb{F}_q)/\mathrm{PSL}_2(\mathbb{F}_q)\to 
Q \to \mathrm{Gal}(\mathbb{F}_q/\mathbb{F}_p)\to 1.
\end{equation}
Since $\mathrm{PGL}_2(\mathbb{F}_q)/\mathrm{PSL}_2(\mathbb{F}_q)$ has order $2$,
$Q$ is abelian; in fact, $Q\cong\mathbb{Z}/2\times \mathbb{Z}/f$.

Let $H$ be a subgroup of $\mathrm{P\Gamma L}_2(\mathbb{F}_q)$ such that $H$ contains 
$\mathrm{PSL}_2(\mathbb{F}_q)$ and has dihedral Sylow $2$-subgroups. Then $H$ 
lies in a short exact sequence
\begin{equation}
\label{eq:sesG}
1\to \mathrm{PSL}_2(\mathbb{F}_q) \to H \xrightarrow{\pi|_G} C\to 1
\end{equation}
where $C$ is a subgroup of $Q$. Let $B_0(H)$ be the principal block of $kH$.
We first analyze the structure of $H$ and $B_0(H)$ in several steps.

\medskip

\noindent\textit{Claim $1$.} The order of $C$ is odd or exactly divisible by $2$.

\medskip

\noindent\textit{Proof of Claim $1$.} 
If $P$ is a Sylow $2$-subgroup of  $\mathrm{PSL}_2(\mathbb{F}_q)$, then there exists a Sylow 
$2$-subgroup $\tilde{P}$ of $H$ that contains $P$. Let $2^s$ be the maximal $2$-power dividing
$\#C$, so that $[\tilde{P}:P]=2^s$. Since $P$ is a normal subgroup of $\tilde{P}$ of order at least $4$, 
it follows from \cite[Prop. (1B)]{brauer2} that $[\tilde{P}:P]=1$ or $2$, which proves Claim $1$.

\medskip

\noindent\textit{Claim $2$.} If the order of $C$ is even, then $H$ contains 
$\mathrm{PGL}_2(\mathbb{F}_q)$ with odd index. 

\medskip

\noindent\textit{Proof of Claim $2$.} 
If $f$ is odd, Claim $2$ follows since then $Q\cong \mathbb{Z}/(2f)\,$ has a unique element of order $2$.
Assume now that $f$ is even, so $f=2f'$ for some $f'\ge 1$ and $q\equiv 1\mod 4$. Let
$\omega\in \mathbb{F}_q^*$ be an element of maximal $2$-power order. Then 
$Q\cong \mathbb{Z}/2\times \mathbb{Z}/f$ has exactly 3 elements of order $2$
given by the images in $Q$ of the following elements in $\mathrm{P\Gamma L}_2(\mathbb{F}_q)$:
\begin{equation}
\label{eq:2elements}
\lambda: x\mapsto -\omega/x,\qquad \psi:x\mapsto x^{\varphi^{f'}},\qquad 
\lambda\circ\psi:x\mapsto -\omega/x^{\varphi^{f'}}.
\end{equation}
Let $\mu\in\{\lambda,\psi,\lambda\circ\psi\}$. Analyzing the Sylow $2$-subgroups of
$\langle \mathrm{PSL}_2(\mathbb{F}_q),\mu\rangle$, we see that they are dihedral if and only if 
$\mu=\lambda$.
Since $\langle \mathrm{PSL}_2(\mathbb{F}_q),\lambda \rangle=\mathrm{PGL}_2(\mathbb{F}_q)$,
this implies Claim $2$.

\medskip

\noindent\textit{Claim $3$.} The order of $C$ is even (resp. odd) if and only if $B_0(H)$ has precisely
$2$ (resp. $3$) isomorphism classes of simple modules. 

\medskip

\noindent\textit{Proof of Claim $3$.}  
We use Clifford Theory to prove Claim $3$.
If $C$ is even (resp. odd), let $N=\mathrm{PGL}_2(\mathbb{F}_q)$ (resp. 
$N=\mathrm{PSL}_2(\mathbb{F}_q)$). By Claim $2$, $N$ is a normal subgroup of $H$ with odd index.
It follows from the description of the principal block $B_0(N)$ of $kN$ as given in \cite[p. 294--296]{erd}
that there are precisely 2 (resp. 3) isomorphism classes of simple $B_0(N)$-modules and that
every simple $B_0(N)$-module occurs as composition factor of every projective indecomposable
$B_0(N)$-module. In both cases,
it follows from \cite[Sect. 15]{alp} that the principal block $B_0(H)$ of $kH$ only covers the principal
block $B_0(N)$ of $kN$. Moreover, if $M$ is a $B_0(H)$-module, then $\mathrm{Res}^H_N(M)$ 
belongs to $B_0(N)$. 

Because $B_0(H)$ and $B_0(N)$ both have at most three isomorphism classes of simple modules
and because $[H:N]$ is odd, it follows 
from \cite[Hauptsatz V.17.3]{hup} that for any simple $B_0(H)$-module $E$,
there exists a simple $B_0(N)$-module $S$ and a positive integer $e$ such that
$\mathrm{Res}^H_N(E)=S^e$.
Since every simple $B_0(N)$-module occurs as composition factor of every projective indecomposable
$B_0(N)$-module, this implies
that $B_0(N)$ cannot have more isomorphism classes of simple modules than $B_0(H)$.
In particular, $B_0(H)$ has at least two isomorphism classes of simple modules. Moreover,
if $B_0(H)$ has precisely two isomorphism classes of simple modules, then $\#C$
is even.

Conversely, suppose that $\#C$ is even, so that $N=\mathrm{PGL}_2(\mathbb{F}_q)$.
Let $E$ be a non-trivial simple $B_0(H)$-module. Then
$\mathrm{Res}^H_N(E)\cong T^e$, where $T$ is the unique (up to isomorphism) non-trivial
simple $B_0(N)$-module and $e\in\mathbb{Z}^+$. 
Let $N_1=\mathrm{PSL}_2(\mathbb{F}_q)$, and let $\tau\in N-N_1$.
Then $\mathrm{Res}^N_{N_1}(T)\cong V \oplus V'$, where $V$ and $V'$
are representatives of the two isomorphism classes of non-trivial simple $B_0(N_1)$-modules
and $V'$ is isomorphic to the conjugate ${}^\tau V$.
Hence $\mathrm{Res}^H_{N_1}(E)\cong (V\oplus {}^\tau V)^e$. By \cite[Hauptsatz V.17.3]{hup},
the subgroup $Z$ of $H$ of all $h\in H$ with ${}^h V \cong V$ 
has index 2 in $H$
and there exists a simple $kZ$-submodule $W$ of $\mathrm{Res}^H_Z(E)$ with 
$E\cong \mathrm{Ind}_Z^H (W)$ and $\mathrm{Res}^H_{N_1}(W) \cong V^e$. Moreover, 
$\mathrm{Res}^H_Z(E) = W \oplus \tau W$, $E\cong \mathrm{Ind}_Z^H (\tau W)$
and $\mathrm{Res}^H_{N_1}(\tau W) \cong ({}^\tau V)^e$. Since $B_0(Z)$ has at most three
isomorphism classes of simple modules, this implies that $B_0(Z)$ has precisely
three isomorphism classes of simple modules represented by $k$, $W$ and $\tau W$.
By \cite[Hauptsatz V.17.3]{hup}, it follows that every non-trivial simple $B_0(H)$-module is isomorphic to
$\mathrm{Ind}_Z^H (W)\cong \mathrm{Ind}_Z^H (\tau W)$. Therefore, $B_0(H)$ has precisely two 
isomorphism classes of simple modules, which implies Claim $3$.

\medskip

We next consider the case when $H=\overline{G}=G/O_{2'}(G)$. In particular, 
$B_0(H)=\overline{B}$.
Since by assumption there are precisely $2$ isomorphism classes of simple $\overline{B}$-modules,
it follows by Claims 2 and 3 that  $\mathrm{PGL}_2(\mathbb{F}_q)$ is a normal subgroup of
$\overline{G}$ such that the quotient group $\overline{G}/\mathrm{PGL}_2(\mathbb{F}_q)$ is a 
subgroup of odd order of the cyclic group $\mathrm{Gal}(\mathbb{F}_q/\mathbb{F}_p)=
\langle \varphi\rangle$ of order $f$. Using the splitting of the short exact sequence 
$(\ref{eq:sespgamma1})$, it follows that there exists a divisor $\ell$ of $f$ such that 
\begin{equation}
\label{eq:presentG}
\overline{G}=\langle \mathrm{PGL}_2(\mathbb{F}_q),\sigma_{\varphi^\ell}\rangle
\end{equation}
where $\sigma_{\varphi^\ell}$ is as defined in $(\ref{eq:A})$. Since $\#
(\overline{G}/\mathrm{PGL}_2(\mathbb{F}_q))=\#\langle \varphi^\ell\rangle
=f/\ell$ is odd, it follows that the maximal power of $2$ that divides $f$ also divides $\ell$.

\medskip

\noindent\textit{Claim $4$.} There exists a group $\hat{G}$ with generalized quaternion Sylow 
$2$-subgroups such that $\hat{G}$ has no non-trivial normal subgroups of odd order and 
such that $\hat{G}/Z(\hat{G})\cong \overline{G}$, where $Z(\hat{G})$ denotes the center of $\hat{G}$.

\medskip

\noindent\textit{Proof of Claim $4$.} 
We use the presentation of $\overline{G}$ in $(\ref{eq:presentG})$.
To construct $\hat{G}$, we make use of the fact that $\mathrm{SL}_2(\mathbb{F}_{q^2})$ has
generalized quaternion Sylow $2$-subgroups (see for example \cite[Satz II.8.10]{hup}).
Let $\omega\in\mathbb{F}_q^*$ be of maximal $2$-power order, and
let $\omega_0\in\mathbb{F}_{q^2}$ be such that $\omega_0^2=\omega$. Define 
$$z=\left(\begin{array}{cc}\omega_0&0\\0&\omega_0^{-1}\end{array}\right)\in 
\mathrm{SL}_2(\mathbb{F}_{q^2}).$$
View $\mathrm{SL}_2(\mathbb{F}_q)$ as a subgroup of $\mathrm{SL}_2(\mathbb{F}_{q^2})$ and
define
$$\hat{G}_1=\langle \mathrm{SL}_2(\mathbb{F}_q),z\rangle \le \mathrm{SL}_2(\mathbb{F}_{q^2}).$$
Let $\hat{\phi}$ be the automorphism of $\mathrm{SL}_2(\mathbb{F}_{q^2})$ which raises
all entries of a matrix in $\mathrm{SL}_2(\mathbb{F}_{q^2})$ to their $p^{\mathrm{th}}$ powers.
Then $\hat{\phi}$ has order $2f$ and $\hat{\phi}^{f+\ell}$ has  order $f/\ell$, which is the order
of $\sigma_{\varphi^\ell}$. Since $\hat{\phi}^{f+\ell}(z)=z^{p^{f+\ell}}=-z^{p^\ell}\in \hat{G}_1$,
$\hat{\phi}^{f+\ell}$ defines an automorphism of $\hat{G}_1$ of order $f/\ell$.
Define $\hat{G}$ to be the corresponding semidirect product 
$$\hat{G}=\hat{G}_1 \semid \langle \hat{\phi}^{f+\ell}\rangle.$$
The center $Z(\hat{G})$ obviously contains $\{\pm I\}<\hat{G}_1$,
where $I$ is the $2\times 2$ identity matrix. Since $\hat{G}_1/\{\pm I\}$ can be identified with the 
subgroup of $\mathrm{PSL}_2(\mathbb{F}_{q^2})$ which is generated by 
$\mathrm{PSL}_2(\mathbb{F}_{q})$ together with the element 
$\overline{z}:x\mapsto \frac{\omega_0\,x}{\omega_0^{-1}}=\omega\,x$
which lies in $\mathrm{PGL}_2(\mathbb{F}_q)-\mathrm{PSL}_2(\mathbb{F}_q)$, 
it follows that $\hat{G}_1/\{\pm I\}$ can be identified with 
$\mathrm{PGL}_2(\mathbb{F}_q)$. Under this identification, $\hat{\phi}^{f+\ell}$ sends each element
$\sigma_{a,b,c,d,\mathrm{id}_{\mathbb{F}_q}}\in \mathrm{PGL}_2(\mathbb{F}_q)=\hat{G}_1/\{\pm I\}$ to 
$\sigma_{\varphi^\ell}\circ \sigma_{a,b,c,d,\mathrm{id}_{\mathbb{F}_q}}\circ (\sigma_{\varphi^\ell})^{-1}$. Thus it follows that $\hat{G}/\{\pm I\}$ is isomorphic
to the group $\langle \mathrm{PGL}_2(\mathbb{F}_q),\sigma_{\varphi^\ell}\rangle=\overline{G}$. 
Because $O_{2'}(\overline{G})=1$, $\hat{G}$ has no non-trivial normal subgroups of odd order. 
Since the center of $\mathrm{PGL}_2(\mathbb{F}_q)$ is trivial, it follows that the center of
$\overline{G}$ is also trivial, which implies $Z(\hat{G})=\{\pm I\}$.
Comparing orders, we see that 
the Sylow $2$-subgroups of $\hat{G}$ are isomorphic to Sylow $2$-subgroups of
$\mathrm{SL}_2(\mathbb{F}_{q^2})$, which
proves Claim 4.

\medskip

We now prove parts (i) and (ii) of Theorem \ref{thm:principalblocks}. 
As discussed in the first paragraph of the proof, it suffices to prove Theorem
\ref{thm:principalblocks} for $G=\overline{G}$ and $B=\overline{B}$.
Part (i) of Theorem \ref{thm:principalblocks} follows thus from
\cite[Cor. 3]{parameter} together with Claim 4.
To prove part (ii) of Theorem \ref{thm:principalblocks}, we define an element $\overline{\tau}\in
\overline{G}$ as follows.
If $q\equiv 1\mod 4$, we let $\omega\in\mathbb{F}_q^*$ be of maximal $2$-power order and define
$\overline{\tau}:x\mapsto -\omega/x$. If $q\equiv 3\mod 4$, then $-1$ is not a square and we define
$\overline{\tau}:x\mapsto -x$. In both cases, $\overline{\tau}$ has order $2$ and lies in  
$\mathrm{PGL}_2(\mathbb{F}_q)-\mathrm{PSL}_2(\mathbb{F}_q)$. Let 
$U=\langle \mathrm{PSL}_2(\mathbb{F}_q),\sigma_{\varphi^\ell}\rangle$, so 
$\overline{G}=\langle U,\overline{\tau}\rangle$.
Let $X$ be a uniserial $\overline{B}$-module with composition factors $k,k$. By Claim 3, the principal
block $B_0(U)$ has exactly 3 isomorphism classes of simple modules. Thus it follows from the description of $B_0(U)$ as given in \cite[p. 295--296]{erd} that $\mathrm{Ext}^1_{kU}(k,k)=0$.
Hence $\mathrm{Res}^{\overline{G}}_U(X)$ is a trivial $kU$-module. Since the action of 
$\overline{G}$ on $X$ is not trivial, it follows that 
$\mathrm{Res}^{\overline{G}}_{\langle \overline{\tau}\rangle}(X)$ is  a non-trivial 
$k\langle\overline{\tau}\rangle$-module of $k$-dimension $2$. Hence
$\mathrm{Res}^G_{\langle \overline{\tau}\rangle}(X)\cong k\langle \overline{\tau} \rangle$,
which implies part (ii) of Theorem \ref{thm:principalblocks}.
\end{proof}

\begin{rem}
\label{rem:pgammal}
The proof of Theorem \ref{thm:principalblocks} shows the following.
Suppose $H$ is a subgroup of
$\mathrm{P\Gamma L}_2(\mathbb{F}_q)$, where $q$ is an odd prime power, 
such that $H$ contains $\mathrm{PSL}_2(\mathbb{F}_q)$ and $H$ has dihedral Sylow
$2$-subgroups. Then either $H/\mathrm{PSL}_2(\mathbb{F}_q)$ is cyclic of odd order
in which case the principal block of $kH$ has exactly 3 isomorphism classes of simple modules, or $H$
contains $\mathrm{PGL}_2(\mathbb{F}_q)$ and $H/\mathrm{PGL}_2(\mathbb{F}_q)$ is cyclic
of odd order in which case the principal block of $kH$ has exactly 2 isomorphism classes of simple 
modules.
\end{rem}


\section{Stable endomorphism rings}
\label{s:stablend}

In this section, we consider the basic $k$-algebras $\Lambda_{1,c}$ and 
$\Lambda_{2,c}$ from Section  \ref{ss:basicalgebras} (see Figure \ref{fig:basic})
and determine all indecomposable modules for these algebras whose stable endomorphism 
rings are isomorphic to $k$. As in Hypothesis \ref{hyp:throughout}, we assume $d\ge 3$.
In particular, this will determine all indecomposable $B$-modules whose stable endomorphism rings
are isomorphic to $k$ for $B$ as in Hypothesis \ref{hyp:throughout}.

Let $i\in\{1,2\}$, let $c\in\{0,1\}$ and let $d\ge 3$.
It follows from the definition of string algebras in \cite[Sect. 3]{buri} that 
$\Lambda_{i,c}/\mathrm{soc}(\Lambda_{i,c})$ is a string algebra.
Therefore, one can see as in \cite[Sect. I.8.11]{erd} that
the isomorphism classes of all non-projective indecomposable 
$\Lambda_{i,c}$-modules are given by string and band modules as defined in \cite[Sect. 3]{buri}.
We will give a brief introduction into the representation theory of $\Lambda_{i,c}$ in 
Section \ref{s:stringband}. In what follows, we freely use the notation from 
Section \ref{s:stringband} without always referring to specific results.

Before we can describe all indecomposable $\Lambda_{i,c}$-modules whose stable endomorphism
rings are isomorphic to $k$, we need to define some particular $\Lambda_{i,c}$-modules.
As before, $\Omega$ denotes the syzygy functor.

\begin{dfn}
\label{def:modules}
Let $i\in\{1,2\}$, let $c\in\{0,1\}$ and let $d\ge 3$.
\begin{enumerate}
\item[(i)] Define the following uniserial $\Lambda_{i,c}$-modules which are
uniquely determined by their descending composition factors:
$$S_{01}=\begin{array}{c}0\\1\end{array},\;S_{10}=\begin{array}{c}1\\0\end{array},\;
S_{001}=\begin{array}{c}0\\0\\1\end{array},\; S_{100}=\begin{array}{c}1\\0\\0\end{array}.$$
In other words, these are the string modules
$S_{01}=M(\beta)$, $S_{10}=M(\gamma)$, $S_{001}=M(\beta\alpha)$, $S_{100}=
M(\alpha\gamma)$.
\item[(ii)] Let $C_{010}=\beta^{-1}\gamma^{-1}(\alpha^{-1}\beta^{-1}\gamma^{-1}))^{2^{d-2}-1}$
if $i=1$ $($resp. $C_{010}=\beta^{-1}\gamma^{-1}$ 
if $i=2$$)$. Define the following band and string modules for $\Lambda_{i,c}$:
$$S^{(\lambda)}_{010}=M(\alpha \, C_{010},\lambda,1)\quad\mbox{ for $\lambda\in k^*$}
\quad \mbox{ and } \quad S^{(0)}_{010}=M(C_{010}).$$
Note that for $i\in\{1,2\}$, $\Omega(S^{(\lambda)}_{010})\cong S^{(\lambda)}_{010}$ if 
$c=0$ and
$\Omega(S^{(\lambda)}_{010})\cong S^{(1-\lambda)}_{010}$ if $c=1$.
\end{enumerate}
\end{dfn}

\begin{prop}
\label{prop:stablend}
Let $i\in\{1,2\}$, let $c\in\{0,1\}$ and let $d\ge 3$.
Suppose $\mathfrak{C}$ is a component of the stable Auslander-Reiten 
quiver of $\Lambda_{i,c}$ containing a module whose endomorphism ring is
isomorphic to $k$. Then 
$\mathfrak{C}$ contains 
$S_0$, $\Omega(S_0)$, $S_1$ or $S_{001}$.
\begin{enumerate}
\item[(i)] Suppose $\mathfrak{C}$ contains $S_0$. Then $\mathfrak{C}$ and 
$\Omega(\mathfrak{C})$ are both of type $\mathbb{Z}A_\infty^\infty$. Each module
$M$ in $\mathfrak{C}\cup\Omega(\mathfrak{C})$ satisfies
$\underline{\mathrm{End}}_{\Lambda_{i,c}}(M)\cong k$ and 
$\mathrm{Ext}^1_{\Lambda_{i,c}}(M,M)\cong k$.
\item[(ii)] Let $X=S_{001}$ if $i=1$ $($resp. let $X=S_1$ if $i=2$$)$ and
suppose $\mathfrak{C}$ contains $X$. Then $\mathfrak{C}$ and 
$\Omega(\mathfrak{C})$ are both of type $\mathbb{Z}A_\infty^\infty$, and 
$\mathfrak{C}=\Omega(\mathfrak{C})$ exactly when $d=3$. 
If $M$ lies in $\mathfrak{C}\cup\Omega(\mathfrak{C})$ with
$\underline{\mathrm{End}}_{\Lambda_{i,c}}(M)\cong k$, then 
$M$ is isomorphic to $\Omega^j(X)$ for some integer $j$
and $\mathrm{Ext}^1_{\Lambda_{i,c}}(M,M)\cong k$.
\item[(iii)] Let $Y=S_1$ if $i=1$ $($resp. let $Y=S_{001}$ if $i=2$$)$ and 
suppose $\mathfrak{C}$ contains $Y$.
Then $\mathfrak{C}$ is a $3$-tube with $Y$ belonging to its boundary, and 
$\mathfrak{C}=\Omega(\mathfrak{C})$. If $M$ lies in $\mathfrak{C}$ with
$\underline{\mathrm{End}}_{\Lambda_{i,c}}(M)\cong k$,
then $M\in\{Y,\Omega(Y),\Omega^2(Y)\}$ up to 
isomorphism, and $\mathrm{Ext}^1_{\Lambda_{i,c}}(M,M)= 0$.
\item[(iv)] Let $\lambda\in k$ and let $\mathfrak{T}^{(\lambda)}$ be the component of the 
stable Auslander-Reiten quiver of $\Lambda_{i,c}$ containing $S^{(\lambda)}_{010}$.
Then $\mathfrak{T}^{(\lambda)}$ is a one-tube with $S^{(\lambda)}_{010}$ belonging to its
boundary, and
$\underline{\mathrm{End}}_{\Lambda_{i,c}}(S^{(\lambda)}_{010})\cong k$ if and only if $c=1$. 
If $c=1$ and $M$ lies in $\mathfrak{T}^{(\lambda)}$ with
$\underline{\mathrm{End}}_{\Lambda_{i,1}}(M)\cong k$,
then $M\cong S^{(\lambda)}_{010}$ and 
$\mathrm{Ext}^1_{\Lambda_{i,1}}(M,M)\cong k$.
\end{enumerate}
If $c=0$ $($resp. $c=1$$)$, then the only components of the stable Auslander-Reiten quiver of 
$\Lambda_{i,c}$ containing modules whose stable endomorphism rings are isomorphic to $k$ 
are the ones in $(i) - (iii)$ $($resp. $(i) - (iv)$$)$.
\end{prop}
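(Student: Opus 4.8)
The plan is to work entirely inside the combinatorics of string and band modules for the string algebra $\Lambda_{i,c}/\mathrm{soc}$, using the classification recalled at the start of Section \ref{s:stablend} together with the standard machinery for computing homomorphisms and $\mathrm{Ext}^1$ between string/band modules via \textit{graph maps} and \textit{arrow maps} (as developed in the references to \cite{buri}, and which will be reviewed in Section \ref{s:stringband}). The first step is to reduce the statement to a manageable finite list: since $\Lambda_{i,c}$ is a symmetric special biserial algebra, every non-projective indecomposable is a string or band module, and a module $M$ with $\underline{\mathrm{End}}(M)\cong k$ is indecomposable and non-projective, so $M$ is one of these. Band modules $M(B,\lambda,n)$ with $n\ge 2$ always have $\underline{\mathrm{End}}(M)\not\cong k$ (the endomorphism ring is non-local or at least bigger than $k$ after quotienting by projectives), so only $n=1$ band modules survive, and these are exactly the $S^{(\lambda)}_{010}$ of Definition \ref{def:modules}(ii); handling these is case (iv). For string modules, the key is that a string module sits in an AR-component whose tree class and shape is known from the general theory of special biserial algebras; so it suffices to identify, for each relevant component, which strings $w$ give $\underline{\mathrm{End}}(M(w))\cong k$.

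Next I would organize the string modules by AR-component. Because $\Lambda_{i,c}$ has only two simple modules and a very rigid quiver, the stable AR-quiver has very few components meeting the "small" modules: one argues that any component $\mathfrak{C}$ containing a module with endomorphism ring $k$ must contain one of $S_0$, $\Omega(S_0)$, $S_1$, $S_{001}$ — this is the content of the unnumbered first assertion of the proposition and I would prove it by a short analysis of the possible string combinatorics near a brick, essentially showing that a string $w$ with $\underline{\mathrm{End}}(M(w))\cong k$ must be "short" in a precise sense (no returning substring forcing a nonscalar endomorphism), and such short strings are precisely $\beta$, $\gamma$, $\beta\alpha$, $\alpha\gamma$ and their $\Omega$-translates, whose string modules are the four named ones. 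Then, component by component: in case (i) (the $\mathbb{Z}A_\infty^\infty$ component through $S_0$), I would exhibit the strings for every module in the component, compute $\underline{\mathrm{End}}$ and $\mathrm{Ext}^1$ directly via graph/arrow maps, and show both are $k$ uniformly — here the point is that $S_0$ is the simple at the vertex with the loop and its syzygies stay "thin". In cases (ii) and (iii), the dichotomy between $i=1$ and $i=2$ swaps the roles of $S_1$ and $S_{001}$; I would compute the $\Omega$-orbit of $X=S_{001}$ (resp.\ $S_1$) and show it is non-periodic with a $\mathbb{Z}A_\infty^\infty$ component, and that the only bricks in it are the syzygies of $X$ (every other string in the component has a proper factor-sub overlap giving a nonscalar stable endomorphism), while for $Y=S_1$ (resp.\ $S_{001}$) the $\Omega$-orbit has length $3$ sitting at the mouth of a $3$-tube, with $\mathrm{Ext}^1(Y,Y)=0$ coming from the fact that the relevant string has no "overlap on the correct side". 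The $3$-tube shape and $\Omega^3 Y\cong Y$ I would get from the known list of tubes for dihedral blocks (Erdmann) combined with the explicit syzygy computation.

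Finally, for case (iv) I would compute $\Omega(S^{(\lambda)}_{010})$ explicitly using the identity already recorded in Definition \ref{def:modules}(ii): $\Omega(S^{(\lambda)}_{010})\cong S^{(\lambda)}_{010}$ when $c=0$ and $\cong S^{(1-\lambda)}_{010}$ when $c=1$. The endomorphism ring computation is where the parameter $c$ enters decisively: the stable endomorphism ring of $S^{(\lambda)}_{010}$ is $k$ precisely when there is no non-scalar endomorphism surviving modulo projectives, and one checks that the "obvious" non-scalar endomorphism (coming from the band combinatorics, involving the substring $\alpha$ twice) is killed in the stable category exactly when the relation $\alpha^2 = c(\gamma\beta\alpha)^{2^{d-2}}$ (resp. $\alpha^2 = c\,\gamma\beta\alpha$) is nontrivial, i.e.\ $c=1$. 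The last clause of the proposition — that $(i)$–$(iii)$ exhaust the relevant components when $c=0$ and $(i)$–$(iv)$ when $c=1$ — then follows by combining the reduction step (any such component contains one of the four named modules) with the case analysis (each of the four named modules lies in exactly one of the listed components). I expect the main obstacle to be the brick/self-extension bookkeeping in cases (i)–(iii): one must carefully enumerate, for every module in an infinite $\mathbb{Z}A_\infty^\infty$ component, the graph maps to and from it, and verify uniformly that only the $\Omega$-orbit of the distinguished module yields $\underline{\mathrm{End}}\cong k$; this is routine in principle but requires a clean inductive description of the strings appearing in each $\Omega$-orbit, which is precisely what Section \ref{s:stringband} is set up to provide.
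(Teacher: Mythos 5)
Your overall strategy---classify everything by brute force inside the string/band combinatorics---is viable, but it diverges from the paper's route in ways worth recording, and it has one concrete logical gap.

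On the difference of route: the paper does \emph{not} carry out the combinatorial computation uniformly. For $i=2$, $c=0$ it imports the case $d=3$ wholesale from the $kS_4$ paper \cite{blello}, and for the crucial component through $S_0$ it uses the fact (from \cite{parameter}) that $\Lambda_{2,0}$ is Morita equivalent to the principal block of $k\,\mathrm{PGL}_2(\mathbb{F}_q)$ and proves that the corresponding modules are \emph{endo-trivial}, which gives $\underline{\mathrm{End}}\cong k$ and $\mathrm{Ext}^1\cong k$ for the whole component at once via group cohomology. Only in the case $i=2$, $c=1$ (where no such group is available) does the paper do what you propose: an explicit inductive factoring argument with the maps $\mu_{a,b}$ on canonical bases, showing each non-scalar endomorphism of $M((\alpha\beta^{-1}\gamma^{-1})^n)$ factors through copies of $P_0$. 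Finally, the entire case $i=1$ is transferred from $i=2$ by the derived equivalence of Holm plus Rickard's theorem that this yields a stable equivalence of Morita type identifying the stable AR-quivers. Your plan to treat $i=1$ directly is not wrong, but the strings there involve words like $(\gamma\beta\alpha)^{2^{d-2}-1}$ and the bookkeeping is substantially heavier; the equivalence is a genuine labor-saving device, and it is also how the paper pins down the \emph{shapes} of the $i=1$ components rather than recomputing them.

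The gap is in your justification of the final clause. Your reduction step concerns components containing a \emph{brick} (ordinary endomorphism ring $k$), and indeed the bricks are exactly the six short uniserial modules, so that part of the first assertion is fine. But the final clause asserts that components (i)--(iii) (resp.\ (i)--(iv)) exhaust all components containing a module with \emph{stable} endomorphism ring $k$, and this does not follow from the brick classification: a component can contain a module with $\underline{\mathrm{End}}\cong k$ and no brick at all. The one-tubes $\mathfrak{T}^{(\lambda)}$ are exactly such components when $c=1$ (the endomorphism ring of $S^{(\lambda)}_{010}$ has dimension $2$, yet its stable endomorphism ring is $k$), and your own text elsewhere asserts, falsely if read with $\underline{\mathrm{End}}$, that strings with stable endomorphism ring $k$ must be short---part (i) gives arbitrarily long counterexamples. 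To close the gap you must do what the paper does (following \cite[Sect.~7]{blello}): run through \emph{all} strings and bands, not just the short ones, and show directly that any $M(w)$ with $\underline{\mathrm{End}}(M(w))\cong k$ already lies in one of the listed components. Your separate treatment of band modules handles the tubes with $\lambda\neq 0$, but the remaining $\mathbb{Z}A_\infty^\infty$ components of string modules (of which there are infinitely many outside the list) and the string one-tube still have to be excluded by a direct argument.
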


\begin{proof}
Considering the description of all non-projective indecomposable 
$\Lambda_{i,c}$-modules as string and band modules, it is straightforward to see that
for both $i=1$ and $i=2$ and all $d\ge 3$, a complete list of $\Lambda_{i,c}$-modules whose 
endomorphism rings are isomorphic to $k$ is given by the uniserial modules
$$S_0,\;S_1,\;S_{01},\;S_{10},\;S_{001},\:S_{100}.$$
Note that the uniserial modules of length $2$ lie in the  component of the stable Auslander-Reiten
quiver of $\Lambda_{i,c}$ containing $S_0$ if $i=1$ (resp. $\Omega(S_0)$ if $i=2$).
For $i\in\{1,2\}$, $S_{100}=\Omega^{-2}(S_{001})$.

Let first $i=2$ and $c=0$. If $d=3$, then $\Lambda_{2,0}$ is Morita equivalent to $kS_4$.
Therefore, Proposition \ref{prop:stablend} follows from \cite[Thm. 1.1]{blello}. 
Let now $d>3$. Going through
the arguments for $kS_4$ in \cite[Sect. 7]{blello}, it is not hard to see that we can 
generalize them to $d>3$. The only difference is that for $d>3$, strings and bands can contain
subwords of the form $\eta,\eta^2,\eta^3,\ldots,\eta^{2^{d-2}-1}$, whereas for $d=3$ only $\eta$
can occur as such a subword. It follows that the
only components of the stable Auslander-Reiten quiver of $\Lambda_{2,0}$
that possibly contain an indecomposable module whose stable endomorphism ring is isomorphic to $k$
are either the unique $3$-tube, or the components containing a simple module $S$ or its
syzygy $\Omega(S)$. 
Similarly to the proof of \cite[Lemma 6.1]{blello},
it follows 
that the only modules in the
$3$-tube whose stable endomorphism rings are isomorphic to $k$ lie at its boundary. 
Since these modules correspond to
string modules of maximal directed strings, we see that these modules
are isomorphic to $S_{001}$, $S_{100}$ or the
uniserial module of length $2^{d-2}$ whose composition factors are all isomorphic
to $S_1$. Using the projective module $P_0$, we see that 
$\mathrm{Ext}^1_{\Lambda_{2,0}}(S_{001},S_{001})
\cong \underline{\mathrm{Hom}}_{\Lambda_{2,0}}(S_{100},S_{001}) = 0$.
This proves part (iii).
If $\mathfrak{C}$ contains $S_1$ or $\Omega(S_1)$, it follows similarly to the proof of
\cite[Lemma 5.1]{blello} that the only modules in $\mathfrak{C}$ whose stable
endomorphism rings are isomorphic to $k$ are the modules in the $\Omega$-orbit of $S_1$. 
Using the projective module $P_1$, we see that 
$\mathrm{Ext}^1_{\Lambda_{2,0}}(S_1,S_1)
\cong \mathrm{Hom}_{\Lambda_{2,0}}(\Omega(S_1),S_1)\cong k$. 
This prove part (ii).
Finally consider the components $\mathfrak{C}$ containing $S_0$ and $\Omega(S_0)$.
By \cite[Cor. 4]{parameter}, $\Lambda_{2,0}$ is Morita equivalent to the principal block $B_0$ of 
$kH$ when $H$ is the projective general linear group $\mathrm{PGL}_2(\mathbb{F}_q)$,  
where $q\equiv 3\mod 4$ and $2^d$ is the maximal $2$-power dividing $(q^2-1)$.
Hence to prove that the stable endomorphism ring of each $\Lambda_{2,0}$-module $M$ in 
$\mathfrak{C}\cup\Omega(\mathfrak{C})$ is isomorphic to $k$, it suffices to prove that
the indecomposable $B_0$-module $V$ corresponding to each such $M$ under the Morita equivalence
is an endo-trivial $kH$-module.
This follows using similar arguments as in the
proof of \cite[Lemma 4.1]{blello}. Since $\mathrm{End}_k(V)$, viewed as a $kH$-module, is stably
isomorphic to the trivial simple $kH$-module $k$, it follows that
$\mathrm{Ext}_{B_0}^1(V,V)\cong \HH^1(H,\mathrm{End}_k(V))\cong
\HH^1(H,k)\cong \mathrm{Ext}_{B_0}^1(k,k)$ is isomorphic to 
$\mathrm{Ext}^1_{\Lambda_{2,0}}(S_0,S_0)\cong k$.
This proves part (i) and completes the proof of Proposition \ref{prop:stablend} when $c=0$ and $i=2$.

Next suppose that $i=2$ and $c=1$. We can organize the string and band modules
for $\Lambda_{2,1}$ in the same manner as for $\Lambda_{2,0}$ above. The only difference
to the above case is that we need to be careful when using the projective module $P_0$ in
our arguments. Going through the arguments in \cite[Sect. 7]{blello} in this way, we see that the
only components of the stable Auslander-Reiten quiver of $\Lambda_{2,1}$ that possibly 
contain an indecomposable module whose stable endomorphism ring is isomorphic to $k$
are either the unique $3$-tube, or the components containing a simple module $S$ or its
syzygy $\Omega(S)$, or the one-tubes $\mathfrak{T}^{(\lambda)}$, $\lambda\in k$, described in
part (iv) of Proposition \ref{prop:stablend}. For the $3$-tube and the stable Auslander-Reiten 
components containing $S_1$ and $\Omega(S_1)$ we can adjust the arguments used
in the case when $i=2$ and $c=0$ to prove parts (ii) and (iii). 
Suppose now that $\lambda\in k$. If $\lambda\neq 0$, then it follows from \cite{krau} 
(see also Remark \ref{rem:bandhoms}) that
the stable endomorphism ring of each module in $\mathfrak{T}^{(\lambda)}$ that does not lie at the 
boundary (i.e. that is not isomorphic to $S_{010}^{(\lambda)}$) is not isomorphic to $k$. 
If $\lambda=0$, then the modules that do not lie at the boundary
of $\mathfrak{T}^{(0)}$ (i.e. that are not isomorphic to $S_{010}^{(0)}$)
are string modules corresponding to the strings
$\beta^{-1}\gamma^{-1}(\alpha\beta^{-1}\gamma^{-1})^n$ for $n\ge 1$. All of these have a non-zero
endomorphism whose image is isomorphic to $S^{(0)}_{010}$ and which does not factor through a 
projective module. Consider now $S^{(\lambda)}_{010}$ for $\lambda\in k$. 
The endomorphism ring of $S^{(\lambda)}_{010}$
has $k$-dimension $2$. Since $\Omega(S^{(\lambda)}_{010})\cong S^{(1-\lambda)}_{010}$, it
follows that the non-zero endomorphisms of $S^{(\lambda)}_{010}$ whose image is 
isomorphic to $S_0$
factor through $P_0$. Hence the stable endomorphism ring of $S^{(\lambda)}_{010}$ is
isomorphic to $k$. On the other hand, the non-zero homomorphisms from 
$\Omega(S^{(\lambda)}_{010})\to S^{(\lambda)}_{010}$ whose images
are isomorphic to $S_0$ do not factor through $P_0$, which implies that
$\mathrm{Ext}^1_{\Lambda_{2,1}}(S^{(\lambda)}_{010},S^{(\lambda)}_{010})\cong k$.
This proves part (iv).
Finally consider the stable Auslander-Reiten component $\mathfrak{C}_0$ containing $S_0$.
Since we cannot use the same arguments as in the case when $i=2$ and $c=0$,
we need to use the description of the homomorphisms between string modules as described
in \cite{krau} (see Remark \ref{rem:stringhoms}) to prove the statements in part (i).
Using hooks and cohooks (see Definition \ref{def:arcomps}), 
we see that all the modules in $\mathfrak{C}_0\cup\Omega(\mathfrak{C}_0)$
lie in the $\Omega$-orbits of the following string modules:
\begin{equation}
\label{eq:c0mods}
S_0,\quad M((\alpha\beta^{-1}\gamma^{-1})^n),\quad  M((\alpha^{-1}\gamma\beta)^n),
\quad n\ge 1.
\end{equation}
We have $\underline{\mathrm{End}}_{\Lambda_{2,1}}(S_0)\cong k$ and 
$\mathrm{Ext}^1_{\Lambda_{2,1}}(S_0,S_0)\cong k$. For $n\ge 1$,
let $M_n=M((\alpha\beta^{-1}\gamma^{-1})^n)$ and let $\{x_r\}_{r=0}^{3n}$ be a
canonical $k$-basis for $M_n$ relative to the representative $(\alpha\beta^{-1}\gamma^{-1})^n$ (see 
Definition \ref{def:strings}). Let $h_a=x_{3a-2}$ for $1\le a\le n$ and let $f_b=x_{3b}$ for $0\le b\le n$.
Then $h_1,\ldots, h_n$ (resp. $f_0,\ldots,f_n$) generate $M_n/\mathrm{rad}(M_n)$ 
(resp. $\mathrm{soc}(M_n)$). By Remark \ref{rem:stringhoms}, each endomorphism of $M_n$ is a 
$k$-linear combination 
of the identity morphism and the endomorphisms $\mu_{a,b}$ ($1\le a\le n,0\le b\le n$)
where $\mu_{a,b}$ sends $h_a$ to $f_b$ and all other $x_r$ to zero. Considering the projective
module $P_0$, it follows that $\mu_{a,b}+\mu_{a,b+1}+\mu_{a+1,b+1}$ factors through $P_0$
for $1\le a\le n-1,0\le b\le n-1$.  Moreover, $\mu_{1,b}$ factors through $P_0$ for $1\le b\le n$ and 
$\mu_{n,b}+\mu_{n,b+1}$ factors through $P_0$ for $0\le b\le n-1$. 
Combining these endomorphisms in a suitable way and using an inductive argument, 
we see that $\mu_{a,b}$ factors through
a direct sum of copies of $P_0$ for all $1\le a\le n,0\le b\le n$, which implies that 
$\underline{\mathrm{End}}_{\Lambda_{2,1}}(M_n)\cong k$. Since 
$\mathrm{Ext}^1_{\Lambda_{2,1}}(M_n,M_n)\cong \underline{\mathrm{Hom}}_{\Lambda_{2,1}}(
\Omega(M_n),M_n)$, we also have to consider 
$\Omega(M_n)=M(\gamma^{-1}(\alpha\beta^{-1}\gamma^{-1})^{n-1})$. 
Note that $\Omega(M_n)$ can be identified with the 
$\Lambda_{2,1}$-submodule of $M_n$ having $\{x_r\}_{r=2}^{3n}$ as a $k$-basis. 
Let $h'_1=x_2$ and $h'_a=x_{3a-2}$ for $2\le a\le n$,
so that $h'_1,\ldots,h'_n$ generate $\Omega(M_n)/\mathrm{rad}(\Omega(M_n))$.
By  Remark \ref{rem:stringhoms}, each homomorphism $\Omega(M_n)\to M_n$ is a $k$-linear 
combination of the inclusion homomorphism $\Omega(M_n)\hookrightarrow M_n$
and the homomorphisms $\mu'_{a,b}$ ($2\le a\le n,0\le b\le n$) where 
$\mu'_{a,b}$ sends $h'_a$ to $f_b$ and all other basis elements to zero. Using a similar
argument as for the homomorphisms $\mu_{a,b}$ above, we see that
$\mu'_{a,b}$ factors through a direct sum of copies of $P_0$ for $2\le a\le n,0\le b\le n$. 
Hence $\mathrm{Ext}^1_{\Lambda_{2,1}}(M_n,M_n)\cong k$ for all $n\ge 1$. 
The remaining modules $M((\alpha^{-1}\gamma\beta)^n)$
in $(\ref{eq:c0mods})$ are treated similarly to $M_n$. This proves part (i) and completes the proof of
Proposition \ref{prop:stablend} when $i=2$ and $c=1$.

Suppose now that $i=1$ and $c\in\{0,1\}$.
By \cite[Prop. 3.1]{holm}, for fixed $d\ge 3$, $\Lambda_{1,c}$ and $\Lambda_{2,c}$ are derived 
equivalent. 
By \cite[Cor. 5.5]{rickard1}, this means that there is a stable equivalence of Morita type 
between $\Lambda_{1,c}$ and $\Lambda_{2,c}$. In particular, this 
stable equivalence commutes with $\Omega$ and identifies the stable Auslander-Reiten
quivers of $\Lambda_{1,c}$ and $\Lambda_{2,c}$.  
Thus to complete the proof when 
$i=1$, we need to find the components of the stable Auslander-Reiten quiver of 
$\Lambda_{1,c}$ that correspond under this stable equivalence to the
components of the stable Auslander-Reiten quiver of $\Lambda_{2,c}$
containing modules whose stable endomorphism rings are isomorphic to $k$.
Since the projective $\Lambda_{1,c}$-module cover $P_1$ of $S_1$ is uniserial,
it follows that $S_1$ lies at the boundary of the unique $3$-tube of the
stable Auslander-Reiten quiver of $\Lambda_{1,c}$, which proves part (iii). Let $\mathfrak{C}$ be
a component of the stable Auslander-Reiten quiver of $\Lambda_{1,c}$
containing $Y=S_{001}$.
Since $\mathfrak{C}$ is of type $\mathbb{Z}A_\infty^\infty$ and contains
modules whose stable endomorphism rings have $k$-dimension at least two,
$\mathfrak{C}$ must correspond to one of the components of the stable 
Auslander-Reiten quiver of $\Lambda_{2,c}$ containing $S_1$ and $\Omega(S_1)$. 
This proves part (ii).
If $\mathfrak{C}$ is the component of the stable Auslander-Reiten 
quiver of $\Lambda_{1,c}$ containing $S_0$ (resp. $\Omega(S_0)$), then
$\mathfrak{C}$ is of type $\mathbb{Z}A_\infty^\infty$. Since
we have matched up the components of this type for part (ii), $\mathfrak{C}$ 
must correspond to one of the components of the stable 
Auslander-Reiten quiver of $\Lambda_{2,c}$ containing $S_0$ and $\Omega(S_0)$. 
This proves part (i).
If $c=1$, we also need to find the one-tubes that contain $\Lambda_{1,1}$-modules whose 
stable endomorphism rings are isomorphic to $k$.  Considering all indecomposable 
$\Lambda_{1,1}$-modules lying in one-tubes,
we see that the only such modules whose stable endomorphism rings are possibly isomorphic to $k$
are the modules $S^{(\lambda)}_{010}$ for  $\lambda\in k$. 
Since $\Omega(S^{(\lambda)}_{010})\cong S^{(1-\lambda)}_{010}$
for all $\lambda\in k$, it follows that the stable endomorphism ring of $S^{(\lambda)}_{010}$
is isomorphic to $k$ for all $\lambda\in k$.
This proves part (iv) and completes the proof of Proposition \ref{prop:stablend} when $i=1$ and 
$c\in\{0,1\}$.
\end{proof}


\section{Universal deformation rings}
\label{s:udr}

Assume that $k$, $W$, $F$, $d$, $G$, $B$ and $D$ are as in Hypothesis \ref{hyp:throughout}.
Let $i\in\{1,2\}$ and $c\in\{0,1\}$ such that $B$ is Morita equivalent to $\Lambda_{i,c}$.
As before, $T_0$ (resp. $T_1$) is the simple $B$-module that under this Morita equivalence 
corresponds to the simple $\Lambda_{i,c}$-module $S_0$ (resp. $S_1$).

In this section, we determine the universal deformation rings of all finitely generated indecomposable
$kG$-modules $V$ which belong to $B$ and whose stable endomorphism rings are isomorphic to $k$.
In particular, this will prove Theorem \ref{thm:bigmain}.
We use Proposition \ref{prop:stablend} which, using the Morita equivalence between 
$\Lambda_{i,c}$ and $B$, gives a precise description of these modules.


\subsection{The stable Auslander-Reiten components of $B$ containing $T_0$  and 
$\Omega(T_0)$}
\label{ss:c0}

Let $\mathfrak{C}_0$ be the stable Auslander-Reiten component of $B$ containing 
$T_0$. By Proposition \ref{prop:stablend}(i), the stable endomorphism ring of every module
in $\mathfrak{C}_0\cup\Omega(\mathfrak{C}_0)$ is isomorphic to $k$.
The component $\mathfrak{C}_0$ looks as in Figure \ref{fig:c0comp}, 
where $A_0=T_0$ and for $n\ge 1$, the $B$-module $A_n$ (resp. $A_{-n}$) 
corresponds under the Morita equivalence between $\Lambda_{i,c}$ and $B$  to the 
$\Lambda_{i,c}$-string module $M(C_{i,n})$ (resp. $M(C_{i,-n})$) with
\begin{eqnarray}
\label{eq:c0strings1}
&C_{1,n}=\left(\alpha(\beta^{-1}\gamma^{-1}\alpha^{-1})^{2^{d-2}-1}\beta^{-1}\gamma^{-1}\right)^n,
\quad
C_{1,-n}=\left(\alpha^{-1}(\gamma\beta\alpha)^{2^{d-2}-1}\gamma\beta\right)^n,&\\
\label{eq:c0strings2}
&C_{2,n}=(\alpha\beta^{-1}\gamma^{-1})^n,\quad 
C_{2,-n}=(\alpha^{-1}\gamma\beta)^n.&
\end{eqnarray}
\begin{figure}[ht] \hrule \caption{\label{fig:c0comp} The stable Auslander-Reiten 
component $\mathfrak{C}_0$ near $T_0=A_0$.}
$$\xymatrix @-1.2pc{
&&&&&&\\
&\;\Omega^4(A_2)\;\ar[rd]\ar@{.}[lu]\ar@{.}[ld]\ar@{.}[ru]&&
\;\Omega^2(A_2)\;\ar[rd]\ar@{.}[ru]\ar@{.}[lu]
&&\;A_2\;\ar@{.}[ru]\ar@{.}[rd]\ar@{.}[lu]&\\
&&\;\Omega^2(A_1)\;\ar[rd]\ar[ru]&&\;A_1\;\ar[rd]\ar[ru]&&\\
&\;\Omega^2(A_0)\; \ar[ru]\ar[rd]\ar@{.}[lu]\ar@{.}[ld]&&\;A_0\;\ar[ru]\ar[rd]&&\;\Omega^{-2}(A_0)\;\ar@{.}[ru]\ar@{.}[rd]&\\
&&A_{-1}\ar[ru]\ar[rd]\ar@{.}[ld]&&\Omega^{-1}(A_{-1})\ar[ru]\ar[rd]&&\\
&A_{-2}\ar[ru]\ar@{.}[ld]\ar@{.}[lu]\ar@{.}[rd]&&
\Omega^{-2}(A_{-2})\ar[ru]\ar@{.}[rd]\ar@{.}[ld]
&&\Omega^{-4}(A_{-2})\ar@{.}[rd]\ar@{.}[ru]\ar@{.}[ld]&\\
&&&&&&\\&&&&&&
}$$
\hrule
\end{figure}
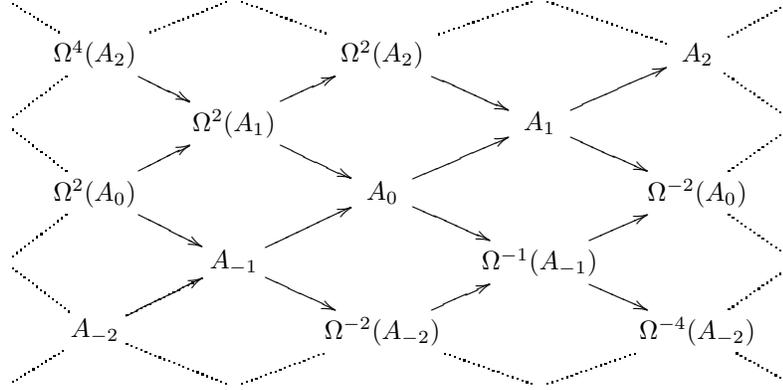

\begin{lemma}
\label{lem:liftingc0}
For all $n\in\mathbb{Z}$, the $B$-module $A_n$  from Figure $\ref{fig:c0comp}$
has $2$ non-isomorphic lifts over $W$.
\end{lemma}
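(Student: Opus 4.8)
The plan is to count lifts of $A_n$ over $W$ by first reducing to an explicit calculation with the basic algebra $\Lambda_{i,c}$ and the string module $M(C_{i,n})$, and then lifting the string-module presentation. Since $A_n$ corresponds under the Morita equivalence to the string module $M(C_{i,n})$, and since a lift of $V$ over $W$ is an $WG$-lattice reducing to $V$ which is free over $W$, I would first observe that it suffices to produce and classify, up to isomorphism, the $W$-free modules over a $W$-order lifting $\Lambda_{i,c}$ (a "lift" of the basic algebra) which reduce mod $2$ to $M(C_{i,n})$. Concretely, one writes down the representation of $Q_i$ on the canonical $k$-basis $\{x_r\}$ of $M(C_{i,n})$ dictated by the string $C_{i,n}$ in \eqref{eq:c0strings1}, \eqref{eq:c0strings2}: each arrow of $Q_i$ acts by a $0/1$ matrix sending one basis vector to the next (or to zero) according to whether the letter or its inverse occurs. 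A lift over $W$ amounts to deforming these matrices to matrices over $W$ so that all relations in $I_{i,c}$ are satisfied and the reduction mod $2$ recovers the original string representation; isomorphism of lifts is conjugacy by a matrix in $\mathrm{GL}$ over $W$ reducing to a suitable automorphism.

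The key steps, in order, are: (1) recall from Proposition~\ref{prop:stablendudr} and Proposition~\ref{prop:stablend}(i) that $\underline{\mathrm{End}}_{\Lambda_{i,c}}(A_n)\cong k$, so that deformations and lifts coincide (Remark~\ref{rem:deformations}) and $R(G,A_n)$ is well-defined; in particular isomorphism classes of lifts over $W$ are in bijection with $\mathrm{Hom}_{\mathcal C}(R(G,A_n),W)$, so "two non-isomorphic lifts" is the assertion that there are exactly two such $W$-algebra maps. (2) Compute the tangent space $\mathrm{Ext}^1_{\Lambda_{i,c}}(A_n,A_n)\cong k$, which is already recorded in Proposition~\ref{prop:stablend}(i); this shows $R(G,A_n)$ is a quotient of $W[[t]]$, so there are at most countably many lifts and they form a "line." (3) Exhibit explicitly two lifts: the first is obtained by lifting the $0/1$ string matrices verbatim to $W$ (the relations $\beta\gamma$, the commutativity relation, the $\alpha^2$-relation all involve only products of the given matrices and are checked directly — here the vanishing of the relevant long words on the string basis is exactly the content of the string being a legal string for $\Lambda_{i,c}$), and the second by a sign-twist, e.g. negating the matrix of $\alpha$ (or of one chosen arrow) on part of the basis, which also reduces to the original representation mod $2$ but is not conjugate to the first lift over $W$ because the conjugating matrix would have to reduce mod $2$ to something incompatible with $\underline{\mathrm{End}}\cong k$. (4) Prove there are no further lifts: combine the fact that any lift is classified by a point of $\mathrm{Spec}\,R(G,A_n)(W)$ with a direct analysis showing that the action of the class sum $t(C)$ (or equivalently a central element / the relation-defining long words) on any lift must take one of exactly two scalar values in $W$, forcing exactly two lifts. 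The most economical route here is to identify, using the decomposition matrix in Figure~\ref{fig:decomp} and the ordinary characters $\chi_1,\dots,\chi_4$, which ordinary characters the $F$-span of a lift of $A_n$ can afford; since the composition-factor multiplicities of $A_n$ are small and fixed, only two compatible sign choices remain.

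The main obstacle I expect is step (4): showing that the two exhibited lifts are \emph{all} of them, rather than just two among possibly many. Ruling out extra lifts requires controlling $R(G,A_n)$ tightly — knowing $\mathrm{Ext}^1\cong k$ only gives $R(G,A_n)$ as a quotient of $W[[t]]$, and one must further show the quotient is small enough (e.g. that $2t=0$ and $t^2=0$, or that $t$ is killed outright) so that exactly two $W$-points survive. I would handle this either by a careful obstruction calculation in $\mathrm{Ext}^2$/Massey products, or — more in the spirit of the surrounding sections — by using the explicit string-combinatorics of $\Lambda_{i,c}$ together with the projective cover $P_0$ (as in the proof of Proposition~\ref{prop:stablend}) to show that any deformation of the string matrices over a small Artinian ring is, after isomorphism, one of the two listed, and then passing to the limit over $W$. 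A subsidiary technical point is verifying that the two lifts are genuinely non-isomorphic over $W$ (not merely over $k$ or over $W/2$): this follows because an isomorphism would be a $W$-point of the difference, and reduction mod the maximal ideal would yield an automorphism of $A_n$ inducing the sign change, contradicting $\underline{\mathrm{End}}_{\Lambda_{i,c}}(A_n)\cong k$ together with the fact that the sign change does not lift to an honest (non-stable) automorphism.
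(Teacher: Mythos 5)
There is a genuine gap at the heart of your step (3), and that is the step carrying all the content of the lemma. A lift of $A_n$ over $W$ is a $WG$-module that is free over $W$, i.e.\ a lattice over the block of $WG$ corresponding to $B$; it is \emph{not} a module over ``a $W$-order lifting $\Lambda_{i,c}$'' obtained by naively lifting the quiver relations. That $W$-order is an order in a semisimple $F$-algebra whose simple components are prescribed by the ordinary characters $\chi_1,\dots,\chi_4,\chi_{5,j}$, and which mod $2$ representations admit lifts is governed by the decomposition matrix, not by the shape of the string. Indeed, the paper observes that the uniserial module with $2^{d-2}$ composition factors $T_1$ has \emph{no} lift over $W$, even though its string matrices lift ``verbatim'' in your sense. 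So lifting the $0/1$ matrices to $W$ and checking the relations of $I_{i,c}$ proves nothing about the existence of a $WG$-lattice reducing to $A_n$, and the sign-twisted variant has the same problem. Moreover, even formally, negating the matrix of a single arrow is typically undone by conjugation with a diagonal $\pm1$ matrix, and your argument that the two putative lifts are non-isomorphic (an isomorphism ``would reduce to an automorphism inducing the sign change'') does not work: the reduction of a $W$-isomorphism is simply some automorphism of $A_n$, e.g.\ the identity, and nothing forces a contradiction with $\underline{\mathrm{End}}_{kG}(A_n)\cong k$.

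What is actually needed, and what the paper does, is an inductive construction inside the category of $WG$-lattices: start from the two lifts of $A_0=T_0$ read off from the first column of the decomposition matrix (ordinary characters $\chi_1$ and $\chi_2$); produce $WG$-lattices $X$, $X'$ with characters $\chi_1+\chi_3$ and $\chi_2+\chi_4$ (resp.\ characters also involving the $\chi_{5,j}$ in the $\Lambda_{1,c}$ case) by cutting the projective lattice $P_{T_0}^W$ along ordinary constituents via \cite[Prop.~(23.7)]{CR}; and then lift the extensions $0\to A_{n-1}\to A_n\to k\otimes_W X\to 0$ step by step using a comparison of $\dim_F\mathrm{Hom}_{FG}$ with $\dim_k\mathrm{Hom}_{kG}$ as in \cite[Lemma~2.3.2]{3sim}. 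The two resulting lifts are non-isomorphic because their $F$-characters differ. Your steps (1) and (2) are fine as framing, and step (4) is not actually required for the lemma as it is used later (only the existence of two non-isomorphic lifts feeds into \cite[Lemma~2.1]{bc5}, together with the separate computation of $R(G,A_n)/2R(G,A_n)$); but without a character-theoretic or lattice-theoretic construction replacing step (3), the proof does not get off the ground.
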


\begin{proof}
If $B$ is Morita equivalent to $\Lambda_{2,c}$, the ideas to prove Lemma 
\ref{lem:liftingc0} are similar to the ones used to prove the corresponding fact in 
\cite[Lemma 4.2]{blello}. Since the proof of \cite[Lemma 4.2]{blello} uses some
explicit lifts of $kS_4$-modules over $W$, which we do not know how to establish
for more general blocks $B$, we provide a slightly different strategy, which will also work
if $B$ is Morita equivalent to $\Lambda_{1,c}$.

Suppose first that $B$ is Morita equivalent to $\Lambda_{2,c}$.
We will prove that for all $n\ge 0$, the module $A_n$ from Figure $\ref{fig:c0comp}$
has $2$ non-isomorphic lifts over $W$, the case of the modules $A_{-n}$ being similar. 
Let $P_{T_0}^W$ be a lift over $W$ of the projective $B$-module cover
${P_{T_0}}$ of $T_0$. It follows from
the decomposition matrix in Figure \ref{fig:decomp} that 
$F\otimes_W P_{T_0}^W = \Xi_1\oplus \Xi_2\oplus \Xi_3\oplus \Xi_4$ 
where $\Xi_j$ is a simple $FG$-module with $F$-character $\chi_j$ for $1\le j\le 4$. 
By \cite[Prop. (23.7)]{CR}, $X=P_{T_0}^W/(P_{T_0}^W\cap (\Xi_2\oplus \Xi_4))$ 
is a $WG$-module which is free over $W$  and has $F$-character $\chi_1+\chi_3$, 
and $X'=P_{T_0}^W/(P_{T_0}^W\cap (\Xi_1\oplus \Xi_3))$ is a $WG$-module
which is free over $W$  and has $F$-character $\chi_2+\chi_4$. Moreover, both
$k\otimes_W X$ and $k\otimes_W X'$ are indecomposable $kG$-modules
that are quotient modules of $P_{T_0}$ and have composition factors $T_0,T_0,T_1$. 
Since it follows from the decomposition matrix in Figure \ref{fig:decomp} and  
\cite[Prop. (23.7)]{CR} that the uniserial $kG$-module $T_{1\cdots 1}$ of length
$2^{d-2}$ whose composition factors are all isomorphic to
$T_1$ does not have a lift over $W$, neither $k\otimes_W X$ nor 
$k\otimes_W X'$ can be isomorphic to $\Omega(T_{1\cdots 1})$. 
This means that $k\otimes_W X$ (resp. $k\otimes_W X'$) corresponds either to
the uniserial  $\Lambda_{2,c}$-string module $M(\beta^{-1}\gamma^{-1})$, or to the 
$\Lambda_{2,c}$-string module $M(\alpha\beta^{-1})$, or to the 
$\Lambda_{2,c}$-band module $M(\alpha\beta^{-1}\gamma^{-1},\lambda,1)$
for some $\lambda\in k^*$. 
In any case, we have non-split short exact sequences of $kG$-modules
\begin{eqnarray}
\label{eq:anotherses1}
\xymatrix{
0\ar[r]& A_{n-1} \ar[r] & A_n \ar[r] & k\otimes_W X\ar[r]& 0},\\
\label{eq:anotherses2}
\xymatrix{
0\ar[r]& A_{n-1} \ar[r] & A_n \ar[r] & k\otimes_W X'\ar[r]& 0}.
\end{eqnarray}
It follows from the decomposition 
matrix in Figure \ref{fig:decomp} that $A_0=T_0$ has two non-isomorphic lifts 
$(U_{0,1}^W,\phi_{0,1})$ and 
$(U_{0,2}^W,\phi_{0,2})$ over $W$ whose $F$-characters are given by $\chi_1$ and $\chi_2$, 
respectively. Let $n\ge 1$.
Using that $A_0$ corresponds to the $\Lambda_{2,c}$-module $S_0$ and, for $n>1$, 
$A_{n-1}$ corresponds to the $\Lambda_{2,c}$-module
$M(C_{2,n-1})$, where $C_{2,n-1}$ is the string in $(\ref{eq:c0strings2})$, we can use
Remark \ref{rem:stringhoms} to prove
$$\mathrm{Hom}_{kG}(k\otimes_W X,A_{n-1})\cong k^{n}\cong 
\mathrm{Hom}_{kG}(k\otimes_W X',A_{n-1})$$ and
$$\mathrm{Ext}^1_{kG}(k\otimes_W X,A_{n-1})\cong k \cong
\mathrm{Ext}^1_{kG}(k\otimes_W X',A_{n-1}).$$
Assume by induction that $A_{n-1}$ has two lifts $(U_{n-1,1}^W,\phi_{n-1,1})$ and 
$(U_{n-1,2}^W,\phi_{n-1,2})$
over $W$ such that the $F$-character of  $U_{n-1,1}^W$ is
\begin{equation}
\label{eq:char1}
\left\{\begin{array}{ll} 
\chi_1+s(\chi_1+\chi_3) + s(\chi_2+\chi_4) & \mbox{ if }n-1=2s\\
\chi_1+ s(\chi_1+\chi_3) + (s+1)(\chi_2+\chi_4)& \mbox{ if }n-1=2s+1
\end{array}\right\},
\end{equation}
and the $F$-character of $U_{n-1,2}^W$ is 
\begin{equation}
\label{eq:char2}
\left\{\begin{array}{ll} 
\chi_2+s(\chi_1+\chi_3) + s(\chi_2+\chi_4) & \mbox{ if }n-1=2s\\
\chi_2+ (s+1)(\chi_1+\chi_3) + s(\chi_2+\chi_4)& \mbox{ if }n-1=2s+1
\end{array}\right\}.
\end{equation}
Then if $n-1=2s$, 
$$\mathrm{dim}_F\,\mathrm{Hom}_{FG}(F\otimes_WX',F\otimes_W U_{n-1,1}^W)
=s+s=2s=n-1,$$
and if $n-1=2s+1$,
$$\mathrm{dim}_F\,\mathrm{Hom}_{FG}(F\otimes_WX,F\otimes_W U_{n-1,1}^W)
=1+s+s=2s+1=n-1.$$
Similarly, if $n-1=2s$ then 
$\mathrm{dim}_F\,\mathrm{Hom}_{FG}(F\otimes_WX,F\otimes_W U_{n-1,2}^W)=n-1$, 
and if $n-1=2s+1$ then
$\mathrm{dim}_F\,\mathrm{Hom}_{FG}(F\otimes_WX',F\otimes_W 
U_{n-1,2}^W)=n-1$.
Hence by \cite[Lemma 2.3.2]{3sim}, $A_n$ has two non-isomorphic lifts 
$(U_{n,1}^W,\phi_{n,1})$ and $(U_{n,2}^W,\phi_{n,2})$ over $W$ whose $F$-characters are as in 
$(\ref{eq:char1})$ and $(\ref{eq:char2})$ when $n-1$ is replaced by $n$. 

Now suppose that $B$ is Morita equivalent to $\Lambda_{1,c}$. 
Again, we concentrate on the modules $A_n$ for $n\ge 0$, the case of the modules $A_{-n}$
being similar. We want to use a similar inductive argument as above. The tricky part
is to find modules that correspond to the above $X$ and $X'$ and which
provide short exact sequences of the form $(\ref{eq:anotherses1})$ and
$(\ref{eq:anotherses2})$. For this, we use a variation of \cite[Lemma 2.3.2]{3sim}. 
Consider the string
$$Z=(\beta^{-1}\gamma^{-1}\alpha^{-1})^{2^{d-2}-1}\beta^{-1}.$$
Analyzing the $\Lambda_{1,c}$-modules that are non-trivial extensions of $M(Z)$ 
by $S_0$, we see that they are isomorphic to either
$$M(Z\gamma^{-1}), \mbox{ or } M(\alpha Z), \mbox{ or } 
M(\alpha Z\gamma^{-1},\lambda,1) \mbox{ for $\lambda\in k^*$. }$$
Moreover, if $M$ is any of these modules, then for all $n\ge 1$,
there is a short exact sequence of $\Lambda_{1,c}$-modules
$$0\to M(C_{1,n-1})\to M(C_{1,n}) \to M\to 0,$$
where we set $M(C_{1,0})=S_0$. Let $Y$ be the $B$-module corresponding to
the $\Lambda_{1,c}$-module $M(Z)$. 
Using the projective $B$-module cover $P_{T_1}$ of $T_1$, 
it follows from the decomposition matrix for $B$ in Figure \ref{fig:decomp}
and \cite[Prop. (23.7)]{CR} that $Y$ has two non-isomorphic
lifts $(Y^W_1,\psi_1)$ and $(Y^W_2,\psi_2)$ over $W$ with $F$-characters 
$\chi_3+\sum_{j=1}^{2^{d-2}-1}\chi_{5,j}$
and $\chi_4+\sum_{j=1}^{2^{d-2}-1}\chi_{5,j}$, respectively. 
Also, $T_0$ has two non-isomorphic lifts $(U^W_{0,1},\phi_{0,1})$ and $(U^W_{0,2},\phi_{0,2})$ with
$F$-characters $\chi_1$ and $\chi_2$, respectively. This implies
that $\mathrm{Hom}_{FG}(F\otimes_W Y^W_\ell,F\otimes_W U^W_{0,\ell})=0$
for $\ell\in\{1,2\}$. Moreover, $\mathrm{Hom}_{kG}(Y,T_0)\cong k$.
Following the proof of \cite[Lemma 2.3.2]{3sim}, this implies that, for $\ell\in\{1,2\}$, the natural map
$$\mathrm{Ext}^1_{WG}(Y^W_\ell,U^W_{0,\ell})\to\mathrm{Ext}^1_{kG}(Y,T_0),$$
which is induced by reduction modulo $2$, has non-trivial image. 
Therefore, there exist $WG$-modules $X$ and $X'$ that are free over $W$
such that $k\otimes_W X$ and $k\otimes_W X'$ lie in
short exact sequences of the form $(\ref{eq:anotherses1})$ and
$(\ref{eq:anotherses2})$, respectively,
and such that the $F$-character of $X$ is equal to 
$\chi_1+\chi_3+\sum_{j=1}^{2^{d-2}-1}\chi_{5,j}$ and the $F$-character of $X'$
is equal to $\chi_2+\chi_4+\sum_{j=1}^{2^{d-2}-1}\chi_{5,j}$. We can now
proceed using a similar inductive argument as in the case when $B$ is
Morita equivalent to $\Lambda_{2,c}$ to complete the proof of Lemma \ref{lem:liftingc0}.
\end{proof}

\begin{prop}
\label{prop:udrc0}
Let $V$ be a $B$-module in $\mathfrak{C}_0\cup\Omega(\mathfrak{C}_0)$. 
Then $R(G,V)$ is isomorphic to a $W$-subalgebra of $W[\mathbb{Z}/2]$. 
If $B$ is a principal block, then $R(G,V)\cong W[\mathbb{Z}/2]$.
\end{prop}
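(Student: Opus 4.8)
\textbf{Proof proposal for Proposition \ref{prop:udrc0}.}
The plan is to reduce everything to the module $V_0 = A_n$ (or its syzygy) via Proposition \ref{prop:stablendudr}: since $V$ has stable endomorphism ring $k$, its universal deformation ring agrees with $R(G,V_0)$ for the unique non-projective indecomposable summand $V_0$, and $R(G,\Omega(V_0))\cong R(G,V_0)$, so we may assume $V = A_n$ for some $n\in\mathbb{Z}$, lying in $\mathfrak{C}_0$ itself. The key structural input is Lemma \ref{lem:liftingc0}, which tells us that $A_n$ has exactly $2$ non-isomorphic lifts over $W$. First I would use this to pin down the tangent space: since $\underline{\mathrm{End}}_{kG}(A_n)\cong k$ and (by Proposition \ref{prop:stablend}(i)) $\mathrm{Ext}^1_{kG}(A_n,A_n)\cong k$, the mod-$2$ cotangent space of $R(G,A_n)$ is one-dimensional, so $R(G,A_n)$ is a quotient of $W[[t]]$. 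Thus $R(G,A_n)\cong W[[t]]/J$ for some ideal $J$, and the task is to identify $J$.

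The main step is then to show that the two lifts over $W$ are realized by two distinct $W$-algebra maps $W[[t]]/J \to W$, i.e. that the versal deformation already "separates" over $W$, while at the same time the cotangent space has dimension $1$. Concretely, the two lifts $(U^W_{n,1},\phi_{n,1})$ and $(U^W_{n,2},\phi_{n,2})$ constructed in the proof of Lemma \ref{lem:liftingc0} are non-isomorphic, hence give two distinct points $W[[t]]/J \to W$; after an affine change of the coordinate $t$ we may assume these correspond to $t\mapsto 0$ and $t\mapsto 2$ (the two lifts differ only after reduction to $W/4$, by the character computations), whence $t(t-2)\in J$. On the other hand, $R(G,A_n)$ must surject onto $k[t]/(t^2)$ (realizing the nontrivial class in $\mathrm{Ext}^1$), and any finite quotient of $W[[t]]$ with $1$-dimensional cotangent space containing $t(t-2)$ and mapping onto $k[t]/(t^2)$ and onto $W$ (twice) is forced to be a $W$-subalgebra of $W\times W$; identifying $W\times W$ with $W[\mathbb{Z}/2]$ via the regular representation gives $R(G,A_n)\hookrightarrow W[\mathbb{Z}/2]$. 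I expect the delicate point here to be ruling out the possibility that $R(G,A_n)$ is larger than the subring generated by $t$ with $t^2 = 2t$ — i.e. showing there is no extra "$2$-torsion" deformation over $W/4$; this is exactly what the sharp count "$2$ lifts over $W$" in Lemma \ref{lem:liftingc0}, combined with $\dim_k\mathrm{Ext}^1 = 1$, is designed to exclude, via the standard obstruction-theory argument (as in \cite{bc,3sim,blello}).

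For the second assertion, suppose $B$ is a principal block. By Theorem \ref{thm:principalblocks}, $c=0$, and there is an involution $\tau\in G$ with $\mathrm{Res}^G_{\langle\tau\rangle}(X)\cong k\langle\tau\rangle$ for $X$ a uniserial $B$-module with composition factors $k,k$. The plan is to restrict the deformation problem to $\langle\tau\rangle\cong\mathbb{Z}/2$: since $A_n$ restricted to $\langle\tau\rangle$ is a sum of copies of $k\langle\tau\rangle$ and (at most) one trivial module — or more to the point, $\mathrm{Res}^G_{\langle\tau\rangle}$ induces a surjection on the relevant $\mathrm{Ext}^1$ — and since $k\langle\tau\rangle$ visibly lifts to $W\langle\tau\rangle = W[\mathbb{Z}/2]$ with universal deformation ring exactly $W[\mathbb{Z}/2]$, functoriality of universal deformation rings (restriction along $\langle\tau\rangle\hookrightarrow G$ gives a $W$-algebra map $R(G,A_n)\to R(\langle\tau\rangle,\mathrm{Res}\,A_n)$) produces a surjection $R(G,A_n)\twoheadrightarrow W[\mathbb{Z}/2]$. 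Combined with the embedding $R(G,A_n)\hookrightarrow W[\mathbb{Z}/2]$ from the first part, this forces $R(G,A_n)\cong W[\mathbb{Z}/2]$. The hard part will be making the restriction-to-$\langle\tau\rangle$ argument precise — specifically, checking that the natural map on tangent spaces $\mathrm{Ext}^1_{kG}(A_n,A_n)\to\mathrm{Ext}^1_{k\langle\tau\rangle}(\mathrm{Res}\,A_n,\mathrm{Res}\,A_n)$ is nonzero, which is where Theorem \ref{thm:principalblocks}(ii) (the explicit involution $\tau$ detecting the length-two uniserial module, hence the generator of $\mathrm{Ext}^1_{kG}(T_0,T_0)$) enters.
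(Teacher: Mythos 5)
Your overall architecture matches the paper's (reduce to $A_n$ via Proposition \ref{prop:stablendudr}, use the two lifts over $W$ from Lemma \ref{lem:liftingc0}, and restrict to the involution $\tau$ in the principal case), but there is a genuine gap in the first half. The existence of two non-isomorphic lifts over $W$ together with $\dim_k\mathrm{Ext}^1_{kG}(A_n,A_n)=1$ does \emph{not} force $R(G,A_n)$ into $W\times W$: for instance $W[[t]]/(t^2(t-2))$ and $W[[t]]/(2h(t))$ for suitable $h$ both have one-dimensional mod-$2$ cotangent space and admit two (or more) distinct morphisms to $W$, yet are not $W$-torsion-free of rank $2$. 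Your attempt to produce the relation $t(t-2)\in J$ from ``the two lifts differ only after reduction to $W/4$'' is not justified; Lemma \ref{lem:liftingc0} only \emph{constructs} two lifts, it does not count all of them, and no count of $W$-points can substitute for an upper bound on the ideal $J$. The indispensable step, which your proposal omits, is the computation $R(G,A_n)/2R(G,A_n)\cong k[t]/(t^2)$. In the paper this is done by exhibiting the explicit non-split self-extension $0\to A_n\to (P_{T_0})^n\oplus T_{00}\to A_n\to 0$, which realizes the surjection onto $k[t]/(t^2)$, and then ruling out a lift over $k[t]/(t^3)$ by a socle argument (any surjection $\overline{U}_n\to A_n$ kills $\mathrm{soc}((P_{T_0})^n)$, resp.\ $\mathrm{soc}(T_{00})$, so cannot restrict to an isomorphism on $t\cdot\overline{U}_n$); the case of $\Lambda_{1,c}$ is then deduced via the stable equivalence of Morita type with $\Lambda_{2,c}$. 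Only with $R/2R\cong k[t]/(t^2)$ in hand does the existence of two lifts over $W$ yield $R(G,A_n)\cong W[[t]]/(t(t-2\mu))$ (this is the content of the cited Lemma 2.1 of \cite{bc5}), hence a $W$-subalgebra of $W[\mathbb{Z}/2]$.

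In the principal-block part your strategy is the paper's, but the functorial map runs the wrong way: restriction of deformations gives a natural transformation $F^G_{A_n}\to F^{\langle\tau\rangle}_{\mathrm{Res}\,A_n}$, hence by Yoneda a morphism $R(\langle\tau\rangle,\mathrm{Res}\,A_n)\cong W[\mathbb{Z}/2]\to R(G,A_n)$, not a map $R(G,A_n)\to W[\mathbb{Z}/2]$. The nonvanishing of $\mathrm{Ext}^1_{kG}(A_n,A_n)\to\mathrm{Ext}^1_{k\langle\tau\rangle}(\mathrm{Res}\,A_n,\mathrm{Res}\,A_n)$ that you correctly identify as the key check shows this correctly-directed map is surjective; combined with the two $W$-points of $R(G,A_n)$ (whose closure in $\mathrm{Spec}(W[\mathbb{Z}/2])$ is everything) one concludes $R(G,A_n)\cong W[\mathbb{Z}/2]$. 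Note also that you need slightly more than $\mathrm{Res}^G_{\langle\tau\rangle}(T_{00})\cong k\langle\tau\rangle$: one must check, as the paper does from the shape of $P_{T_0}$ and $P_{T_1}$ in Figure \ref{fig:projs}, that $\mathrm{Res}^G_{\langle\tau\rangle}(T_1)$ is projective, so that $\mathrm{Res}^G_{\langle\tau\rangle}(A_n)\cong k\oplus(\mathrm{projective})$ and its universal deformation ring really is $W[\mathbb{Z}/2]$.
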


\begin{proof}
By Proposition \ref{prop:stablendudr}(ii), it suffices to prove Proposition \ref{prop:udrc0} when
$V=A_n$ for some $n\in\mathbb{Z}$, where $A_n$ is as in Figure \ref{fig:c0comp}.

Suppose that $B$ is Morita equivalent to $\Lambda_{2,c}$.
Let $n\ge 0$ and $V_n\in\{A_n,A_{-n}\}$.
We first prove that for all $n\ge 0$, $\overline{R}_n=R(G,V_n)/2R(G,V_n)$ is isomorphic to $k[t]/(t^2)$. 
From Proposition \ref{prop:stablend}(i) it follows that $\mathrm{Ext}^1_{kG}(V_n,V_n)\cong k$, 
which implies that $\overline{R}_n$ is isomorphic to a quotient algebra of $k[[t]]$.
Let $T_{00}$ be the uniserial $kG$-module with composition factors $T_0,T_0$
and let $P_{T_0}$ be the projective $B$-module cover of $T_0$.
Then we have a non-split short exact sequence of $kG$-modules
\begin{equation}
\label{eq:ses}
\xymatrix{
0\ar[r] & V_n \ar[r] & (P_{T_0})^n \oplus T_{00}\ar[r] & V_n\ar[r] &0}.
\end{equation}
Hence $\overline{U}_n=(P_{T_0})^n \oplus T_{00}$ is a free $k[t]/(t^2)$-module 
which defines a lift $(\overline{U}_n,\overline{\phi}_n)$ of $V_n$ over $k[t]/(t^2)$. Thus there is a unique 
$k$-algebra homomorphism
$\sigma:\overline{R}_n\to k[t]/(t^2)$ corresponding to $(\overline{U}_n,\overline{\phi}_n)$. 
Since $(\overline{U}_n,\overline{\phi}_n)$ is not the trivial lift of $V_n$ over
$k[t]/(t^2)$, $\sigma$ is surjective. We want to show that $\sigma$ is a $k$-algebra 
isomorphism. Suppose this is false. Then there exists a surjective $k$-algebra 
homomorphism $\sigma':\overline{R}_n\to k[t]/(t^3)$
such that $\pi\circ\sigma'=\sigma$ where $\pi:k[t]/(t^3)\to k[t]/(t^2)$ is the natural projection. Let 
$(\overline{E}_n,\eta_n)$ be a lift of $V_n$ over $k[t]/(t^3)$ relative to $\sigma'$. Then 
$k[t]/(t^2)\otimes_{\pi}\overline{E}_n\cong \overline{U}_n$ and
$t^2\overline{E}_n\cong V_n$. Thus we have a short exact sequence of
$k[t]/(t^3)G$-modules
\begin{equation}
\label{eq:thes}
0\to t^2\overline{E}_n\to \overline{E}_n\to \overline{U}_n\to 0.
\end{equation}
Since $\mathrm{Ext}^1_{kG}(\overline{U}_n,t^2 \overline{E}_n)
\cong \mathrm{Ext}^1_{kG}(T_{00},V_n)=0$,
the sequence $(\ref{eq:thes})$ splits as a sequence of $kG$-modules,
i.e. $\overline{E}_n\cong V_n\oplus \overline{U}_n$ as $kG$-modules. 
Identifying the $kG$-module structure of $\overline{E}_n$ with 
$V_n\oplus \overline{U}_n$,
multiplication by $t$ on  an element $(v,u)\in V_n\oplus \overline{U}_n=\overline{E}_n$ is given by
$t\cdot (v,u) = (g(u),t\cdot u)$,
where $g:\overline{U}_n\to V_n$ is a surjective $kG$-module homomorphism.
Hence
$t^2\cdot (v,u) = (g(t\cdot u),t^2\cdot u)=(g(t\cdot u),0).$
Since $t^2\overline{E}_n\cong V_n$, this means that $g\big|_{t\cdot \overline{U}_n}:
t\cdot \overline{U}_n\to V_n$ must be 
an isomorphism of $kG$-modules.
On the other hand, 
$\overline{U}_n=(P_{T_0})^n \oplus T_{00}$ and $\mathrm{soc}(t\cdot \overline{U}_n)$ 
contains a non-zero submodule of $\mathrm{soc}((P_{T_0})^n)$ (resp. $\mathrm{soc}(T_{00})$) if
$n\ge 1$ (resp. $n=0$). Since every surjective $kG$-module 
homomorphism $\overline{U}_n\to V_n$ sends the elements in 
$\mathrm{soc}((P_{T_0})^n)$ (resp. $\mathrm{soc}(T_{00})$) to zero if $n\ge 1$ (resp. $n=0$), 
we get a contradiction.
Therefore, $\overline{E}_n$ does not exist, which means that $\sigma$ is a 
$k$-algebra isomorphism. Thus $\overline{R}_n=R(G,V_n)/2R(G,V_n)\cong k[t]/(t^2)$.

By \cite[Prop. 3.1]{holm}, for fixed $d\ge 3$, $\Lambda_{1,c}$ and $\Lambda_{2,c}$ are derived 
equivalent. 
By \cite[Cor. 5.5]{rickard1}, this means that there is a stable equivalence of Morita type 
between $\Lambda_{1,c}$ and $\Lambda_{2,c}$. Hence it follows from \cite[Lemma 2.2.3]{3sim}
that $R(G,V_n)/2R(G,V_n)\cong k[t]/(t^2)$ for all $n\ge 0$
when $B$ is Morita equivalent to $\Lambda_{1,c}$ and $V_n\in\{A_n,A_{-n}\}$ 
for $B$.

Suppose now that $B$ is Morita equivalent to either $\Lambda_{1,c}$ or
$\Lambda_{2,c}$, and let $n\ge 0$. By Lemma \ref{lem:liftingc0}, $V_n$ has two 
non-isomorphic lifts over $W$. By \cite[Lemma 2.1]{bc5},
this means that $R(G,V_n)\cong W[[t]]/(t(t-2\mu_{V_n}))$ for 
some non-zero $\mu_{V_n}\in W$. This implies that 
$R(G,V_n)$ is isomorphic to a $W$-subalgebra of $W[\mathbb{Z}/2]$.

To prove the last statement of Proposition \ref{prop:udrc0},
let us now assume that $B$ is a principal block of $kG$. 
By Theorem \ref{thm:principalblocks}(i), there exists $i\in\{1,2\}$
such that $B$ is Morita equivalent to $\Lambda_{i,0}$, i.e. the parameter $c$ is zero.
As above, let $T_{00}$ be a uniserial $kG$-module with composition factors $T_0,T_0$. 
By Theorem \ref{thm:principalblocks}(ii), there exists an element $\tau\in G$ of order $2$ such 
that $\mathrm{Res}^G_{\langle\tau\rangle}(T_{00})\cong k\langle \tau\rangle$.
Let $J=\langle \tau\rangle$. Then $\mathrm{Res}^G_J(T_{00})\cong kJ$.
Consider the restriction of the projective cover $P_{T_0}$ to $J$, which is a
projective $kJ$-module. If $B$ is Morita equivalent to $\Lambda_{2,0}$, we can directly use
the description of the radical series of $P_{T_0}$  in Figure \ref{fig:projs} to see
that $\mathrm{Res}_J^G(T_1)$ is also a projective $kJ$-module.
If $B$ is Morita equivalent to $\Lambda_{1,0}$, then we can use
the description of the radical series of $P_{T_0}$ in Figure \ref{fig:projs} to see that
$\mathrm{Res}_J^G(\Omega^{-1}(T_{001}))$ is a projective $kJ$-module, where
$T_{001}$ denotes the uniserial $kG$-module with descending composition factors
$T_0,T_0,T_1$. Since $\mathrm{Res}_J^G(P_{T_1})$ is projective, this then implies
that $\mathrm{Res}_J^G(T_{001})$, and hence $\mathrm{Res}_J^G(T_1)$, 
is a projective $kJ$-module. It follows that for both $i=1$ and $i=2$ and for all $n\ge 0$,
$\mathrm{Res}_J^G(V_n)\cong k\oplus Q_n$, where $Q_n$ is a projective 
$kJ$-module.
Therefore, $\mathrm{Res}^G_J(V_n)$ is a $kJ$-module whose
stable endomorphism ring is isomorphic to $k$. Moreover, its universal deformation ring is
$R(J,\mathrm{Res}^G_J(V_n))\cong W[\mathbb{Z}/2]$.
Let $(U_{n,J},\phi_{n,J})$  be a universal lift of $\mathrm{Res}^G_J(V_n)$ over 
$W[\mathbb{Z}/2]$,
and let $(U_n,\phi_n)$ be a universal lift of $V_n$ over $R(G,V_n)$. 
Then there exists a unique $W$-algebra homomorphism 
$f_n:W[\mathbb{Z}/2]\to R(G,V_n)$ in $\mathcal{C}$  such that the lift 
$(\mathrm{Res}^G_{J}(U_n),\mathrm{Res}^G_{J}(\phi_n))$ 
is isomorphic to the lift 
$(R(G,V_n)\otimes_{W[\mathbb{Z}/2],f_n} U_{n,J},\,(\phi_{n,J})_{f_n})$. 
Since for both $i=1$ and $i=2$, $\overline{U}_n=(P_{T_0})^n \oplus T_{00}$ defines a non-trivial lift 
of $V_n$ over $k[\epsilon]/(\epsilon^2)$ and the restriction 
$\mathrm{Res}^G_J(\overline{U}_n)$ defines a non-trivial lift of $\mathrm{Res}^G_J(V_n)$ over 
$k[\epsilon]/(\epsilon^2)$, it follows that $f_n:W[\mathbb{Z}/2]\to R(G,V_n)$ is surjective.
By Lemma \ref{lem:liftingc0}, there are two distinct morphisms
$R(G,V_n)\to W$ in $\mathcal{C}$, which means that $\mathrm{Spec}(R(G,V_n))$ 
contains both points of the generic fiber of $\mathrm{Spec}(W[\mathbb{Z}/2])$.
Since the Zariski closure of these points is all of $\mathrm{Spec}(W[\mathbb{Z}/2])$,
this implies that $R(G,V_n)$ must be isomorphic to $W[\mathbb{Z}/2]$.
This completes the proof of Proposition \ref{prop:udrc0}.
\end{proof}


\subsection{The $3$-tube of the stable Auslander-Reiten quiver of $B$}
\label{ss:3tube}

Let $\mathfrak{T}_3$ be the stable Auslander-Reiten component of $B$ 
that is a $3$-tube.
By Proposition \ref{prop:stablend}(iii), the only modules in $\mathfrak{T}_3$
whose stable endomorphism rings are isomorphic to $k$ are the modules at its boundary.

\begin{prop}
\label{prop:3tube}
Let $V$ be a $B$-module at the boundary of  $\mathfrak{T}_3$. Then
$R(G,V)\cong k$.
\end{prop}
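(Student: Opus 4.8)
The plan is to show that the boundary module $V$ of the $3$-tube $\mathfrak{T}_3$ has no non-trivial lift over the ring of dual numbers $k[\epsilon]/(\epsilon^2)$, i.e.\ that its tangent space $\mathrm{Ext}^1_{kG}(V,V)$ is already killed by the obstruction, or in fact is zero. By Proposition \ref{prop:stablend}(iii) we know $\underline{\mathrm{End}}_{kG}(V)\cong k$ (so $R(G,V)$ exists by Proposition \ref{prop:stablendudr}) and, crucially, that $\mathrm{Ext}^1_{\Lambda_{i,c}}(M,M)=0$ for every module $M$ in this component having stable endomorphism ring $k$; under the Morita equivalence between $B$ and $\Lambda_{i,c}$ this gives $\mathrm{Ext}^1_{kG}(V,V)=0$. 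Since the tangent space of the deformation functor $F_V$ is $\mathrm{Ext}^1_{kG}(V,V)$ (this is the standard computation recalled implicitly via \cite{bc}, and it coincides here with the naive deformation tangent space because $\underline{\mathrm{End}}_{kG}(V)\cong k$, cf.\ Remark \ref{rem:deformations}), vanishing of this $\mathrm{Ext}^1$ forces the mod-$2$ reduction $R(G,V)/2R(G,V)$ to be $k$.

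From there one concludes $R(G,V)\cong k$ by a short argument ruling out characteristic-zero deformations. Concretely, if $\mathrm{Ext}^1_{kG}(V,V)=0$ then $R(G,V)$ is a quotient of $W$ (its maximal ideal is generated by the image of $2$, since the mod-$2$ reduction is $k$), so $R(G,V)\cong W/2^m W$ for some $m\in\{1,2,\dots,\infty\}$, and it suffices to exclude $m\ge 1$ lifting past $k$, i.e.\ to exclude a lift of $V$ over $W/4$ (equivalently over $W$). The cleanest way is to use the decomposition matrix in Figure \ref{fig:decomp} together with \cite[Prop.\ (23.7)]{CR}: any lift of $V$ over $W$ would have an $F$-character that is a non-negative integral combination of $\chi_1,\chi_2,\chi_3,\chi_4,\chi_{5,j}$ reducing to the Brauer character of $V$, and one checks, using the description of $V$ as the relevant maximal-uniserial string module at the boundary of the $3$-tube (the module $S_1$ for $i=1$, resp.\ the uniserial module of length $2^{d-2}$ with all composition factors $T_1$ for $i=2$, from the proof of Proposition \ref{prop:stablend}), that no such character combination exists. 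In the $i=1$ case $V=T_1$ has Brauer character $\varphi_1$, and the only rows of the decomposition matrix contributing $\varphi_1$ also contribute $\varphi_0$, so no lift of the simple module $T_1$ alone can exist; the $i=2$ case is the analogue of the statement already used in the proof of Lemma \ref{lem:liftingc0}, namely that the uniserial $kG$-module with all composition factors $T_1$ has no lift over $W$.

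I expect the main obstacle to be the bookkeeping in this last step: one must identify precisely which string module sits at the boundary of $\mathfrak{T}_3$ in each of the cases $i=1$ and $i=2$ (and for $i=2$ distinguish $d=3$ from $d\ge 4$), transport the question through the Morita equivalence, and then verify the non-liftability by a decomposition-matrix argument rather than by an explicit cocycle computation. Everything else is formal: the identification of the tangent space with $\mathrm{Ext}^1$, the reduction to $R(G,V)$ being a quotient of $W$, and the conclusion $R(G,V)\cong k$. One could alternatively package the whole argument by invoking that $V$ is $\Omega$-periodic of period $3$ (hence $\Omega$-periodic), so $R(G,V)$ is a quotient of $W$ by \cite[Prop.\ 2.1]{bc}-type reasoning, and then only the single non-liftability check remains.
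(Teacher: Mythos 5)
Your opening step is sound: Proposition \ref{prop:stablend}(iii) transported through the Morita equivalence gives $\mathrm{Ext}^1_{kG}(V,V)=0$, this is the tangent space of $F_V$, and Nakayama's lemma then forces $\mathfrak{m}_{R(G,V)}=2R(G,V)$, so that $R(G,V)$ is a quotient of $W$, i.e.\ isomorphic to $W$ or to $W/2^mW$ for some $m\ge 1$. The gap is in the next step: to conclude $R(G,V)\cong k$ you must rule out a lift of $V$ over $W/4W$, and your parenthetical ``(equivalently over $W$)'' is false. A lift over $W/4W$ need not be the reduction of a $WG$-lattice, so excluding characteristic-zero lifts leaves the possibility $R(G,V)\cong W/2^mW$ with $2\le m<\infty$ entirely open. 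Decomposition matrices and \cite[Prop. (23.7)]{CR} only detect lattices; they carry no information about torsion lifts, and rings of the form $W/4W$ do occur as universal deformation rings in other settings, so this is not a vacuous worry. (A secondary, repairable, inaccuracy: for $i=2$ the boundary of $\mathfrak{T}_3$ consists of $T_{001}$, $T_{100}$ and the uniserial module $T_{1\cdots 1}$ with all composition factors $T_1$; the Brauer character of $T_{001}$ is $2\varphi_0+\varphi_1=\chi_1+\chi_3$ on decomposition numbers, so even the characteristic-zero exclusion fails for that representative by characters alone --- you must first pass to the $\Omega$-translate $T_{1\cdots 1}$ via Proposition \ref{prop:stablendudr}(ii), and even there the argument needs the multiplicity-one occurrence of each $\chi_{5,j}$ in $P_{T_1}^W$, not just non-negativity of character coefficients.)

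What is actually needed is a proof that the obstruction to lifting $V$ from $k$ to $W/4W$ is non-zero, and this cannot be extracted from ordinary characters. The paper obtains it by a completely different route: $V$ has a Klein four vertex $K$ with $N_G(K)/C_G(K)\cong S_3$, its Green correspondent $fV$ lies at the boundary of the $3$-tube of a block $b$ of $kN_G(K)$ with $b^G=B$ that is Morita equivalent to $kS_4$ modulo the socle, and \cite[Prop. 5.2.4]{3sim} computes $R(N_G(K),fV)\cong k$ there by an explicit local argument; the isomorphism $R(G,V)\cong k$ then follows as in \cite[Cor. 5.2.5]{3sim}. If you wish to avoid Green correspondence, you would have to replace your character computation by a direct cocycle or module-theoretic argument showing $V$ admits no free $W/4W$-lift, which is precisely the step your proposal leaves unaddressed.
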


\begin{proof}
We want to use similar arguments to the ones used in \cite[Sect. 5.2]{3sim}. 
The crucial step is to establish that certain statements are true for $\mathfrak{T}_3$
that closely resemble the statements in \cite[Facts 5.2.1]{3sim}. 
Because of the work in \cite[Chapter V]{erd}, we indeed have the following facts.
Let $V$ be a module at the boundary of $\mathfrak{T}_3$ and let $K$ be a vertex of $V$.
\begin{enumerate}
\item[(i)] The group $K$ is a Klein four group and
the quotient group $N_G(K)/C_G(K)$ is isomorphic to a symmetric group $S_3$.
\item[(ii)] There is a block $b$ of $kN_G(K)$ with $b^G=B$ such that the Green 
correspondent $fV$ of $V$ belongs to the boundary of a $3$-tube in the stable
Auslander-Reiten quiver of $b$. Moreover, $b$ is Morita equivalent to $kS_4$ modulo 
the socle.
\end{enumerate}

Because of these facts, it follows as in \cite[Prop. 5.2.4]{3sim} that if $U$ is an 
indecomposable $b$-module belonging to the boundary of the $3$-tube of the stable 
Auslander-Reiten quiver of $b$, then $U$ has a universal deformation ring 
that is isomorphic to $k$.
Arguing as in the proof of \cite[Cor. 5.2.5]{3sim}, we then see that this implies
that $R(G,V)\cong k$.
\end{proof}


\subsection{The stable Auslander-Reiten components of $B$ corresponding to part (ii) of
Proposition \ref{prop:stablend}}
\label{ss:omegaorbit}

Let $\mathfrak{C}_1$ be one of the stable Auslander-Reiten components of $B$ that 
correspond to part (ii) of Proposition \ref{prop:stablend}. 
Let $T_{100}$ (resp. $T_{001}$) be the uniserial $B$-module with descending composition factors 
$T_1,T_0,T_0$ (resp. $T_0,T_0,T_1$). Note that $T_{001}\cong \Omega^2(T_{100})$.
If $B$ is Morita
equivalent to $\Lambda_{i,c}$, then $\mathfrak{C}_1$ contains the $B$-module $Z$, where
\begin{equation}
\label{eq:z12}
\mbox{$Z=T_{100}\;$ if $i=1$} \quad \mbox{ (resp.  $Z=T_1\;$ if $i=2$).}
\end{equation}
By Proposition \ref{prop:stablend}(ii), the only modules in $\mathfrak{C}_1\cup\Omega(\mathfrak{C}_1)$ 
whose stable endomorphism rings are isomorphic to $k$ lie in the $\Omega$-orbit of $Z$. 

\begin{prop}
\label{prop:omegaorbit}
Let $V$ be a $B$-module in $\mathfrak{C}_1\cup\Omega(\mathfrak{C}_1)$ whose stable
endomorphism ring is isomorphic to $k$. Then $R(G,V)\cong W[[t]]/(t\, p_d(t),2\,p_d(t))$, 
where $p_d(t)\in W[t]$ is as in Definition $\ref{def:seemtoneed}$. In particular, $R(G,V)$ is 
isomorphic to a subquotient algebra of $WD$.
\end{prop}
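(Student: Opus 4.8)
The plan is to reduce the computation of $R(G,V)$ to the analysis of a single module $Z$ in the $\Omega$-orbit (using Proposition~\ref{prop:stablendudr}(ii), which says $R(G,V)\cong R(G,\Omega^j(V))$ for all $j$), and then to compute $R(G,Z)$ by producing a versal lift over $R'=W[[t]]/(p_d(t))$ and identifying the obstruction to lifting further. The first concrete step is to determine $\underline{\mathrm{End}}_{kG}(Z)\cong k$ (known from Proposition~\ref{prop:stablend}(ii)) and to compute $\mathrm{Ext}^1_{kG}(Z,Z)$. Proposition~\ref{prop:stablend}(ii) already gives $\mathrm{Ext}^1_{kG}(Z,Z)\cong k$, so $R(G,Z)$ is a quotient of $W[[t]]$ by the general theory, and the whole problem becomes: identify precisely which quotient.

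First I would construct a lift of $Z$ over $R'=W[[t]]/(p_d(t))$. The natural candidate is obtained from the simple $FG$-modules: recall from Section~\ref{ss:ordinary} that the characters $\chi_{5,j}$ fall into the Galois orbits $\mathcal{O}_2,\ldots,\mathcal{O}_{d-1}$, giving simple $FG$-modules $V_2,\ldots,V_{d-1}$ with $\mathrm{End}_{FG}(V_\ell)\cong F(\zeta_{2^\ell}+\zeta_{2^\ell}^{-1})$, and that $p_d(t)=\prod_{\ell=2}^{d-1}\mathrm{min.pol.}_F(\zeta_{2^\ell}+\zeta_{2^\ell}^{-1})$. One takes a suitable $W$-form $L$ inside $\bigoplus_{\ell=2}^{d-1}V_\ell$ (together with copies of the $\chi_j$, $j=1,2,3,4$, as forced by the decomposition matrix in Figure~\ref{fig:decomp}), arranged so that $k\otimes_W L\cong Z$; this is where one must work with the explicit string-module structure of $Z$ and the projective covers $P_{T_1}$ (for $i=1$) or $P_{T_0},P_{T_1}$ (for $i=2$), in the spirit of the proof of Lemma~\ref{lem:liftingc0}. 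The action of $t$ on this lift should be given via the class sum $t(C)$ of the conjugacy class of $\delta$, whose action on $V_\ell$ is multiplication by $w\,(\zeta_{2^{d-1}}^{2^{d-1-\ell}}+\zeta_{2^{d-1}}^{-2^{d-1-\ell}})$ by \eqref{eq:thatsit}; since $\zeta_{2^\ell}+\zeta_{2^\ell}^{-1}$ is (up to the unit $w$) a root of $\mathrm{min.pol.}_F(\zeta_{2^\ell}+\zeta_{2^\ell}^{-1})$, this $t$ acts with minimal polynomial dividing $p_d(t)$ on the relevant $FG$-part, which is exactly what makes $L$ into an $R'G$-module free over $R'$. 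This produces a surjection $R(G,Z)\twoheadrightarrow R'$ whose kernel I then need to pin down.

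Next I would show that $R(G,Z)\cong W[[t]]/(t\,p_d(t),2\,p_d(t))$ by two complementary bounds. For the upper bound (surjection $W[[t]]/(t\,p_d(t),2\,p_d(t))\twoheadrightarrow R(G,Z)$): since $\mathrm{Ext}^1_{kG}(Z,Z)\cong k$, $R(G,Z)$ is a quotient of $W[[t]]$, and one analyzes the obstruction map $\mathrm{Ext}^1\to\mathrm{Ext}^2$ or, more concretely, uses that $\mathrm{Ext}^1_{kG}(Z,Z)\cong k$ together with a tangent-space / small-extension argument (as in \cite[Lemma 2.1]{bc5} or the arguments in \cite[Sect.~5]{3sim}) to see that the universal deformation cannot have a larger tangent space and that the relevant relations must hold. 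For the lower bound (surjection $R(G,Z)\twoheadrightarrow W[[t]]/(t\,p_d(t),2\,p_d(t))$, i.e.\ producing enough distinct lifts): one exhibits, for each $\ell\in\{2,\ldots,d-1\}$, lifts of $Z$ over $W[t]/(\mathrm{min.pol.}_F(\zeta_{2^\ell}+\zeta_{2^\ell}^{-1}))$ coming from the $FG$-modules $V_\ell$, and a lift over $W[\mathbb{Z}/2]$-type quotient $W[[t]]/(t\,p_d(t),2\,p_d(t))$ by glueing. Since $p_d(t)$ has degree $\sum_{\ell=2}^{d-1}2^{\ell-2}=2^{d-2}-1$ with all non-leading coefficients divisible by $2$ (it is built from $\mathrm{min.pol.}_F$'s of the $\zeta_{2^\ell}+\zeta_{2^\ell}^{-1}$, which are Eisenstein-like at $2$), the ring $W[[t]]/(t\,p_d(t),2\,p_d(t))$ is precisely the fiber product $R'\times_k (W[[t]]/(t,2p_d(t)))$ detected by these lifts.

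Finally, for the last sentence — that $R(G,V)$ is a subquotient of $WD$ — I would note that $D$ being dihedral of order $2^d$ contains a cyclic subgroup $C=\langle\delta\rangle$ of order $2^{d-1}$, so $WC\cong W[x]/(x^{2^{d-1}}-1)$ is a quotient of $WD$; writing $x^{2^{d-1}}-1 = (x-1)(x+1)\prod_{\ell=2}^{d-1}\Phi_{2^\ell}(x)$ and substituting $t=x+x^{-1}-2$ (or $t=x+x^{-1}$ up to a unit), one identifies $W[[t]]/(t\,p_d(t),2\,p_d(t))$ with an explicit subquotient ring of $WC$, hence of $WD$; this is the same mechanism used in \cite[Sect.~5]{3sim} and it should transfer essentially verbatim. \textbf{The main obstacle} I anticipate is the construction of the $W$-form $L$ with $k\otimes_W L\cong Z$ and the verification that $t$ acts on it with the stated minimal polynomial: the string-module combinatorics of $Z$ (especially in the $\Lambda_{1,c}$ case, where $Z=T_{100}$ and the projective $P_0,P_1$ have long radical series of length $3\cdot 2^{d-2}+1$) must be matched carefully against the decomposition matrix and the character-theoretic data \eqref{eq:great1}, \eqref{eq:thatsit}, and getting the reduction mod $2$ to land exactly on $Z$ rather than on a syzygy of it requires the same kind of delicate bookkeeping as in Lemma~\ref{lem:liftingc0}.
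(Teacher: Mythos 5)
Your overall strategy --- reduce to a single module $Z$ in the $\Omega$-orbit via Proposition \ref{prop:stablendudr}(ii), build a lift of $Z$ over $R'=W[[t]]/(p_d(t))$ out of a $W$-lattice whose $F$-character is $\sum_{j}\chi_{5,j}$ with $t$ acting through the class sum of $\delta$, and then identify the kernel of the resulting surjection $R(G,Z)\twoheadrightarrow R'$ --- is the same as the paper's in outline. But the two steps that actually pin down the answer are missing, and the "upper bound'' argument you sketch would not work. Since $\mathrm{Ext}^1_{kG}(Z,Z)\cong k$ for \emph{both} $R'$ and $W[[t]]/(t\,p_d(t),2\,p_d(t))$ (and indeed for $W[[t]]/(p_d(t)(t-2\mu))$ for any $\mu\in W$), no tangent-space or small-extension argument of the kind you describe can distinguish these candidates. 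What the paper does instead is first compute the mod $2$ deformation ring exactly: $R(G,Z)/2R(G,Z)\cong k[t]/(t^{2^{d-2}})$, one dimension \emph{more} than $R'/2R'\cong k[t]/(t^{2^{d-2}-1})$ (recall $\deg p_d=2^{d-2}-1$ with all non-leading coefficients divisible by $2$). This is done by exhibiting the uniserial module $\overline{U}=\Omega^{-1}(T_{001})$ (resp.\ $\Omega^{-1}(T_1)$ for $i=1$) as a free $k[t]/(t^{2^{d-2}})$-module lifting $Z$ and checking $\mathrm{Ext}^1_{kG}(\overline{U},Z)=0$, which by \cite[Lemma 2.3.1]{3sim} forces $\overline{R}\cong k[t]/(t^{2^{d-2}})$. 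This computation is what proves $R(G,Z)\not\cong R'$ and forces the extra (non-complete-intersection) torsion term; your proposal never establishes it.

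The second missing step is the argument that bounds the torsion from above. Once one knows $\overline{R}\cong k[t]/(t^{2^{d-2}})$ and has the surjection onto $R'$, \cite[Lemma 2.3.3]{3sim} only gives $R(G,Z)\cong W[[t]]/(p_d(t)(t-2\mu),\,a\,2^m p_d(t))$ with $a\in\{0,1\}$ and $m\ge 1$ unknown. The paper rules out $a=0$ and $m\ge 2$ by observing that either case would produce a lift of $\overline{U}$ over $W$ or over $W/2^mW$; but $\overline{U}$ lies at the boundary of the $3$-tube, so $R(G,\overline{U})\cong k$ by Proposition \ref{prop:3tube} and no such lift exists. Your "lower bound'' via exhibiting lifts over $W[t]/(\mathrm{min.pol.}_F(\zeta_{2^\ell}+\zeta_{2^\ell}^{-1}))$ only reproves surjectivity onto $R'$ and cannot detect the $2$-torsion summand. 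Finally, a smaller point: the lift $U'$ one constructs from the character data reduces mod $2$ to $\overline{U}/Z$, not to $Z$ itself, so the "delicate bookkeeping'' you flag as the main obstacle is organized differently in the paper --- one matches $U'/2U'$ with the uniserial module $\overline{U'}=\overline{U}/Z$ (which is determined by its top and composition factors) rather than constructing a lattice reducing directly to the string module $Z$. The $i=1$ case is then handled by transporting the mod $2$ computation through the stable equivalence of Morita type between $\Lambda_{1,c}$ and $\Lambda_{2,c}$, as you would need to do as well.
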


\begin{proof}
By Proposition \ref{prop:stablendudr}(ii) and Proposition \ref{prop:stablend}(ii), 
it suffices to prove Proposition \ref{prop:omegaorbit} when $V=Z$ as in (\ref{eq:z12}).

Suppose first that $B$ is Morita equivalent to $\Lambda_{2,c}$. Then $Z= T_1$.
We first prove that $\overline{R}=R(G,T_1)/2R(G,T_1)$ is isomorphic to $k[t]/(t^{2^{d-2}})$.
By Proposition \ref{prop:stablend}(ii), $\mathrm{Ext}^1_{kG}(T_1,T_1)\cong k$, which
implies that $\overline{R}$ is isomorphic to a quotient algebra of $k[[t]]$. 
By Figure \ref{fig:projs}, the projective indecomposable $kG$-module $P_{T_1}$ has the form 
$\begin{array}{c}T_1\\U_1\;U_2\\T_1\end{array}$ where $U_1$ is uniserial of length $2$
and $U_2$ is uniserial of length $2^{d-2}-1$ whose composition factors are all
isomorphic to $T_1$. Let $\overline{U}$ be the uniserial $B$-module corresponding to the
$\Lambda_{2,c}$-string module $M(\eta^{2^{d-2}-1})$ and let $T_{001}$ be the
uniserial $B$-module corresponding to the $\Lambda_{2,c}$-string module $M(\beta\alpha)$.
Then $\overline{U}=\begin{array}{c}T_1\\U_2\end{array}$ and 
$T_{001}=\begin{array}{c}U_1\\T_1\end{array}$, and
$$\mathrm{Ext}^1_{kG}(\overline{U},T_1)=
\underline{\mathrm{Hom}}_{kG}(\Omega(\overline{U}),T_1)=
\underline{\mathrm{Hom}}_{kG}(T_{001},T_1)=0.$$
By \cite[Lemma 2.3.1]{3sim}, this implies that $\overline{R}\cong k[t]/(t^{2^{d-2}})$ 
and that the universal mod $2$ deformation of $T_1$ over $\overline{R}$ is 
represented by the $kG$-module $\overline{U}$. 

To compute the universal deformation ring $R=R(G,T_1)$,
we use similar arguments to the ones used in \cite[Sect. 5.3]{3sim}.
The uniserial $kG$-module $\overline{U'}=\overline{U}/T_1$ of length $2^{d-2}-1$ defines a lift of $T_1$ over $k[t]/(t^{2^{d-2}-1})$. It follows from the decomposition matrix in 
Figure \ref{fig:decomp} and \cite[Prop. (23.7)]{CR} that there is a $W$-pure 
$WG$-sublattice $X'$ of the projective indecomposable $WG$-module $P_{T_1}^W$ 
with top $T_1$ such that $U'=P_{T_1}^W/X'$ has $F$-character
$\sum_{\ell=2}^{d-1}\rho_\ell=\sum_{j=1}^{2^{d-2}-1}\chi_{5,j}$.
Since $U'/2U'$ is an indecomposable $kG$-module with top $T_1$ that has the same 
composition factors as $\overline{U'}$, it is isomorphic to $\overline{U'}$. 
Hence $U'$ defines a lift $(U',\phi')$ of $\overline{U'}$ over $W$. 
Using similar arguments as in the proof of \cite[Thm. 5.1]{3sim} and employing
the statements about the ordinary characters belonging to $B$ discussed
in Section \ref{ss:ordinary}, it follows that $U'$ also defines a lift
$(U',\psi')$ of $T_1$ over $R'$, where $R'=W[[t]]/(p_d(t))$ is as in Definition 
\ref{def:seemtoneed}. We 
therefore have a continuous $W$-algebra homomorphism $\sigma':R\to R'$ relative to $(U',\psi')$. 
Since $U'/2U'$ is indecomposable as a $kG$-module, $\sigma'$ must be surjective. 
By \cite[Lemma 2.3.3]{3sim},  it follows that 
$R\cong W[[t]]/(p_d(t)(t-2\mu),a\,2^mp_d(t))$ for certain $\mu\in W$, $a\in\{0,1\}$ 
and $0 < m\in\mathbb{Z}$. If $a=0$, then $R\cong W[[t]]/(p_d(t)(t-2\mu))$ is free over $W$.
If $a=1$, then $(W/2^mW)\otimes_W R\cong (W/2^m W)[[t]]/(p_d(t) (t-2\mu))$ is free over $W/2^mW$.
Therefore it follows that if $a=0$ (resp. $a=1$), then there is 
a lift of $\overline{U}$, when regarded as a $kG$-module, over $W$ 
(resp. over $W/2^mW$). But $\overline{U}\cong \Omega^{-1}(T_{001})$, which means
that $\overline{U}$ is a module at the end of the $3$-tube, and hence
$R(G,\overline{U})\cong k$ by Proposition \ref{prop:3tube}. Therefore,
$a=1$ and $m=1$. Since by \cite[Lemma 2.3.6]{3sim} the ring $W[[t]]/(t\,p_d(t),2\,p_d(t))$
is isomorphic to a subquotient algebra of $WD$, this proves Proposition \ref{prop:omegaorbit} 
when $i=2$.

Suppose now that $B$ is Morita equivalent to $\Lambda_{1,c}$. Then 
$Z=T_{100}$.
Since by \cite[Prop. 3.1]{holm} and \cite[Cor. 5.5]{rickard1}, 
there is a stable equivalence of Morita type between $\Lambda_{1,c}$ and 
$\Lambda_{2,c}$, it follows by \cite[Lemma 2.2.3]{3sim} that 
$R(G,T_{100})/2R(G,T_{100})\cong k[t]/(t^{2^{d-2}})$. 
By considering the projective cover $P_{T_1}$ of $T_1$, it follows
moreover that $\overline{U}=\Omega^{-1}(T_1)$ defines the universal mod $2$ deformation 
of $T_{100}$.
The uniserial $kG$-module $\overline{U'}=\overline{U}/T_{100}$ of length 
$3(2^{d-2}-1)$ defines a lift of $T_{100}$ over $k[t]/(t^{2^{d-2}-1})$. It follows from the 
decomposition matrix in Figure \ref{fig:decomp} and \cite[Prop. (23.7)]{CR} that there 
is a $W$-pure $WG$-sublattice $X'$ of the projective indecomposable $WG$-module 
$P_{T_1}^W$ with top $T_1$ such that $U'=P_{T_1}^W/X'$ has $F$-character
$\sum_{\ell=2}^{d-1}\rho_\ell=\sum_{j=1}^{2^{d-2}-1}\chi_{5,j}$.
Since $U'/2U'$ is an indecomposable $kG$-module with top $T_1$ that has the same 
composition factors as $\overline{U'}$, it is isomorphic to $\overline{U'}$. 
Using similar arguments to the case when $B$ is Morita equivalent to $\Lambda_{2,c}$,
we see that $R(G,T_{100})\cong W[[t]]/(t\,p_d(t),2\,p_d(t))$. This proves Proposition
\ref{prop:omegaorbit} when $i=1$.
\end{proof}


\subsection{The one-tubes of the stable Auslander-Reiten quiver of $B$ in the case when $c=1$}
\label{ss:onetubes}

In this subsection, we assume $c=1$, i.e. $B$ is Morita equivalent to $\Lambda_{i,1}$
for some $i\in\{1,2\}$.
For $\lambda\in k$, let $\mathfrak{T}_{B,\lambda}$ be the one-tube of the stable Auslander-Reiten
quiver of $B$ that corresponds, under the Morita equivalence, to the one-tube 
$\mathfrak{T}^{(\lambda)}$ of the stable Auslander-Reiten quiver of $\Lambda_{i,1}$ from part (iv) 
of Proposition \ref{prop:stablend}. By Proposition \ref{prop:stablend}(iv), the only modules in 
$\mathfrak{T}_{B,\lambda}$ whose stable endomorphism rings are isomorphic to $k$ are the 
modules at its boundary.

\begin{prop}
\label{prop:onetubes}
Let $\lambda\in k$ and let $V$ be a $B$-module at the boundary of $\mathfrak{T}_{B,\lambda}$. 
Then $R(G,V)\cong W[[t]]/(2\, f_V(t))$ for a certain power series $f_V(t)\in W[[t]]$.
\end{prop}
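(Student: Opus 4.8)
The plan is to follow the pattern of Propositions~\ref{prop:udrc0} and~\ref{prop:omegaorbit}: pin down the shape of $R(G,V)$ from the $\mathrm{Ext}$-groups of $V$, compute $R(G,V)/2R(G,V)$ by exhibiting enough mod-$2$ deformations of $V$, and use that $V$ has only finitely many lifts over $W$. By Proposition~\ref{prop:stablendudr}(i) it suffices to treat the unique (up to isomorphism) non-projective indecomposable $B$-module $V$ at the boundary of $\mathfrak{T}_{B,\lambda}$, which corresponds to $S^{(\lambda)}_{010}$ under the Morita equivalence between $B$ and $\Lambda_{i,1}$. Since $\Omega(S^{(\lambda)}_{010})\cong S^{(1-\lambda)}_{010}$ and at least one of $\lambda$, $1-\lambda$ is non-zero, after possibly replacing $V$ by $\Omega(V)$ --- which is harmless by Proposition~\ref{prop:stablendudr}(ii) --- we may assume that $V$ is the Morita image of a genuine band module $S^{(\mu)}_{010}=M(\alpha\,C_{010},\mu,1)$ for some $\mu\in k^*$. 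In particular $\Omega^2(V)\cong V$, so $V$ has $\Omega$-period $2$.

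By Proposition~\ref{prop:stablend}(iv) we have $\mathrm{Ext}^1_{kG}(V,V)\cong k$, so $R(G,V)$ is a quotient of $W[[t]]$; and using $\Omega^2(V)\cong V$ together with $\underline{\mathrm{End}}_{kG}(V)\cong k$ we get $\mathrm{Ext}^2_{kG}(V,V)\cong\underline{\mathrm{Hom}}_{kG}(\Omega^2(V),V)\cong\underline{\mathrm{End}}_{kG}(V)\cong k$, so the obstruction space for deforming $V$ is one-dimensional and hence $R(G,V)\cong W[[t]]/(g)$ for a single element $g\in W[[t]]$ (cf.\ \cite{bc}). To see that $g\in 2W[[t]]$, I would construct, for each $n\ge 1$, a non-trivial deformation of $V$ over $k[t]/(t^n)$: since $\Lambda_{i,1}/\mathrm{soc}(\Lambda_{i,1})$ is a string algebra and band modules base-change along $k\to k[t]/(t^n)$, the module obtained from the band $\alpha\,C_{010}$ with parameter $\mu+t\in(k[t]/(t^n))^*$ is free over $k[t]/(t^n)$, reduces modulo $t$ to $S^{(\mu)}_{010}$, and --- after transport through the Morita equivalence base-changed to $k[t]/(t^n)$ --- yields a lift of $V$ over $k[t]/(t^n)$ whose first-order term spans $\mathrm{Ext}^1_{kG}(V,V)$. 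These lifts are compatible in $n$, so $R(G,V)/2R(G,V)$, being a quotient of $k[[t]]$ surjecting onto $k[t]/(t^n)$ for all $n$, must equal $k[[t]]$, and therefore $g\in 2W[[t]]$. To see that $g\ne 0$, observe that a lift of $V$ over $W$ is a $WG$-lattice $L$, free over $W$, with $L/2L\cong V$; the $F$-character of $F\otimes_W L$ is a sum of those irreducible characters $\chi_1,\dots,\chi_4,\chi_{5,j}$ belonging to $B$ whose degree and reduction are compatible with $V$, and there are only finitely many such sums and, by Jordan--Zassenhaus, only finitely many isomorphism classes of lattices in each associated $FG$-module. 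Thus $V$ has only finitely many lifts over $W$, so $\mathrm{Hom}_{\mathcal{C}}(R(G,V),W)$ is finite; since $\mathrm{Hom}_{\mathcal{C}}(W[[t]],W)$ is infinite, $R(G,V)\not\cong W[[t]]$ and hence $g\ne 0$. Writing $g=2\,f_V(t)$ with $0\ne f_V(t)\in W[[t]]$ then gives $R(G,V)\cong W[[t]]/(2\,f_V(t))$, as claimed.

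The step I expect to be the main obstacle is making the band-parameter construction rigorous: one must check that $M(\alpha\,C_{010},\mu+t,1)$ really is a module over $\Lambda_{i,1}\otimes_k k[t]/(t^n)$ (not merely over $\Lambda_{i,1}/\mathrm{soc}(\Lambda_{i,1})\otimes_k k[t]/(t^n)$), that it is free of the expected rank over $k[t]/(t^n)$, and that it reduces to $V$ after passing through the base-changed Morita equivalence; in particular one must verify that the relations of $\Lambda_{i,1}$ involving $\alpha$ and $\eta$ are still satisfied when the band parameter is deformed. If in addition one wanted a description of $f_V(t)$ more explicit than ``a certain power series'', one would have to extract it from the action of the class sum $t(C)$ on the relevant $FG$-modules, in the spirit of Section~\ref{ss:ordinary}.
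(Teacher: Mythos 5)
Your argument follows essentially the same route as the paper's proof: reduce to the indecomposable boundary module, get $R(G,V)\cong W[[t]]/(g)$ from $\mathrm{Ext}^1_{kG}(V,V)\cong\mathrm{Ext}^2_{kG}(V,V)\cong k$ (the latter via $\Omega^2(V)\cong V$), and force $g\in 2W[[t]]$ by showing $R(G,V)/2R(G,V)\cong k[[t]]$ through lifts over $k[t]/(t^n)$ for all $n$. Your lifts are in fact the \emph{same} modules as the paper's: the band module with parameter $\mu+t$ over $k[t]/(t^n)$ is, as a $kG$-module, exactly $M(\alpha\,C_{010},\mu,n)$, and the paper constructs the free $k[t]/(t^n)$-structure on it explicitly via the nilpotent endomorphism $\tau_{\lambda,n}$ coming from the short exact sequences $(\ref{eq:onetubes1})$--$(\ref{eq:onetubes2})$; this also disposes of your worry about the socle relations, since these are honest $\Lambda_{i,1}$-modules by construction. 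Minor differences: the paper handles $\lambda=0$ directly with the string modules $M(\beta^{-1}\gamma^{-1}(\alpha\beta^{-1}\gamma^{-1})^{n-1})$ rather than passing to $\Omega(V)$, and for $i=1$ it transfers the mod-$2$ computation through the stable equivalence of Morita type with $\Lambda_{2,1}$ rather than redoing the band construction; both choices are equivalent to yours. The one step I would object to is your final claim that $f_V\neq 0$ via Jordan--Zassenhaus: that theorem is a global statement about orders over rings of algebraic integers, and over the complete discrete valuation ring $W$ with algebraically closed residue field the number of isomorphism classes of $WG$-lattices spanning a fixed $FG$-module need not be finite, so the finiteness of lifts of $V$ over $W$ does not follow as stated. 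Fortunately this step is superfluous: the proposition (and the paper's proof) only assert $R(G,V)\cong W[[t]]/(2f_V(t))$ for \emph{some} $f_V\in W[[t]]$, with $f_V=0$ not excluded, so you should simply drop that paragraph or supply a genuinely local finiteness argument.
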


\begin{proof}
Suppose first that $B$ is Morita equivalent to $\Lambda_{2,1}$. For $\lambda\in k$, let
$V_\lambda$ be the $B$-module which under the Morita equivalence corresponds to
the $\Lambda_{2,1}$-module $S^{(\lambda)}_{010}$ from Definition \ref{def:modules}.
We first prove that $\overline{R}_\lambda=
R(G,V_\lambda)/2 R(G,V_\lambda)$ is isomorphic to $k[[t]]$. Since 
$\mathrm{Ext}^1_{kG}(V_\lambda,V_\lambda)\cong k$ by Proposition \ref{prop:stablend}(iv), it follows that 
$\overline{R}_\lambda$ is isomorphic to a quotient algebra of $k[[t]]$. For $n\ge 1$ and $\lambda\in k$,
define $S^{(\lambda,n)}_{010}=M(\alpha\beta^{-1}\gamma^{-1},\lambda,n)$ for $\lambda\ne 0$ and
$S^{(0,n)}_{010}=M(\beta^{-1}\gamma^{-1}(\alpha\beta^{-1}\gamma^{-1})^{n-1})$, and let $V_{\lambda,n}$ be the 
$B$-module corresponding to the $\Lambda_{2,1}$-module $S^{(\lambda,n)}_{010}$.
Using the description of $S^{(\lambda,n)}_{010}$ via canonical bases from Definitions
\ref{def:strings} and \ref{def:bands} together with the Morita equivalence between $B$ and
$\Lambda_{2,1}$, we can use induction to establish the following for all $n\ge 2$ and $\lambda\in k$.
There are non-split short exact sequences of $B$-modules of the form
\begin{eqnarray}
\label{eq:onetubes1}
&0 \to V_\lambda \xrightarrow{\iota_{1,n}} V_{\lambda,n} \xrightarrow{\pi_{n,n-1}} V_{\lambda,n-1} \to 
0,& \\
\label{eq:onetubes2}
&0 \to V_{\lambda,n-1} \xrightarrow{\iota_{n-1,n}} V_{\lambda,n} \xrightarrow{\pi_{n,1}} V_\lambda \to 
0,&
\end{eqnarray}
such that $\pi_{n,1} = \pi_{2,1}\circ\cdots\circ\pi_{n,n-1}$ and 
$\iota_{1,n}=\iota_{n-1,n}\circ\cdots\circ\iota_{1,2}$. Moreover, if 
$\tau_{\lambda,n}=\iota_{n-1,n}\circ \pi_{n,n-1}$, then for all $1\le \ell\le n-1$, 
$\left(\tau_{\lambda,n}\right)^\ell(V_{\lambda,n})=\iota_{n-\ell,n}(V_{\lambda,n-\ell})$, where
$\iota_{n-\ell,n} = \iota_{n-1,n}\circ\cdots\circ\iota_{n-\ell,n-\ell+1}$, and 
$\left(\tau_{\lambda,n}\right)^n(V_{\lambda,n})=0$. It follows that $\tau_{\lambda,n}$ defines 
a $k[t]/(t^n)$-module structure on $V_{\lambda,n}$ such that the submodules
$t^\ell\cdot V_{\lambda,n}$, $0\le \ell\le n$, form  a filtration of $V_{\lambda,n}$
of $k[t]/(t^n)$-modules whose successive quotients are isomorphic to $V_\lambda$.
We conclude that $V_{\lambda,n}$ is a free $k[t]/(t^n)$-module.
Since $V_{\lambda,n}/t\cdot V_{\lambda,n}\cong V_\lambda$ as $B$-modules, 
it follows that $V_{\lambda,n}$
defines a lift of $V_\lambda$ over $k[t]/(t^n)$. Hence there exists a unique $k$-algebra homomorphism
$\sigma_{\lambda,n}:\overline{R}_\lambda\to k[t]/(t^n)$ corresponding to this lift. Since $V_{\lambda,n}$
is an indecomposable $kG$-module, it follows that $\sigma_{\lambda,n}$ is surjective
for all $n\ge 1$. But this implies that $\overline{R}_\lambda\cong k[[t]]$. 
Since $\Omega^2(V_\lambda)\cong V_\lambda$, we have
$\mathrm{Ext}^2_{kG}(V_\lambda,V_\lambda)\cong k$. By \cite[Sect. 1.6]{maz1}, this means
that $R(G,V_\lambda)$ is isomorphic to a quotient algebra of $W[[t]]$ by an ideal generated
by a single element. In other words, $R(G,V_\lambda)\cong W[[t]]/(h_\lambda(t))$ for some 
$h_\lambda(t)\in W[[t]]$. Since $R(G,V_\lambda)/2 R(G,V_\lambda)\cong k[[t]]$, all coefficients 
of $h_\lambda(t)$ must be divisible by $2$. This proves Proposition \ref{prop:onetubes} when $i=2$.

Suppose now that $B$ is Morita equivalent to $\Lambda_{1,1}$. For $\lambda\in k$, let
$V_\lambda$ be the $B$-module which under the Morita equivalence corresponds to
the $\Lambda_{1,1}$-module $S^{(\lambda)}_{010}$ from Definition \ref{def:modules}.
Since by \cite[Prop. 3.1]{holm} and \cite[Cor. 5.5]{rickard1}, 
there is a stable equivalence of Morita type between $\Lambda_{1,1}$ and 
$\Lambda_{2,1}$, it follows by \cite[Lemma 2.2.3]{3sim} that 
$R(G,V_\lambda)/2R(G,V_\lambda)\cong k[[t]]$. 
Since $\mathrm{Ext}^2_{kG}(V_\lambda,V_\lambda)\cong k$, we can argue as in the case
when $i=2$ that $R(G,V_\lambda)\cong W[[t]]/(2\, f_\lambda(t))$ for a certain $f_\lambda(t)
\in W[[t]]$. This proves Proposition \ref{prop:onetubes} when $i=1$.
\end{proof}


\section{Background: Representation theory of the basic algebras from Section $\ref{ss:basicalgebras}$}
\label{s:stringband}

Let $i\in\{1,2\}$, let $c\in\{0,1\}$ and let $d\ge 3$.
In this section, we give a brief introduction into the representation theory of the basic
$k$-algebras $\Lambda_{i,c}$ from Section  \ref{ss:basicalgebras} (see Figure \ref{fig:basic}).
It follows from the definition of string algebras in \cite[Sect. 3]{buri} that 
$\Lambda_{i,c}/\mathrm{soc}(\Lambda_{i,c})$ is a string algebra.
Therefore, one can see as in \cite[Sect. I.8.11]{erd} that
the isomorphism classes of all non-projective indecomposable 
$\Lambda_{i,c}$-modules are given by string and band modules as defined in \cite[Sect. 3]{buri}.
Define
\begin{eqnarray}
\label{eq:j1}
J_1&=&\{\beta\gamma, \alpha^2,
(\gamma\beta\alpha)^{2^{d-2}},(\alpha\gamma\beta)^{2^{d-2}},(\beta\alpha\gamma)^{2^{d-2}}\}
\subset kQ_1,\\
\label{eq:j2}
J_2&=&\{\alpha^2,\eta\beta,\gamma\eta,\beta\gamma,
\gamma\beta\alpha,\alpha\gamma\beta,\beta\alpha\gamma,\eta^{2^{d-2}}\}\subset kQ_2.
\end{eqnarray}
Then $\Lambda_{i,c}/\mathrm{soc}(\Lambda_{i,c})=
kQ_i/\langle J_i\rangle$.


\subsection{String and band modules}
\label{ss:strings}

For each arrow $\zeta$ in $Q_i$, we define a formal inverse
$\zeta^{-1}$ with starting point $s(\zeta^{-1})=e(\zeta)$ and end point $e(\zeta^{-1})=
s(\zeta)$.
A word relative to the quiver $Q_i$ is a sequence $w=w_1\cdots w_n$, where each $w_j$ is either an 
arrow or a formal inverse in $Q_i$ such that $s(w_j)=e(w_{j+1})$ for $1\leq j \leq n-1$. Define 
$s(w)=s(w_n)$, $e(w)=e(w_1)$ and 
$w^{-1}=w_n^{-1}\cdots w_1^{-1}$. There are also empty words $1_0$ and $1_1$ of length $0$ with 
$e(1_0)=0=s(1_0)$, $e(1_1)=1=s(1_1)$ and $(1_0)^{-1}=1_0$, $(1_1)^{-1}=1_1$. 
Denote the set of all words relative to $Q_i$ by $\mathcal{W}_i$, and the set of all 
non-empty words $w$ with $e(w)=s(w)$ by ${\mathcal{W}}_i^r$. 

\begin{dfn}
\label{def:strings}
Let $\sim_s$ be the equivalence relation on $\mathcal{W}_i$ with $w\sim_s w'$ if and only if $w=w'$ or 
$w^{-1}=w'$. Then a \emph{string} for $\Lambda_{i,c}$ is a representative $w\in\mathcal{W}_i$ of 
an equivalence class under 
$\sim_s$ with the following property: Either $w=1_u$ for $u\in\{0,1\}$, or $w=w_1\cdots w_n$ where 
$w_j\neq w_{j+1}^{-1}$ for $1\leq j\leq n-1$ and no subword of $w$ or its formal inverse belongs to 
$J_i$.

Let $C=w_1\cdots w_n$ be such a string of length $n\ge 1$. Then there exists an
indecomposable $\Lambda_{i,c}$-module $M(C)$,
called the \emph{string module} $M(C)$ corresponding to the string $C$, which can be described as follows.
There is a $k$-basis $\{z_0,z_1,\ldots, z_n\}$ of $M(C)$ such that the action of 
$\Lambda_{i,c}$ on $M(C)$ 
is given by the following representation $\varphi_C:\Lambda_{i,c}\to\mathrm{Mat}(n+1,k)$. 
Let $v(j)=e(w_{j+1})$ for $0\leq j\leq n-1$ and $v(n)=s(w_n)$. Then for each vertex $u\in\{0,1\}$, 
for each arrow $\zeta$ in $Q_i$ and for each $0\le j\le n$,
$$\varphi_C(u)(z_j) = \left\{ \begin{array}{c@{\quad,\quad}l}
z_j & \mbox{if $v(j)=u$}\\ 0 & \mbox{else} \end{array} \right\}\; \mbox{ and } \;
\varphi_C(\zeta)(z_j) = \left\{ \begin{array}{c@{\quad,\quad}l}
z_{j-1} & \mbox{if $w_j=\zeta$}\\ z_{j+1} & \mbox{if $w_{j+1}=\zeta^{-1}$}\\
0 & \mbox{else}
\end{array} \right\} .$$
We call $\varphi_C$ the canonical representation and $\{z_0,z_1,\ldots,z_n\}$ a \emph{canonical 
$k$-basis} for $M(C)$ relative to the representative $C$. Note that $M(C)\cong M(C^{-1})$. 

The string modules for the empty words are isomorphic to the simple $\Lambda_{i,c}$-modules,
namely $M(1_0)\cong S_0$ and $M(1_1)\cong S_1$.
\end{dfn}

\begin{dfn}
\label{def:bands}
Let $w=w_1\cdots w_n\in {\mathcal{W}}_i^r$. Then, for $0\leq j\leq n-1$, the $j$-th rotation of $w$ is 
defined to be the word $\rho_j(w)=w_{j+1}\cdots w_n w_1 \cdots w_j$. Let $\sim_r$ be the 
equivalence relation on ${\mathcal{W}}_i^r$ such that
$w\sim_r w'$ if and only if $w=\rho_j(w')$ for some $j$ or $w^{-1}=\rho_j(w')$ for some $j$. 
Then a \emph{band} for $\Lambda_{i,c}$ is a representative $w\in {\mathcal{W}}_i^r$ of 
an equivalence class under $\sim_r$ with the following property: 
$w=w_1\cdots w_n$, $n\ge 1$, with $w_j\neq w_{j+1}^{-1}$ and $w_n\neq w_1^{-1}$, such that 
$w$ is not a power of a smaller word, and, for all positive integers $m$, no subword of $w^m$ 
or its formal inverse belongs to $J_i$.

Let $B=w_1\cdots w_n$ be such a band of length $n$. Then for each integer $m>0$ and each 
$\lambda\in k^*$ there exists an indecomposable $\Lambda_{i,c}$-module $M(B,\lambda,m)$ which is
called the \emph{band module} corresponding to the band $B$, $\lambda$ and $m$, which can be described
as follows. There is a $k$-basis 
$$\{z_{0,1},z_{0,2},\ldots, z_{0,m},z_{1,1},\ldots,z_{1,m},\ldots, z_{n-1,1},\ldots,z_{n-1,m}\}$$
of $M(B,\lambda,m)$ such that the action of $\Lambda_{i,c}$ on $M(B,\lambda,m)$ 
is given by the following representation $\varphi_{B,\lambda,m}:\Lambda_{i,c}\to\mathrm{Mat}(n\cdot m,k)$. 
Let $v(j)=e(w_{j+1})$ for $0\leq j\leq n-1$. Then for each vertex $u\in\{0,1\}$, for 
each arrow $\zeta$ in $Q_i$ and for all $0\le j\le n-1$, $1\le j' \le m$, we have
\begin{eqnarray*}
\varphi_{B,\lambda,m}(u)(z_{j,j'}) &=& \left\{ \begin{array}{c@{\quad,\quad}l}
z_{j,j'} & \mbox{if $v(j)=u$}\\ 0 & \mbox{else} \end{array} \right\}\; \mbox{ and } \;\\[1ex]
\varphi_{B,\lambda,m}(\zeta)(z_{j,j'})& =& \left\{ \begin{array}{c@{\quad,\quad}l}
\lambda\, z_{0,j'}+z_{0,j'+1} & \mbox{if $w_j=\zeta$ and $j= 1$}\\
z_{j-1,j'} & \mbox{if $w_j=\zeta$ and $j\neq 1$}\\ 
\lambda^{-1} z_{1,j'}+z_{1,j'+1}&  \mbox{if $w_{j+1}=\zeta^{-1}$ and $j=0$}\\
z_{j+1,j'} & \mbox{if $w_{j+1}=\zeta^{-1}$ and $j\neq 0$}\\
0 & \mbox{else}
\end{array} \right\} 
\end{eqnarray*}
where $z_{0,m+1}=0=z_{1,m+1}$ and $z_{n,j'}=z_{0,j'}$ for all $j'$.
We  call $\varphi_{B,\lambda,m}$ the canonical representation and 
$\{z_{0,1},z_{0,2},\ldots, z_{0,m},z_{1,1},\ldots,z_{1,m},\ldots, z_{n-1,1},\ldots,z_{n-1,m}\}$ 
a \emph{canonical $k$-basis} for $M(B,\lambda,m)$ relative to the representative $B$.
Note that we have for all $0\le j\le n-1$,
$$M(B,\lambda,m)\cong M(\rho_j(B),\lambda,m)\cong M(\rho_j(B)^{-1},\lambda^{-1},m).$$ 
\end{dfn}


\subsection{The stable Auslander-Reiten quiver}
\label{ss:stablearquiver}

Each component of the stable Auslander-Reiten quiver of $\Lambda_{i,c}$ consists either entirely of 
string modules or entirely of band modules. The band modules all lie in $1$-tubes. The components
consisting of string modules are one $1$-tube, one $3$-tube and infinitely many non-periodic 
components of type $\mathbb{Z}A_\infty^\infty$. 
The irreducible morphisms between string modules can be described using hooks and cohooks,
which are defined as follows. Let 
\begin{eqnarray}
\label{eq:m1}
 \mathcal{M}_1&=&\{\beta\alpha(\gamma\beta\alpha)^{2^{d-2}-1},
 \gamma\beta(\alpha\gamma\beta)^{2^{d-2}-1},
 \alpha\gamma(\beta\alpha\gamma)^{2^{d-2}-1},1_1\}
\subset kQ_1,\\
\label{eq:m2}
\mathcal{M}_2&=&\{\gamma\beta,\beta\alpha,\alpha\gamma,\eta^{2^{d-2}-1}\}\subset kQ_2.
\end{eqnarray}

\begin{dfn}
\label{def:arcomps}
Let $S$ be a string for $\Lambda_{i,c}$.
We say that $S$ starts on a peak (resp. starts in a deep) if for each arrow $\zeta$ in $Q_i$,
$S\zeta$ (resp. $S\zeta^{-1}$) is not a string.
Dually, we say that $S$ ends on a peak (resp. ends in a deep) if for each arrow $\xi$ in $Q_i$,
$\xi^{-1}S$ (resp. $\xi S$) is not a string.

If $S$ does not start on a peak (resp. does not start in a deep), there is a unique arrow $\zeta$ 
in $Q_i$ and a unique $M\in\mathcal{M}_i$ such that $S_h=S\zeta M^{-1}$ 
(resp. $S_c=S\zeta^{-1}M$) is a string. 
We say $S_h$ (resp. $S_c$) is obtained from $S$ by adding a \emph{hook} (resp. a \emph{cohook}) on 
the right side.

Dually, if $S$ does not end on a peak (resp. does not end in a deep), there is a unique arrow $\xi$ 
in $Q_i$ and a unique $N\in\mathcal{M}_i$ such that ${}_hS=N\xi^{-1}S$ 
(resp. ${}_cS=N^{-1}\xi S$) is a string. 
We say ${}_hS$ (resp. ${}_cS$) is obtained from $S$ by adding a \emph{hook} (resp. a \emph{cohook}) 
on the left side.
\end{dfn}

All irreducible morphisms between string modules are either canonical injections
$M(S)\to M(S_h)$, $M(S)\to M({}_hS)$,
or canonical projections
$M(S_c)\to M(S)$, $M({}_cS)\to M(S)$.


\subsection{Homomorphisms between string and band modules}
\label{ss:homs}

In \cite{krau}, all homomorphisms between string and band modules were determined.
The following remark describes the homomorphisms between string modules using the
canonical bases defined in Definition \ref{def:strings}.

\begin{rem}
\label{rem:stringhoms}
Let $S$ and $T$ be strings for $\Lambda_{i,c}$. Let
$M(S)$ (resp. $M(T)$) be the corresponding string module with a
canonical $k$-basis $\{x_u\}_{u=0}^m$ (resp. $\{y_v\}_{v=0}^n$) 
relative to the representative $S$ (resp. $T$).
Suppose $C$ is a string for $\Lambda_{i,c}$ such that
\begin{enumerate}
\item[(i)] $S\sim_s S'CS''$ with ($S'$ of length $0$ or $S'=\hat{S}'\zeta_1$) and ($S''$ of length $0$ or 
$S''=\zeta_2^{-1} \hat{S}''$), where $S',\hat{S}',S'',\hat{S}''$ are strings relative to $Q_i$ and $J_i$
and $\zeta_1,\zeta_2$ are arrows in $Q_i$; 
and
\item[(ii)] $T\sim_sT'CT''$ with ($T'$ of length $0$ or $T'=\hat{T}'\xi_1^{-1} $) and ($T''$ of length $0$ or 
$T''=\xi_2 \hat{T}''$), where $T',\hat{T}',T'',\hat{T}''$ are strings relative to $Q_i$ and $J_i$
and $\xi_1,\xi_2$ are arrows in $Q_i$.
\end{enumerate} 
Then there exists a non-zero $\Lambda_{i,c}$-module homomorphism $\sigma_C:M(S)\to M(T)$ which factors through $M(C)$ and which sends 
each element of $\{x_u\}_{u=0}^m$ either to zero or to an element of $\{y_v\}_{v=0}^n$,
according to the relative position of $C$ in $S$ and $T$, respectively.
If e.g. $S=s_1s_2\cdots s_m$, $T=t_1t_2\cdots t_n$, and $C=s_{a+1}s_{a+2}\cdots s_{a+\ell} = 
t_{b+\ell}^{-1}t_{b+\ell-1}^{-1}\cdots t_{b+1}^{-1}$,
then 
$$\sigma_C(x_{a+t})=y_{b+\ell-t} \mbox{ for } 0\le t\le \ell, \mbox{ and } \sigma_C(x_u)=0\mbox{ for all 
other $u$.}$$
Note that there may be several choices of $S',S''$ (resp. $T',T''$) in (i) (resp. (ii)). In other words, there 
may be several $k$-linearly independent homomorphisms factoring through $M(C)$. By 
\cite{krau}, 
every $\Lambda_{i,c}$-module homomorphism $\sigma:M(S)\to M(T)$ is a unique
$k$-linear combination of 
homomorphisms which factor through string modules corresponding to strings $C$ satisfying 
(i) and (ii). 
\end{rem}

\begin{rem}
\label{rem:bandhoms}
Suppose $B$ is a band for $\Lambda_{i,c}$, $\lambda\in k^*$ and $m$ is a positive integer.
It follows from \cite{krau} that if $m\ge 2$, then the stable endomorphism ring
$\underline{\mathrm{End}}_{\Lambda_{i,c}}(M(B,\lambda,m))$ has $k$-dimension at least $2$. Moreover, if $m=1$, then the endomorphism ring of 
$M(B,\lambda,1)$ can be described in a similar way as in Remark \ref{rem:stringhoms}, using the
canonical bases defined in Definition \ref{def:bands}.
\end{rem}


\end{document}